\DeclareFontFamily{OML}{rsfs}{\skewchar\font'177}
\DeclareFontShape{OML}{rsfs}{m}{n}{ <5> <6> rsfs5 <7> <8> <9>
rsfs7 <10> <10.95> <12> <14.4> <17.28> <20.74> <24.88> rsfs10 }{}
\DeclareMathAlphabet{\mathfs}{OML}{rsfs}{m}{n}
\newcommand{\BB}{{\mathbb{B}}}
\newcommand{\BE}{{\mathbb{E}}}
\newcommand{\BH}{{\mathbb{H}}}
\newcommand{\BN}{{\mathbb{N}}}
\newcommand{\BO}{{\mathbb{O}}}
\newcommand{\BP}{{\mathbb{P}}}
\newcommand{\BR}{{\mathbb{R}}}
\newcommand{\BS}{{\mathbb{S}}}
\newcommand{\CA}{{\mathcal{A}}}
\newcommand{\CB}{{\mathcal{B}}}
\newcommand{\CC}{{\mathcal{C}}}
\renewcommand{\CD}{{\mathcal{D}}}
\newcommand{\CE}{{\mathcal{E}}}
\newcommand{\CH}{{\mathcal{H}}}
\newcommand{\CV}{{\mathcal{V}}}
\newcommand{\CX}{{\mathcal{X}}}
\newcommand{\cc}{{\mathfrak c}}
\newcommand{\diam}{\textrm{diam}}
\newcommand{\scap}{\operatorname{cap}}
\newcommand{\dint}{\textrm{d}}
\newcommand{\arccot}{\operatorname{arccot}}
\newcommand{\Cr}{\operatorname{Cr}}
\newcommand{\Sep}{\operatorname{Sep}}
\newtheorem{theorem}{Theorem}[section]
\newtheorem{proposition}[theorem]{Proposition}
\newtheorem{lemma}[theorem]{Lemma}
\newtheorem{corollary}[theorem]{Corollary}
\newtheorem{remark}[theorem]{Remark}
\let\@fnsymbol\@alph
\begin{document}
\numberwithin{equation}{section} \numberwithin{figure}{section}

\title{\bfseries Percolation of fat Poisson cylinders\\ in hyperbolic space}

\author{Carina Betken\footnotemark[1]\;, Erik Broman\footnotemark[2]\;, Anna Gusakova\footnotemark[3]\;, Christoph Th\"ale\footnotemark[4]}

\renewcommand{\thefootnote}{\fnsymbol{footnote}}

\footnotetext[1]{%
	Ruhr University Bochum and Hochschule für Gesundheit, Germany. Email: carina.betken@rub.de
}

\footnotetext[2]{%
Chalmers University of Technology and Gothenburg University, Sweden. \\
 Email: broman@chalmers.se
}

\footnotetext[3]{
	 University of M{\"u}nster, Germany. Email: gusakova@uni-muenster.de
}

\footnotetext[4]{%
	Ruhr University Bochum, Germany. Email: christoph.thaele@rub.de
}

\date{}
\maketitle
\begin{abstract}
In this paper we study Poisson processes of so-called "fat" cylinders
in hyperbolic space. As our main result we show that 
this model undergoes a percolation phase transition. We prove this 
by establishing a novel link between the fat Poisson cylinder 
process and semi-scale invariant random fractal models on 
the unit sphere and in $\BR^d.$ As a secondary result, we show that 
the semi-scale invariant fractal ball model in $\BR^d$ has 
a non-empty sheet phase.
\end{abstract}

\section{Introduction}

{Continuum percolation is the study of phase transitions and connectivity properties in random geometric structures defined by random points placed continuously in space and which are surrounded by random shapes. A typical example is the Poisson Boolean model, in which points are distributed according to a stationary Poisson point process, and each point is the centre of a ball or another geometric object that may overlap with others, resulting in the formation of random clusters. In Euclidean space, the analysis of such models has a long history, with fundamental issues centring around the existence of a critical intensity beyond which an infinite connected component emerges. The threshold phenomena in Euclidean continuum percolation have been studied extensively, and numerous results are known about the critical intensity, scaling behaviour near the threshold, and the geometry of the resulting infinite cluster. We refer to the monograph \cite{MeesterRoy} and mention exemplarily the articles \cite{AhlbergTassioTeiceira,DuminilRaouflTassio,GuereMarchand18,GuereLabey}. When extending these ideas and concepts to hyperbolic space, the geometry and curvature properties significantly influence percolation phenomena.  Unlike Euclidean space, hyperbolic space exhibits exponential volume growth, and this leads to different behaviours for the appearance of infinite clusters. The richer geometric environment of hyperbolic spaces affects critical thresholds and the geometry of percolation clusters in essential and sometimes surprising ways. We direct the reader to the papers \cite{BenjaminiSchramm,BenjaminiVisibility,Tykesson07} for details and further references.

The purpose of the present paper is to introduce the ``fat'' Poisson cylinder 
process and to investigate connectivity properties of this random set model in hyperbolic space.
In \cite{BromanTykessonCylHyperbolic} percolation properties of 
a related model -- consisting of what could 
now be referred to as ``thin'' Poisson cylinders -- were studied. What distinguishes such cylinder models from the more traditional Boolean model is that the geometric structure of cylinders induces long-range dependencies. Unlike the Boolean model, which typically considers balls or other bounded shapes whose overlap patterns are fairly localized, cylinder models are based on geometric shapes that extend over unbounded distances. This implies correlations between distant regions of space, as the presence of a cylinder in one region can influence connectivity and clustering properties far away. Although our primary result, Theorem \ref{thm:ConnectivityPhaseTransition},
which is about the connectivity properties of the fat Poisson cylinder model,
is similar to the one established in \cite{BromanTykessonCylHyperbolic} for the thin Poisson cylinder model, the methods and tools used here are fundamentally different. Our paper establishes a previously unexplored connection between the fat Poisson cylinder process and semi-scale invariant random fractals, and we leverage this relationship to prove our main result. In fact, the connection itself may be as significant as the primary findings, offering a new perspective that has the potential to be useful for other purposes as well. As an additional by-product we establish the existence of what one might call a ``sheet'' phase for the fractal model we consider.}

For $d\geq 1$ we denote by $\BH^d$ the $d$-dimensional hyperbolic space, which is the simply connected Riemannian manifold of constant sectional curvature $-1$, equipped with the hyperbolic metric $d_h$. Let $A_h(d,1)$ denote the space of all bi-infinite totally geodesic subspaces of dimension one, which we call hyperbolic lines in what follows, and let $L$ denote an element in $A_h(d,1).$ The space $A_h(d,1)$ carries a Haar measure $\mu^h$ which is invariant under all isometries of $\BH^d$. We refer to Section \ref{sec:HyperbolicGeometry} for background material on hyperbolic geometry.  Each hyperbolic line $L\in A_h(d,1)$ is parametrized by the totally geodesic hypersurface $H$ orthogonal to $L$ and passing through a fixed point $o\in\BH^d$, and a point $x\in H$. Using this parametrization, for a fixed hyperbolic line $L=L(H,x)\in A_h(d,1)$ we consider the 
set $\hat A_L$ of all hyperbolic lines orthogonal to $H$ and whose 
intersection with $H$ has hyperbolic distance at most 
$1$ from $x$. That is,
\begin{equation}\label{eq:DefHatAL}
\hat A_L := \{L'\in A_h(d,1) : L'\perp H,d_h(L'\cap H,x)< 1\}.
\end{equation} 
The (open) 
cylinder $\cc_L$ associated with $L$ is then defined to be the union 
of all elements in  $\hat A_L$
\begin{equation*}
\cc_L := \bigcup_{L'\in\hat A_L}L',
\end{equation*}
see Figure \ref{fig:FatCyl}. 

\begin{figure}[t]
\centering	
\includegraphics[width=0.5\columnwidth]{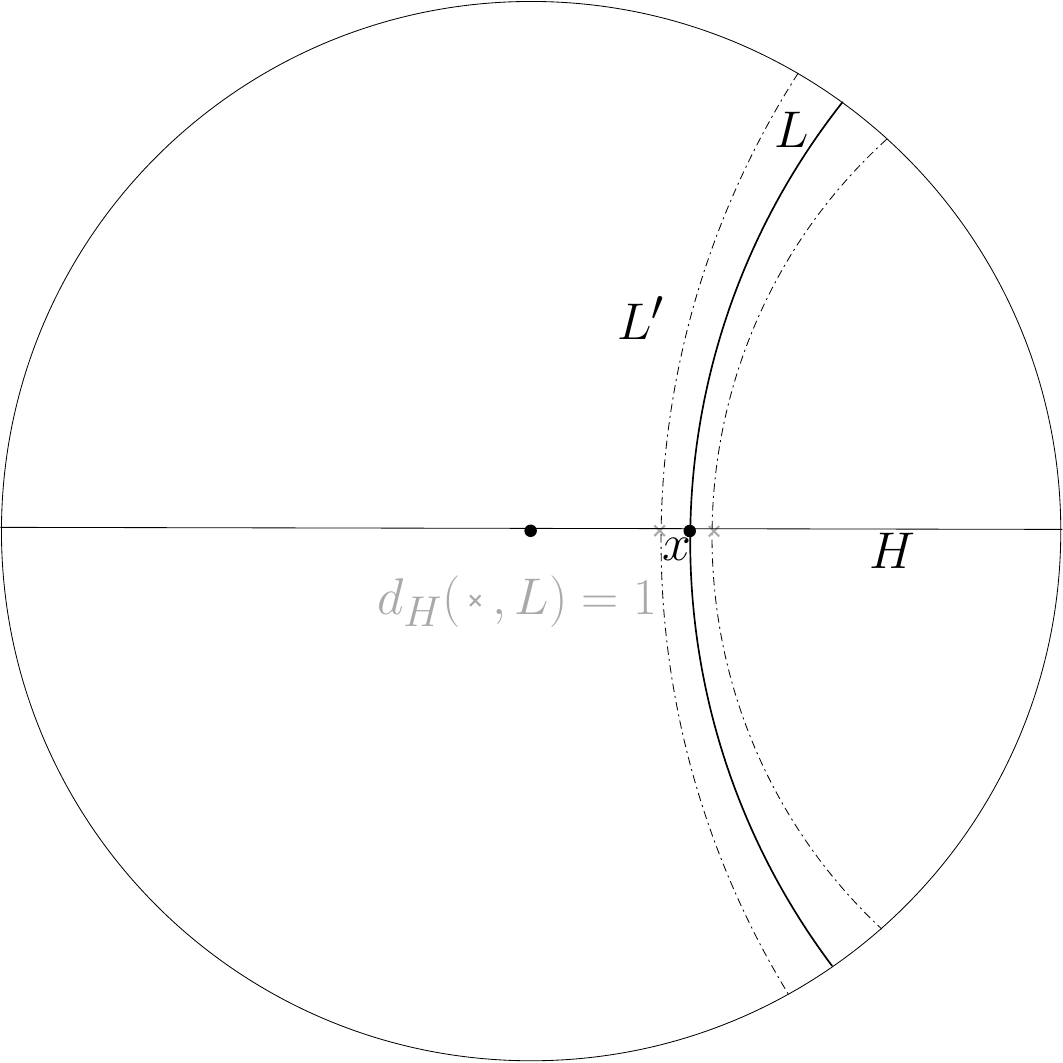}
\caption{\small Construction of a (fat) hyperbolic cylinder in the Poincar\'e ball model of $\mathbb{H}^d$.}
\label{fig:FatCyl}
\end{figure}

For $\lambda>0$ let $\Psi(\lambda \mu^h)$ be a Poisson  
process on the space $A_h(d,1)$ with intensity measure $\lambda\mu^h$ 
and consider the random set
\begin{equation} \label{eqn:Cdef}
\CC(\lambda\mu^h) := \bigcup_{L\in\Psi(\lambda \mu^h)}\cc_L,
\end{equation}
consisting of all cylinders associated with the hyperbolic lines
from $\Psi(\lambda \mu^h)$. In this paper we deal with
the connectivity properties of the set $\CC(\lambda\mu^h)$ 
for which we prove the following result. 

\begin{theorem}\label{thm:ConnectivityPhaseTransition}
There exists $\lambda_c\in(0,\infty)$ such that the following holds: 
\begin{itemize}
\item[(i)] if $\lambda<\lambda_c$ then $\CC(\lambda\mu^h)$ is not connected 
with positive probability;
\item[(ii)] if $\lambda>\lambda_c$ then 
$\CC(\lambda\mu^h)$ is connected with probability one.
\end{itemize}
\end{theorem}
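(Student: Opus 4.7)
My plan is to reduce the connectivity of $\CC(\lambda\mu^h)$ to a covering problem on the ideal boundary $\partial\BH^d\cong\BS^{d-1}$, viewed as the visual sphere at a fixed base point $o\in\BH^d$. Each hyperbolic line $L$ projects, via $\cc_L$, to a pair of antipodal spherical caps $C_L^{\pm}$ centred at the two ideal endpoints of $L$, with angular radius determined (in an explicit, isometry-invariant way) by the hyperbolic distance from $o$ to $L$. The induced random set $\CS_\lambda:=\bigcup_{L\in\Psi(\lambda\mu^h)}(C_L^{-}\cup C_L^{+})\subseteq\BS^{d-1}$ inherits from the isometry invariance of $\mu^h$ the structure of a semi-scale invariant random covering of the sphere. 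The key auxiliary result, to be proved independently in the fractal part of the paper, is that this covering exhibits a phase transition at some critical intensity $\lambda^{\ast}\in(0,\infty)$: for $\lambda>\lambda^{\ast}$ one has $\CS_\lambda=\BS^{d-1}$ almost surely, while for $\lambda<\lambda^{\ast}$ the complement is a non-empty random fractal with positive probability (the ``sheet'' phase).

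For the lower bound $\lambda_c>0$, I would take $\lambda<\lambda^{\ast}$. With positive probability there exists $\xi\in\BS^{d-1}\setminus\CS_\lambda$; by construction no cylinder $\cc_L$ has $\xi$ in its shadow, so a geodesic ray from $o$ towards $\xi$ escapes to infinity while keeping arbitrarily far (in the hyperbolic metric) from every cylinder in $\Psi(\lambda\mu^h)$. A totally geodesic hyperplane $H_\xi$ close to $\xi$ and tangent to the ray separates $\BH^d$ into two half-spaces, one of which contains a neighbourhood of $\xi$ disjoint from $\CC(\lambda\mu^h)$, while the other still contains cylinders almost surely. This yields a disconnection of $\CC(\lambda\mu^h)$ with positive probability.

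For the upper bound $\lambda_c<\infty$, I would take $\lambda>\lambda^{\ast}$. Given any two points $x,y\in\BH^d$ and a compact geodesic segment $\gamma$ joining them, for each $z\in\gamma$ the visual sphere at $z$ carries, by isometry invariance of $\mu^h$, an identically distributed copy of $\CS_\lambda$ and is therefore covered almost surely. A Fubini-type argument combined with the compactness of $\gamma$ then extracts a finite chain $\cc_{L_1},\ldots,\cc_{L_k}$ of pairwise intersecting cylinders with $x\in\cc_{L_1}$ and $y\in\cc_{L_k}$, so that $x$ and $y$ lie in the same connected component. Ranging over a countable dense family of pairs $(x,y)$ and invoking ergodicity of $\Psi(\lambda\mu^h)$ under the hyperbolic isometry group upgrades this to almost-sure connectedness of $\CC(\lambda\mu^h)$.

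The \textbf{main obstacle} lies in the upper bound: translating the statement that every individual visual sphere is almost surely covered into the existence of a simultaneous connecting chain along every compact geodesic segment. Pointwise covering of $\BS^{d-1}$ does not by itself guarantee that two cylinders with overlapping shadows actually intersect in the bulk rather than sharing only an ideal direction, and the union bound over uncountably many base points must be handled via scale invariance and quantitative control on the distribution of cap sizes. It is precisely here that the structural properties of the semi-scale invariant fractal model, rather than just its covering phase, enter the percolation argument in an essential way.
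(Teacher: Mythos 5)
Your high-level strategy -- pass to the induced process of spherical caps on the ideal boundary, prove covering for large $\lambda$ and a fractal/sheet phenomenon for small $\lambda$ -- is exactly the paper's strategy. However, both halves of your argument have genuine gaps at the decisive steps.

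For the supercritical direction, the step you yourself flag as the ``main obstacle'' is not a technicality but the heart of the matter, and your proposal does not supply it. The paper's resolution is Lemma \ref{lm:coupling}: the endcap $\scap(z(L),\beta(\|x\|))$ is precisely the set of ideal endpoints $s_1(L')$ of the geodesics $L'\in\hat A_L$ making up the fat cylinder, so if two such caps overlap in an open set one finds a totally geodesic hypersurface $F$ with $\Pi_{\rm rad}(F)$ inside the overlap, and every point of $F$ lies on a geodesic of each cylinder; hence the cylinders genuinely intersect in the bulk. This is where ``fatness'' is used, and once it is in place one only needs covering of the visual sphere at the \emph{single} base point $o$ (plus a standard chaining argument for an open cover of a connected space and a Mecke-formula union bound over pairs of lines), rather than your Fubini argument over all base points $z\in\gamma$, which is both harder to make rigorous and unnecessary. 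You also omit the monotonicity in $\lambda$ of $\BP(\CC(\lambda\mu^h)\text{ connected})$, which is needed to convert ``connected for large $\lambda$, disconnected for small $\lambda$'' into a single threshold $\lambda_c$; the paper proves this via superposition together with the fact that any fixed line a.s.\ meets $\CC(\lambda\mu^h)$.

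For the subcritical direction your argument fails. Finding a single uncovered direction $\xi\in\BS^{d-1}\setminus\CS_\lambda$ only produces a region of $\BH^d$ near $\xi$ that is \emph{empty} of cylinders; it does not \emph{separate} $\CC(\lambda\mu^h)$ into two nonempty pieces. For $d\ge 3$ the complement of a point (or of a cap) in $\BS^{d-1}$ is connected, so the radial projections of all the cylinders can, and generically will, remain connected around $\xi$; likewise your hyperplane $H_\xi$ does not disconnect the union, since unboundedly many cylinders cross it. What is actually needed is that the complement of the \emph{enlarged} caps $\scap(\Pi_{\rm rad}(x),\gamma(\|x\|)+2\beta(\|x\|))$ (which contain the full radial projections of the cylinders, after excluding the finitely many lines near $o$) contains a topologically separating ``sheet'' on the sphere with positive probability. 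Establishing this is the bulk of the paper's work: one projects to $\BR^{d-1}$, compares with the semi-scale invariant fractal ball model, and invokes the sheet phase of Mandelbrot fractal percolation (Theorems \ref{thm:Cbounded} and \ref{thm:anncrossings}, via \cite{O96}). Your proposal names a ``sheet phase'' in the statement of the auxiliary result but then does not use it; the disconnection argument you actually give would prove nothing even granting that auxiliary result.
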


\begin{remark}\label{rem:1} We would like to stress at this point that the result of 
part (i) of Theorem \ref{thm:ConnectivityPhaseTransition} is best 
possible 
and cannot be improved to a probability one statement. A further 
discussion needs more background and we therefore postpone it to Section 
\ref{sec:inducedBool}.
\end{remark}

As mentioned before, a similar type of result was established in
\cite{BromanTykessonCylHyperbolic} for the different notion of a hyperbolic cylinder. More precisely, a cylinder corresponding to 
a hyperbolic line $L\in A_h(d,1)$ was defined as the set of all points 
in $\BH^d$ having hyperbolic distance at most $1$ from $L$. In 
particular, such a cylinder has a single endpoint at infinity,
while in the current paper, the cylinders have a non-trivial intersection with the Gromov (ideal) boundary of $\mathbb{H}^d$ (compare Figure \ref{fig:FatCyl} to Figure \ref{fig:ThinCyl},
and see further Lemma \ref{lm:scaps}). 
\begin{figure}[t]
\centering	
\includegraphics[width=0.5\columnwidth]{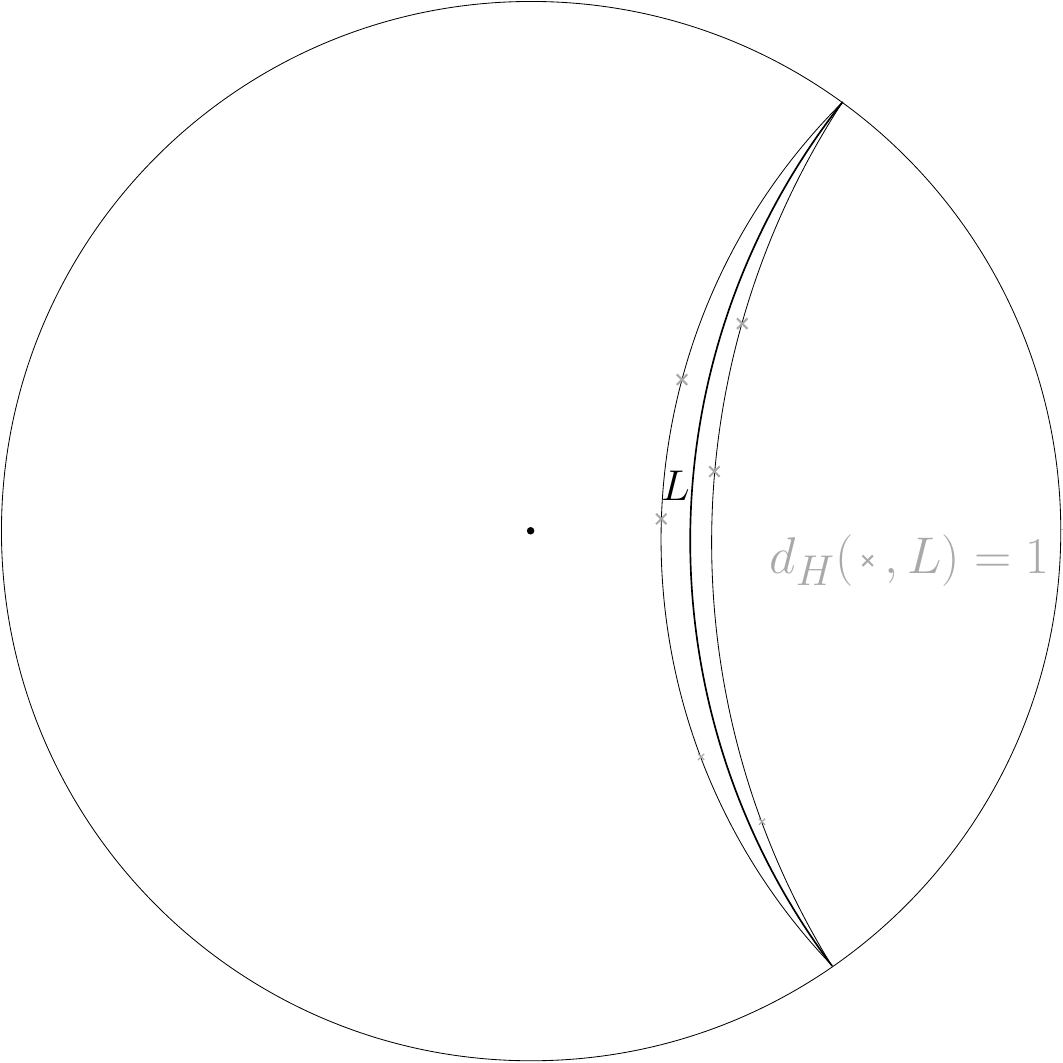}
\caption{\small Construction of a (thin) hyperbolic cylinder in the Poincar\'e ball model of $\mathbb{H}^d$.}
\label{fig:ThinCyl}
\end{figure}
This is why we casually refer to the cylinders in the current paper 
as ``fat'' while the ones in \cite{BromanTykessonCylHyperbolic} 
can be called ``thin''. From now on, whenever a cylinder is mentioned
it will be understood to be a fat cylinder. The comparison to \cite{BromanTykessonCylHyperbolic} also shows that 
the concept of a cylinder in Euclidean space has several different 
generalizations in the hyperbolic set-up. 

We note that while our main result, Theorem 
\ref{thm:ConnectivityPhaseTransition}, is the analogue of the main result 
in \cite{BromanTykessonCylHyperbolic}, the methods used in the present paper are 
completely different. Indeed, in \cite{BromanTykessonCylHyperbolic}
a coupling with an infinite offspring/infinite type branching process
was used in order to prove the result corresponding to $(i)$ of 
Theorem \ref{thm:ConnectivityPhaseTransition}, while the analogue
of $(ii)$ was proven by coupling the cylinder process to a 
hyperbolic ball process. In this paper, both parts, 
i.e. $(i)$ and 
$(ii)$, are instead proven by studying a semi-scale invariant process
on the boundary of the $d$-dimensional open Euclidean unit ball $\BB^d$ induced by the cylinders. Therefore, our
proofs here rely on results for 
the study of random fractals (see in particular \cite{Broman} 
and \cite{O96}). 

We also point out the important fact that part 
$(ii)$ of Theorem \ref{thm:ConnectivityPhaseTransition} can be alternatively derived
by coupling the process of "fat" and "thin" cylinders and applying the result from \cite{BromanTykessonCylHyperbolic}. 
However, we choose to go through 
a different route for two reasons. Firstly, our paper will be 
more self-contained. Secondly, one aim of the paper is to demonstrate 
the connection between cylinder processes in hyperbolic space and 
random fractal models on the boundary of $\BB^d$ and using results from 
\cite{BromanTykessonCylHyperbolic} will not establish such a connection. 
In addition, our proof of part $(ii)$ is relatively short, while
the proof of part $(i)$ will require the most of our 
efforts.

In our proof of part $(ii)$ of Theorem 
\ref{thm:ConnectivityPhaseTransition}, we will study the so-called
fractal ball model. As a by-product, we 
will prove a statement that may be of independent interest and
in order to state this secondary result, we will first have to  introduce the model. For simplicity, we do this  in the $d$-dimensional Euclidean space $\BR^d$, although it will later be applied to $\BR^{d-1}$.  In order to not make the introduction
too long, we will postpone further motivation and background until 
Section \ref{sec:Notation}. We consider here a
Poisson  process $\Psi(\lambda \mu^E)$ on 
$\BR^d \times (0,1]$ 
with intensity measure $\lambda \mu^E$ where
\begin{equation} \label{eqn:muEdef}
\mu^E(\dint (x,r))= \ell_{d}(\dint x)\times {\bf 1}(0<r\leq 1) 
r^{-(d+1)} \dint r,
\end{equation}
and where $\ell_d$ denotes Lebesgue measure in $\BR^d.$ The process $\Psi(\lambda \mu^E)$ gives 
rise to a semi-scale invariant collection of balls
(see Section \ref{sec:Notationfracball}).
Namely, to any pair $(x,r)\in \Psi(\lambda \mu^E)$ we associate the open Euclidean ball 
$B(x,r)$ of radius $r>0$ centered at $x$. Then, we let 
\begin{equation} \label{eqn:Cdeffracball}
\CC(\lambda \mu^E)=\bigcup_{(x,r)\in \Psi(\lambda\mu^E)} B(x,r),
\end{equation}
and define
\begin{equation} \label{eqn:lambdasheetdef}
\lambda_{\rm sheet}:=\sup\{\lambda>0: \BP(\CC(\lambda\mu^E)
\textrm{ contains only bounded components})=1\}.
\end{equation}
Observe, that if $\CC(\lambda\mu^E)$ contains only bounded components 
then every such component must be surrounded by a subset of the 
complement which we can think of as a ``sheet''. We also note that 
the probability in the definition of $\lambda_{\rm sheet}$ is either 0 
or 1, which follows by ergodicity. 
It is not apriori clear that $\lambda_{\rm sheet}>0,$ but our secondary 
result establishes this.
\begin{theorem} \label{thm:Cbounded}
We have that $\lambda_{\rm sheet}>0.$
\end{theorem}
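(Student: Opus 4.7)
The event $\{\CC(\lambda\mu^E) \text{ has only bounded components}\}$ is translation-invariant; by ergodicity of the underlying Poisson process it has probability $0$ or $1$, so it suffices to show that, for some small $\lambda > 0$, the connected component of $\CC(\lambda\mu^E)$ containing the origin is almost surely bounded. My plan is a multi-scale renormalization argument based on the semi-scale invariance encoded in $\mu^E$ via the rescaling $T_\alpha(x,r) = (\alpha x, \alpha r)$, under which the restriction of $\Psi(\lambda \mu^E)$ to radii $\leq \alpha$ is distributionally equivalent to the full rescaled process.

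For $k \geq 1$ set $A_k := B(0, 2^{k+1})\setminus B(0, 2^k)$, and let $\mathrm{NC}_k$ denote the event that $\CC(\lambda\mu^E)\cap A_k$ contains no connected path joining the inner and outer boundary spheres of $A_k$. The cluster of the origin is bounded as soon as $\mathrm{NC}_k$ occurs for some $k \geq 1$, so it suffices to prove $\BP(\mathrm{NC}_k \text{ i.o.})=1$. Two ingredients are needed:
\begin{enumerate}
\item[(a)] a uniform crossing bound $\BP(\mathrm{NC}_k) \geq \delta > 0$ for all $k \geq 1$, valid for $\lambda$ sufficiently small;
\item[(b)] approximate independence of $(\mathrm{NC}_{k_j})$ along a sparse subsequence $k_j$.
\end{enumerate}
Combined, (a) and (b) imply via the second Borel--Cantelli lemma that $\mathrm{NC}_{k_j}$ occurs for infinitely many $j$ almost surely.

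For (a), I decompose $\Psi(\lambda\mu^E)$ by radius into independent Poisson sub-processes $\Psi_n$ of balls with radius in $(2^{-n-1}, 2^{-n}]$, $n \ge 0$. Each $\Psi_n$ is a scale-$2^{-n}$ Boolean model whose covered volume fraction is $\asymp \lambda$ uniformly in $n$, which is precisely the content of semi-scale invariance. For $\lambda$ small each $\Psi_n$ is deeply subcritical, and a crossing of $A_k$ by chains from a single $\Psi_n$ requires $\gtrsim 2^{k+n}$ balls and is exponentially unlikely by classical subcritical Boolean estimates. The delicate point is to control crossing chains that \emph{mix} scales: the full $\CC$ covers $\BR^d$ with full Lebesgue measure, so it is not itself a subcritical Boolean model. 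I plan to handle this by an inductive argument on scales together with the semi-scale-invariance reduction to a canonical scale, in the spirit of \cite{Broman} and \cite{O96}. Since single balls cannot span $A_k$ once $k \ge 1$ (because radii are capped at $1$), this yields $\BP(\mathrm{NC}_k^c) \leq \varphi(\lambda)$ with $\varphi(\lambda) \to 0$ as $\lambda \to 0$, uniformly in $k$.

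For (b), take $k_j = j K$ with $K$ large. The event $\mathrm{NC}_{k_j}$ essentially depends only on balls intersecting a mild thickening of $A_{k_j}$; for balls of radius much smaller than $2^{k_j}$ these thickenings are disjoint across $j$, so the relevant portions of the sub-processes are independent, and a sprinkling argument dominates the residual dependence coming from balls of radius comparable to $1$. The main obstacle, in my view, is the rigorous execution of step (a) above, where the naive subcritical Boolean intuition fails and one must exploit the fractal structure of the vacant set $V := \BR^d \setminus \CC(\lambda\mu^E)$; this is precisely where tools from the random fractal literature play a decisive role.
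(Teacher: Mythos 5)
Your reduction (all components bounded follows from non-crossing of infinitely many well-separated annuli, plus independence across scales) is essentially the paper's: the paper fixes $\psi:=\BP(\Cr(\CA(1,3),\CC(\lambda\mu^E)))<1$, uses that the balls have radius at most $1$ to get \emph{exact} independence of the crossing events for the annuli $\CA(3^{2n},3^{2n+1})$, and uses semi-scale invariance to propagate the bound $\psi$ to all scales, so no Borel--Cantelli or sprinkling is needed. That part of your plan is sound, if slightly more elaborate than necessary.

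The genuine gap is your step (a), which you yourself flag as ``the main obstacle'' and then do not carry out: everything hinges on showing that a fixed annulus is \emph{not} crossed with positive probability, i.e.\ that the vacant set $\CV(\lambda\mu^E)$ contains a closed separating surface around the inner box. Your per-scale subcritical Boolean estimates do not add up to this: each $\Psi_n$ being subcritical says nothing about chains mixing scales, and since $\CC(\lambda\mu^E)$ has full Lebesgue measure there is no ``naive'' route. In dimension $d\ge 3$ one must actually produce a $(d-1)$-dimensional blocking sheet in a fractal set, and the paper's mechanism for this is concrete and is missing from your proposal: decompose $\Psi(\lambda\mu^E)$ into radius bands $(M^{-n},M^{-n+1}]$, declare a level-$n$ dyadic box of side $M^{-n}$ retained if it is untouched by balls of that band, observe that this defines a \emph{finitely dependent} fractal percolation whose retention probability tends to $1$ as $\lambda\to 0$, apply the Liggett--Schonmann--Stacey domination theorem to dominate from below a genuine i.i.d.\ Mandelbrot fractal percolation $\CD^M(p)$ with $p$ close to $1$, and then invoke Orzechowski's theorem \cite{O96} (plus FKG to glue $2d$ rectangle-separation events into an annulus separation) to get a sheet in $\CD^M(p)\subset\CV(\lambda\mu^E)$ with positive probability. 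Citing \cite{O96} ``in spirit'' is not a substitute for this chain of reductions; without the domination step in particular, the dependence between neighbouring boxes at the same scale blocks any direct appeal to the fractal percolation literature.
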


The remaining parts of this paper are structured as follows. In Section 
\ref{sec:Preliminaries} we introduce basic notation and give an outline of 
the strategy of the proof of Theorem \ref{thm:ConnectivityPhaseTransition}.
This section also contains a short discussion about the fractal ball model.
In Section \ref{sec:inducedBool} we provide a detailed description of the process 
of caps on the unit sphere induced by the (fat) Poisson cylinder model. Furthermore, 
in Section \ref{sec:inducedBool} we state two auxiliary theorems 
(Theorems \ref{thm:cover} and \ref{thm:disconnect}) which will imply Theorem 
\ref{thm:ConnectivityPhaseTransition}. The induced
process of caps is further analyzed in Section \ref{sec:Boolcaps}, while
a more general version of Theorem \ref{thm:cover} is proved in Section \ref{sec:Proof1}.
Section \ref{sec:fracball} is dedicated to proving Theorem \ref{thm:Cbounded} and is 
used in Section \ref{sec:Proof2} in order to prove Theorem \ref{thm:disconnect}.

\section{Preliminaries} \label{sec:Preliminaries}

\subsection{Notation}\label{sec:Notation}

Let $\langle \,\cdot\,,\,\cdot\,\rangle$ denote the scalar product 
and $\|\,\cdot\,\|$ the Euclidean norm in $\BR^d$. By 
$B(x,r) \subset \BR^d$ we denote an open Euclidean ball centered 
at $x$ and with radius $r>0$.  When the dimension $d$ of a ball will become important we will emphasize it by writing $B^d(x,r)$. Similarly, while we write $\Vert x \Vert$ for the 
usual Euclidean norm in $\BR^d,$ we will write $\Vert x \Vert_{d-1}$
in order to emphasize that we are dealing with the Euclidean norm of a point 
$x\in\BR^{d-1}.$
Further, for $ d \in \mathbb{N}, $ denote by
$$
\omega_d := {2\pi^{d/2}\over\Gamma({d\over 2})}
$$
the surface area of the $(d-1)$-dimensional Euclidean unit sphere $\BS^{d-1}$. By $\sigma_{d-1}$ we denote the $(d-1)$-dimensional 
spherical Lebesgue measure on $\BS^{d-1}$ so that $\sigma_{d-1}(\BS^{d-1})=\omega_d$. Given a finite set $A$ we denote by $|A|$ its cardinality. For a given subset $B\subset \BR^d$ and $0<a<\infty$ we write $aB:=\{ax\colon x\in B\}$.

Consider a metric space 
$(\BS^{d-1}, d_{s})$, 
where 
\begin{align}\label{eq:SphericalDistance}
\cos d_s(u,v) = \langle u,v\rangle,\qquad u,v\in\BS^{d-1}.
\end{align}
We let
$$
\scap(u,\alpha):=\{v\in\BS^{d-1}\colon d_s(u,v)< \alpha\}\subset\BS^{d-1}
$$ 
be an open spherical cap with center $x\in\BS^{d-1}$ and (spherical)
radius $\alpha\in(0,\pi]$. We use  two different 
projections. Firstly, we let $\Pi_{\rm rad}(x)$ 
denote the radial projection that maps a point 
$x\in \BR^d\setminus \{o\}$ onto ${x\over\|x\|}\in\BS^{d-1}$. 
Secondly, we define the vertical projection 
$\Pi_{\rm vert}: \BS^{d-1}_{+} \to B^{d-1}(o,1)$ 
which maps a point 
$x\in \BS^{d-1}_+
:=\{(x_1,\ldots, x_d): x_1^2+\ldots+x_d^2=1, x_d >0\}$
in the upper hemisphere of $\BS^{d-1}$
onto the point $(x_1,\ldots, x_{d-1})\in B^{d-1}(o,1).$ 
Note the restricted domain of this projection, which ensures that 
it is injective so that
the inverse $\Pi^{-1}_{\rm vert}(y)$ is unique
for every $y\in B^{d-1}(o,1).$

\subsection{Hyperbolic geometry}\label{sec:HyperbolicGeometry}

In this paper we work with the Poincar\'e ball model for the $d$-dimensional hyperbolic space $\BH^d$, that is, we identify $\BH^d$ with the open Euclidean unit ball $\BB^d$ together with the Poincar\'e metric given by
\begin{equation}\label{eq:HypMetric}
	\cosh d_h(x,y) := 1+{2\|x-y\|^2\over (1-\|x\|^2)(1-\|y\|^2)},\qquad x,y\in\BB^d.
\end{equation}
Recall that by $A_{h}(d,1)$ we denote the space of  one dimensional totally geodesic subspace of $\BH^d$. In the Poincar\'e model a hyperbolic line $L\in A_h(d,1)$, is identified with the intersection of a Euclidean circle $\BS^1(L)$, orthogonal to $\BS^{d-1}$, with the unit ball $\BB^{d}$. In the case when $o\in L$ we have that $\BS^1(L)$ is a circle of infinite radius, namely a Euclidean line.  Further by $G_h(d,d-1)$ we denote the hyperbolic Grassmannian
of totally geodesic hypersurfaces of $\BH^d$ passing 
through the centre $o$ of $\BB^d$, and let $H$ denote an element in $G_h(d,d-1).$ In the Poincar\'e ball model any $H\in G_h(d,d-1)$ is identified with an intersection of Euclidean linear $(d-1)$-dimensional subspace with $\BB^d$.
Let $u_H\in \BS^{d-1}$
be one of two possible normal unit vectors 
of $H$, and observe that $\pm u_H$ both correspond to the same $H\in G_h(d,d-1).$ 
In this paper, it is most of the time only $H$ which is relevant, but later 
(in particular Section \ref{sec:inducedBool}), the choice of normal vector becomes pertinent.
Therefore, when we write $H\in G_h(d,d-1)$ we are really considering the pair
$(H,u_H),$ i.e. $H$ along with its orientation, but we only stress this when the choice 
of $u_H$ is important. 
We then use uniform measure $\omega_{d}^{-1}\sigma_{d-1}(\dint u_H)$ on $\BS^{d-1}$ in order to construct an isometry invariant measure $\mu^h$ on the space $A_h(d,1)$. In order to 
introduce this measure we recall that an element $L\in A_h(d,1)$ 
is uniquely determined by its orthogonal hypersurface $H\in G_h(d,d-1)$ 
and the intersection point 
$x=L\cap H$. For given $H\in G_h(d,d-1)$ and $x\in H$ we write $L=L(H,x)\in A_h(d,1)$ for the corresponding hyperbolic line.
Using this parametrization we can express the invariant measure $\mu^h$ on $A_h(d,1)$ as
\begin{equation}\label{eq_plane_intensity}
	\mu^h(\,\cdot\,) := \omega_{d}^{-1}\int_{\BS^{d-1}}\int_H\cosh(d_h(x,o))\,{\bf 1}\{L(H,x)\in\,\cdot\,\}\,\CH^{d-1}(\dint x)\,\sigma_{d-1}(\dint u_H),
\end{equation} 
where $\CH^{d-1}$ denotes the
$(d-1)$-dimensional Hausdorff measure with respect to the hyperbolic structure, see \cite[Equation (17.41)]{Santalo}.

\subsection{Outline of the proof}

The proof of Theorem \ref{thm:ConnectivityPhaseTransition} will proceed
through a number of steps. We will start by noting that the hyperbolic Poisson cylinder process considered in the Poincar\'e ball model can be naturally identified with the Poisson process of spherical caps (see Section \ref{sec:inducedBool}). As a next step we will then analyze this cap process using known results about scale invariant processes in $\BR^d$. 

Since throughout the paper we will work with different processes living in $\BH^d$, $\BS^{d-1}$ or $\BR^{d-1}$ we will use the following conventions, where the superscripts $h$, $s$ and 
$E$ indicate whether we are working in hyperbolic, spherical or Euclidean space.

\begin{enumerate}
\item Our starting point is a Poisson  processes of hyperbolic
lines in hyperbolic space, i.e. on $A_h(d,1)$, with intensity measure $\lambda \mu^h$, $\lambda>0$. Such a process will be denoted by 
$\Psi(\lambda \mu^h)$. The 
corresponding covered region will be $\CC(\lambda \mu^h)\subset \BH^d$, see \eqref{eqn:Cdef}.

\item The Poisson  process of cylinders will induce several Poisson 
point processes on $\BS^{d-1}\times (0,c]$ (where $0<c\leq \pi$ is some 
number). Such processes will be denoted by $\Psi(\lambda \mu^s)$, 
where $\mu^s$ is some measure on $\BS^{d-1}\times (0,c]$, which will be defined later. 
Here, a pair $(u,\alpha)$ 
corresponds to an open spherical cap $\scap(u,\alpha)$, and the 
covered region is given by 
\[
\CC(\lambda \mu^s)=\bigcup_{(u,\alpha)\in \Psi(\lambda \mu^s)}
\scap(u,\alpha)\subset \BS^{d-1}.
\]

\item In order to analyze the random set $\CC(\lambda \mu^s),$ it will 
be necessary to 
project it vertically to $\BR^{d-1}.$ Therefore, we shall consider 
Poisson  processes $\Psi(\lambda \mu^E)$ on $\BR^{d-1}\times (0,\tilde c]$ for some constant $0<\tilde c<\infty$. Here, $\mu^E$ is some measure on $\BR^{d-1}\times (0,\tilde c]$, with
which we will have to deal in several different versions 
(for instance the one given by \eqref{eqn:muEdef}).
The corresponding covered region is given by
\[
\CC(\lambda \mu^E)=\bigcup_{(x,r)\in \Psi(\lambda \mu^E)}
B^{d-1}(x,r)\subset \BR^{d-1},
\] 
and the vacant region is given by 
$\CV(\lambda \mu^E)=\BR^{d-1}\setminus \CC(\lambda \mu^E)$. 
We note that in one case, the open ball $B^{d-1}(x,r)$ will be 
replaced by a distorted version. 
\end{enumerate}

The Poisson  processes $\Psi(\lambda \mu^h),\Psi(\lambda \mu^s)$ and 
$\Psi(\lambda \mu^E)$ are formally random counting measures, but it will be 
convenient for us to think of them as random countable collections of corresponding  elements (sets), which is a 
standard abuse of notation. For instance, given a measurable enumeration of the lines
in the process (see \cite{LP18} for more details), we will write 
$\Psi(\lambda \mu^h)=\{L_1,L_2,\ldots\}$ instead of 
$\Psi(\lambda \mu^h)=\sum_{i\geq 1} \delta_{L_i}$, where 
$\delta_{L_i}$ denotes Dirac measure on $L_i$.

\subsection{The fractal ball model} \label{sec:Notationfracball}
In this subsection we will  briefly describe the fractal ball model in 
order to further motivate our secondary result, Theorem \ref{thm:Cbounded}.

It is easy to prove 
(see \cite{Broman}) that $\Psi(\lambda \mu^E)$, where $\mu^E$ is given 
by \eqref{eqn:muEdef}, is a semi-scale invariant 
collection of balls in $\BR^{d}$ in the following sense. If $0<a<1$ and 
\[
\tilde{\Psi}_a:=\{G/a: G\in \Psi(\lambda \mu^E), \diam(G/a)\leq 2\},
\]
then $\tilde{\Psi}_a$ has the same distribution as $\Psi(\lambda \mu^E)$, where we use the convention that a pair $G=(x,r)\in\BR^d\times(0,\infty)$ is identified with the Euclidean ball $B(x,r)$ with radius $r$ centred at $x$.
Intuitively, this means that if we scale space by a factor of $1/a$ and 
then remove any ball that has become ``too large'' (i.e. diameter 
larger than 2),
we will not be able to tell the difference between the distributions of the random sets 
before and after scaling and deletion. In this context, the object 
that is typically studied is the complement set of $\CC(\lambda\mu^E),$
i.e.
\begin{equation} \label{eqn:defVfracball}
\CV(\lambda\mu^E):=\BR^d \setminus \CC(\lambda\mu^E). 
\end{equation}
We see that $\CV(\lambda\mu^E)$ is a random fractal set. 
Indeed, 
it follows
from the semi-scale invariance that for any fixed $x\in \BR^d,$ we have
that $\BP(x\in \CV(\lambda\mu^E))=0.$
Furthermore, the set $\CV(\lambda\mu^E)$ 
undergoes several phase transitions as the parameter $\lambda>0$ varies,
as we now briefly explain. First, there exists $\lambda_c<\infty$ such 
that for any $0<\lambda\leq \lambda_c$ (where the equality is non-trivial 
and was established in \cite{BC}), $\CV(\lambda\mu^E)$ contains
connected components larger than one point almost surely. This is 
called the connected phase
since $\CV(\lambda\mu^E)$ contains macroscopic connected components. Secondly, 
there exists $\lambda_c<\lambda_e<\infty$ such that for 
$\lambda_c<\lambda<\lambda_e,$ $\CV(\lambda\mu^E)$ is non-empty 
but does not contain any connected components larger than one point 
almost surely. This means that $\CV(\lambda\mu^E)$ is totally 
disconnected but non-empty. This phase is referred to as the 
dust phase. 
For $\lambda \geq \lambda_e$ (the equality is again non-trivial and was 
established in \cite{Broman}), we have that $\CV(\lambda\mu^E)=\emptyset$
with probability one. The definitions of these phases are natural, but 
one can also consider whether other phases exists. For instance, 
while having received considerably less attention (the only result
of this sort is \cite{O96}), one can ask whether there exists 
a non-empty ``sheet''-phase. Theorem 
\ref{thm:Cbounded} shows that it does, which is why we believe that 
this theorem may be of independent interest. We end this 
discussion by observing that while it is clear that 
$\lambda_{\rm sheet}\leq \lambda_c$, since a sheet is a connected component,
it would be desirable to establish whether 
$\lambda_{\rm sheet}<\lambda_c.$ However, since this is not relevant for the 
current paper, we leave this problem for future research.

\section{The induced Boolean model on $\BS^{d-1}$} \label{sec:inducedBool}

\begin{figure}[t] 
	\begin{center}
		\includegraphics[scale=0.5]{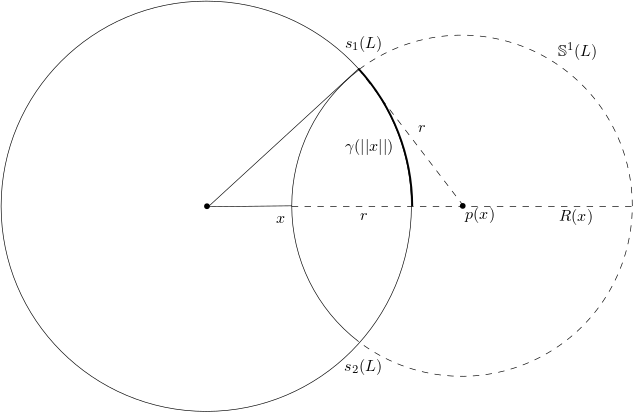}
	\end{center}
	\caption{\small Derivation of the identity for $ \gamma(\|x\|)$.}
	\label{Fig:ArcLengthProjection}
\end{figure}

In this section we will describe the connection between the cylinder process 
$\Psi(\lambda\mu^h)$ in $\BH^d$ and a Poisson  process of caps on the unit sphere $\BS^{d-1}$. Let $L=L(H,x)\in A_h(d,1)$ with $x\neq o$ be a hyperbolic line, which in the Poincar\'e model we identify with $\BS^1(L)\cap \BB^d$, where $\BS^1(L)$ is a circle orthogonal to $\BS^{d-1}$. Let $2\gamma(\|x\|)$ denote the spherical length of the radial projection $\Pi_{\rm rad}(\BS^1(L)\cap \BB^d)$ of $\BS^1(L)\cap \BB^d$ onto the unit sphere $\BS^{d-1}$ (see also Figure \ref{Fig:ArcLengthProjection}).
Note that the length of this projection only depends on the distance of $L$ to the origin. In the case $x=o$ we set $\gamma(0):={\pi\over 2}$, which clearly 
corresponds to the limit of $\gamma(\|x\|)$ as $\|x\| \to 0.$
It holds that
\begin{equation}\label{eq_alpha}
\gamma(\|x\|)=\arctan\Big({1-\|x\|^2\over 2\|x\|}\Big)\in \Big[0,{\pi\over 2}\Big].
\end{equation}
Indeed, let $s_1(L),s_2(L)\in\BS^{d-1}$ denote the points of intersection of the Euclidean circle $\BS^{1}(L)$ with the unit sphere 
$\BS^{d-1}$ (see again Figure \ref{Fig:ArcLengthProjection}). We will explain below, which of the intersection points 
is chosen to be $s_1(L)$ (respectively, $s_2(L)$), but at the moment it does not play a role.
In the case $L\in G_h(d,1),$ i.e.\ when $x=o,$ the circle $\BS^{1}(L)$ becomes a line, which can be considered as a circle passing through infinity. For $x\neq o$ let $R(x)$ denote the ray from $o$ through the point $x$. Then the hyperplane tangent to $\BS^{d-1}$ at the point $s_1(L)$ intersects the ray $R(x)$ in exactly one point $p(x)$, which can equivalently be described as the image of $x$ under inversion at $\BB^d$. Moreover, since $\BS^{1}(L)$ and $\BS^{d-1}$ intersect orthogonally, $p(x)$ is the center of the circle $\BS^{1}(L)$. Hence $r:=\|s_1(L)-p(x)\|=\|p(x)-x\|$ and the triangle with vertices $s_1(L)$, $p(x)$, $o$ is a right-angled triangle. Thus,
$$
\gamma(\|x\|)=\arctan(r),
$$
with $r^2+1=(r+\|x\|)^2$, or equivalently, $r= \frac{1-\|x\|^2}{2\|x\|}$, which gives \eqref{eq_alpha}. Having established \eqref{eq_alpha},
we note further that it follows from \eqref{eq:HypMetric} that
\begin{equation}\label{eq:EuclideanHyperbolicDist}
\|x\|=\sqrt{\frac{\cosh d_h(o,x)-1}{ \cosh d_h(o,x)+1}},
\end{equation}
and so we may conclude that
\begin{equation}\label{eq_transform}
\cot \gamma(\|x\|)=\sinh d_h(o,x).
\end{equation}
Furthermore, note that every hyperbolic line $L=L(H,x)\in A_h(d,1)$ is uniquely determined by 
the points $s_1(L),s_2(L)\in \BS^{d-1}$. 
When $x\neq o$ we may set 
\begin{equation}\label{eq_s}
\begin{array}{l}
s_{1}(L)=\cos\gamma(\|x\|)\Pi_{\rm rad}(x)+ \sin\gamma(\|x\|)u_{H},\\
s_{2}(L)=\cos\gamma(\|x\|)\Pi_{\rm rad}(x)- \sin\gamma(\|x\|)u_{H}
\end{array}
\end{equation}
where, as explained in connection to \eqref{eq_plane_intensity}, 
$u_{H}$ denotes the unit normal vector which defines $H,$ 
 see Figure~\ref{Fig:IntersectionPoints} for an illustration. 
At this point the choice between $u_H$ and $-u_H$ becomes relevant, see further Remark 
\ref{remark:uH} below.
\begin{figure}[t]
	\begin{center}
		\includegraphics[scale=0.6]{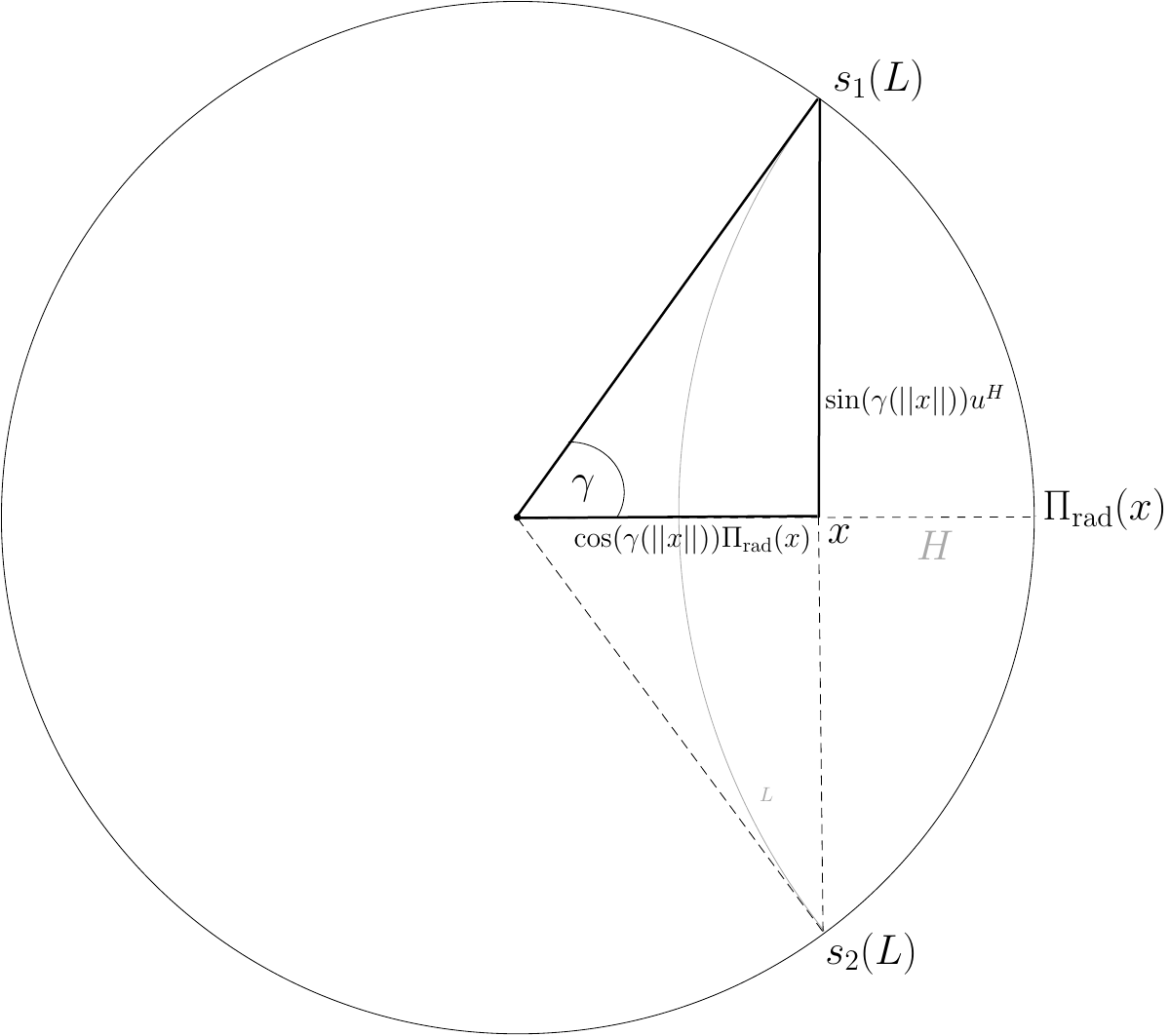}
	\end{center}
	\caption{\small Illustration supporting the derivation of \eqref{eq_s}.}
	\label{Fig:IntersectionPoints}
\end{figure}
In the case $x=o$ we set
$$
s_{1,2}(L)=\pm u_{H}.
$$
In what follows we will use formula \eqref{eq_s} also in the degenerate case $x=o$ by defining $\Pi_{\rm rad}(o):=(1,0,\ldots,0)$.

Let us now consider a cylinder $\cc_L$. In the next lemma we will show that 
the set $ \{s_1(L')\colon L'\in \hat{A}_L\} \subset \BS^{d-1} $, where $\hat{A}_L$ is given by \eqref{eq:DefHatAL}, is an open spherical 
cap and prove a  few useful properties. 

\begin{lemma}\label{lm:scaps}
For any $L=L(H,x)\in A_h(d,1)$ the set 
$ \{s_1(L')\colon L'\in \hat{A}_L\} \subset \BS^{d-1} $
 is an open spherical cap 
$\scap(z(L),\beta(\Vert x \Vert))$ with center $z(L)$ and spherical radius $\beta(\|x\|)\in (0,\arccos(1/\cosh(1))]$. Moreover, for $x\neq o$ we have
\begin{equation}\label{eq_center_cap}
z(L)=\cos(\gamma(a(\|x\|)\|x\|))\Pi_{\rm rad}(x)+ \sin(\gamma(a(\|x\|)\|x\|))u_{H},
\end{equation}
and 
$$
\beta(\Vert x \Vert)=\arccos\left({a(\|x\|)(1+\|x\|^2)\over 1+a(\|x\|)^2\|x\|^2}\right),
$$
where the function $a(t)$ has the form
\begin{equation}\label{eq_12_06_2}
a(t)={\sqrt{\cosh^2(1)(1-t^2)^2+4t^2}-\cosh(1)(1-t^2)\over 2t^2}\in (0,1).
\end{equation}
If $x=o$ we have $z(L)=u_{H}$ and $\beta(0)=\arccos(1/\cosh(1))$.
\end{lemma}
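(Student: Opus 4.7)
The plan is to recognize the map $y\mapsto s_1(L(H,y))$ from $H\cap\BB^d$ to $\BS^{d-1}$ as the inverse stereographic projection from $-u_H$. Using $\cos\gamma(r)=2r/(1+r^2)$ and $\sin\gamma(r)=(1-r^2)/(1+r^2)$, which follow immediately from \eqref{eq_alpha}, formula \eqref{eq_s} rewrites as
\[
s_1(L(H,y))=\frac{2y+(1-\|y\|^2)\,u_H}{1+\|y\|^2},\qquad y\in H\cap\BB^d,
\]
and the convention $y=o\mapsto u_H$ is consistent with the same formula. This is exactly the inverse stereographic projection from $-u_H$ of the Euclidean hyperplane $\{z\in\BR^d:\langle z,u_H\rangle=0\}$ onto $\BS^{d-1}\setminus\{-u_H\}$, restricted to the open Euclidean disk $H$ and landing in the open hemisphere $\{v:\langle v,u_H\rangle>0\}$.

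Next I use two facts: by \eqref{eq:DefHatAL}, $\hat A_L=\{L(H,y):y\in B_h^H(x,1)\}$, where $B_h^H(x,1)$ is the hyperbolic ball of radius $1$ around $x$ inside the totally geodesic copy of $\BH^{d-1}$ given by $H$; and in the Poincar\'e model this hyperbolic ball is a Euclidean ball $B^{d-1}(c_E,\rho_E)$ inside $H$. Since the inverse stereographic projection is a M\"obius transformation, it sends generalized spheres to generalized spheres, so $\{s_1(L'):L'\in\hat A_L\}$ is the intersection of $\BS^{d-1}$ with a Euclidean ball, i.e.\ an open spherical cap. This already proves the qualitative part of the lemma.

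To identify $z(L)$ and $\beta(\|x\|)$ I would first compute $c_E$ and $\rho_E$ explicitly. By rotational symmetry of $B_h^H(x,1)$ under Euclidean rotations of $H$ fixing the axis $\BR\Pi_{\rm rad}(x)$ (these fix $o$ and are hence Poincar\'e isometries), the Euclidean center $c_E$ lies on this axis, so a one-dimensional computation suffices. Writing $d:=d_h(o,x)$, using $\|x\|=\tanh(d/2)$ and the addition formula for $\tanh$ to solve $d_h(x,y)=1$ along the axis, the endpoints are $\tanh(d/2\pm 1/2)\,\Pi_{\rm rad}(x)$, whence
\[
c_E=\frac{\sinh(d)}{\cosh(d)+\cosh(1)}\,\Pi_{\rm rad}(x),\qquad \rho_E=\frac{\sinh(1)}{\cosh(d)+\cosh(1)}.
\]
A spherical cap corresponds to an affine hyperplane cutting $\BS^{d-1}$; I then translate the equation $\|y-c_E\|^2=\rho_E^2$ through the inverse stereographic formula using the identity $1+\|y\|^2=2/(1+\langle v,u_H\rangle)$, obtaining the cutting hyperplane $\langle v,\vec N\rangle=M$ with
\[
\vec N=(1-\|c_E\|^2+\rho_E^2)\,u_H+2c_E,\qquad M=1+\|c_E\|^2-\rho_E^2,
\]
so that $z(L)=\vec N/\|\vec N\|$ and $\cos\beta(\|x\|)=M/\|\vec N\|$.

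The remaining work is algebraic. Substituting the formulas for $c_E,\rho_E$ and using $\sinh^2(d)-\sinh^2(1)=\sinh(d+1)\sinh(d-1)$ together with the sum-to-product identity $\cosh(d\pm 1)+\cosh(d\mp 1)=2\cosh(d)\cosh(1)$ simplifies the expressions to
\[
z(L)=\frac{\sinh(d)\,\Pi_{\rm rad}(x)+\cosh(1)\,u_H}{\sqrt{\sinh^2(d)+\cosh^2(1)}},\qquad \cos\beta(\|x\|)=\frac{\cosh(d)}{\sqrt{\sinh^2(d)+\cosh^2(1)}}.
\]
To match the claimed form I would solve $\tan\gamma(a\|x\|)=\cosh(1)/\sinh(d)$ for $a=a(\|x\|)$; this is the quadratic $\|x\|^2\sinh(d)\,a^2+2\|x\|\cosh(1)\,a-\sinh(d)=0$, whose positive root, after rewriting $\sinh(d)$ and $\cosh(d)$ in terms of $\tanh(d/2)=\|x\|$, coincides with \eqref{eq_12_06_2}, and $\cos\beta=a(1+\|x\|^2)/(1+a^2\|x\|^2)$ then follows by the same substitution. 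The bound $\beta\leq\arccos(1/\cosh(1))$ reduces to $\cosh^2(d)(\cosh^2(1)-1)\geq\cosh^2(1)-1$, with equality iff $d=0$; this simultaneously handles the degenerate case $x=o$, in which $a\|x\|=0$ and $\gamma(0)=\pi/2$ give $z(L)=u_H$ and $\beta(0)=\arccos(1/\cosh(1))$. The main obstacle I anticipate is purely bookkeeping in the hyperbolic identities, together with keeping the sign convention for $u_H$ consistent between \eqref{eq_s} and the inverse-stereographic identification.
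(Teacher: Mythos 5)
Your proposal is correct, and it takes a genuinely different route from the paper's proof. The paper argues by direct computation in the parametrization \eqref{eq_s}: it posits a candidate center of the form $\cos(\gamma(a\|x\|))\Pi_{\rm rad}(x)+\sin(\gamma(a\|x\|))u_H$, expands $\cos d_s(s_1(L'),z(L))$, and chooses $a$ so that this quantity is constant over all $y$ with $d_h(x,y)=1$ (the equation $c_1(a,\|x\|)=c_2(a,\|x\|)$, whose positive root is \eqref{eq_12_06_2}); it then proves the inclusion $\{s_1(L')\colon L'\in\hat A_L\}\subset\scap(z(L),\beta(\|x\|))$ and, in a separate final step, the reverse inclusion by explicitly constructing a preimage $y'$ for each $v$ in the cap. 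You instead identify $y\mapsto s_1(L(H,y))$ as inverse stereographic projection from $-u_H$, note that $\hat A_L$ corresponds to the hyperbolic ball $B_h^H(x,1)$, which in the Poincar\'e model is a Euclidean ball in $H$, and invoke the sphere-preserving property of M\"obius maps. This buys you the qualitative statement (the image is an open cap) essentially for free, and --- since the inverse stereographic projection is a bijection of $H$ onto its image --- both inclusions come simultaneously, so you never need the paper's separate surjectivity argument. Your explicit formulas check out: $c_E=\sinh(d)(\cosh(d)+\cosh(1))^{-1}\Pi_{\rm rad}(x)$ and $\rho_E=\sinh(1)(\cosh(d)+\cosh(1))^{-1}$ are correct, the cutting-hyperplane data simplify to $z(L)=(\sinh(d)\Pi_{\rm rad}(x)+\cosh(1)u_H)/\sqrt{\sinh^2(d)+\cosh^2(1)}$ and $\cos\beta=\cosh(d)/\sqrt{\sinh^2(d)+\cosh^2(1)}$, and the quadratic $\|x\|^2\sinh(d)a^2+2\|x\|\cosh(1)a-\sinh(d)=0$ does recover \eqref{eq_12_06_2} and $\cos\beta=a(1+\|x\|^2)/(1+a^2\|x\|^2)$ after substituting $\sinh(d)=2\|x\|/(1-\|x\|^2)$. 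The only steps worth writing out fully in a final version are the standard facts you currently cite implicitly: that hyperbolic balls centred in the totally geodesic subspace $H$ intersect $H$ in Euclidean balls, and that the inverse stereographic image of the \emph{interior} of that ball is the open cap on the side $\langle v,\vec N\rangle>M$ (which your inequality derivation confirms). The paper's argument is more elementary and self-contained; yours is shorter, more conceptual, and makes the hyperbolic-trigonometric structure of $z(L)$ and $\beta$ transparent.
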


\begin{proof}
	We start by introducing some notation. For $z\in \mathbb{R}^d\backslash\{o\}$ we set $u_z:=\Pi_{\rm rad}(z)=z/\|z\|,$ 
	and let $u_o:=u_{H}$ given that $H\in G_{h}(d,d-1)$ and the normal vector $u_H$
	are fixed.
Now, for any $L' =L(H,y) \in \hat{A}_L$ we have that $y= y(L'):=L'\cap H $ satisfies $ d_h(x,y)<1 $. Let $ z(L) =\cos(\gamma(\|ax\|))\Pi_{\rm rad}(x)+ \sin(\gamma(\|ax\|))u_{H} $ for some $ a \ge 0 $. Then by \eqref{eq_s} and the definition of the spherical distance \eqref{eq:SphericalDistance} we have
\begin{align*}
\cos \big(d_s(s_1(L'),z(L))\big)&=\cos\gamma(\|y\|)\cos\gamma(\|ax\|)\langle u_x,u_y\rangle +\sin\gamma(\|y\|)\sin\gamma(\|ax\|),
\end{align*}
where we used that $u_x, u_y \in H$ and thus $ \langle u_x,u_{H}\rangle=\langle u_y,u_{H}\rangle =0$. Now, since
$\cos(\arctan(z))=\frac{1}{\sqrt{1+z^2}}$ and $\sin(\arctan(z))=\frac{z}{\sqrt{1+z^2}}$ for $z\in\BR$, \eqref{eq_alpha} yields
$$
\cos\gamma(\|x\|)={2\|x\|\over 1+\|x\|^2}\qquad\text{and}\qquad\sin\gamma(\|x\| )={1-\|x\|^2\over 1+\|x\|^2},
$$
which in turn gives
\begin{equation}\label{eq:CosSphericalDistance}
	\cos \big(d_s(s_1(L'),z(L))\big)={4a\|x\|\|y\|\langle u_x,u_y\rangle+(1-a^2\|x\|^2)(1-\|y\|^2)\over (1+\|y\|^2)(1+a^2\|x\|^2)}.
\end{equation}
Since $0\le d_h(x,y)< 1$ and $\cosh(z)$ is a strictly increasing function for 
positive $z\in\BR$ we have from \eqref{eq:HypMetric} that
$$
1+{2\|x-y\|^2\over (1-\|x\|^2)(1-\|y\|^2)}=\cosh d_h(x,y) < \cosh(1).
$$
From this we can deduce the inequality
\[
	\langle x-y,x-y \rangle< \frac{\cosh(1)-1}{2}(1-\|x\|^2)(1-\|y\|^2),
	\]
	which in turn is equivalent to
	\begin{equation}\label{eq:Equation3}
	2 \langle x,y \rangle > -  \frac{\cosh(1)-1}{2}(1-\|x\|^2)(1-\|y\|^2)+ \|x\|^2+ \|y\|^2.
\end{equation}
Further, since $2\langle x,y\rangle = 2\|x\|\|y\|\langle u_x,u_y\rangle$ we obtain
\begin{equation}\label{eq_12_06_1}
2\|x\|\|y\|\langle u_x,u_y\rangle > \big(1+c(1-\|x\|^2)\big)\|y\|^2+\|x\|^2-c(1-\|x\|^2)
\end{equation}
for $c:=(\cosh(1)-1)/2$.
Using this inequality in \eqref{eq:CosSphericalDistance} we see that
\begin{align}\label{eq:CosDistance}
\cos \big(d_s(s_1(L'),z(L))\big)&> {\big(2a(1+c(1-\|x\|^2))-1+a^2\|x\|^2\big)\|y\|^2\over (1+\|y\|^2)(1+a^2\|x\|^2)}\notag\\
&\qquad\qquad +{(2\|x\|^2-2c(1-\|x\|^2))a+1-a^2\|x\|^2\over (1+\|y\|^2)(1+a^2\|x\|^2)}.
\end{align}
 Moreover, equality holds if and only if $d_h(x,y)=1$. Next we show that for $a=a(x)$ we have $ d_s(z(L), s_1(L(H,y_1)))=d_s(z(L), s_1(L(H,y_2))) $ for all $ y_1, y_2$ with $d_h(x,y_i)=1, i=1,2$. Due to \eqref{eq:CosDistance}  the latter is equivalent to
 \begin{align}\label{eq:MaxDistanceCenter}
 	{c_1(a,\|x\|)\|y_1\|^2+c_2(a,\|x\|)\over (1+\|y_1\|^2)(1+a^2\|x\|^2)}=	{c_1(a,\|x\|)\|y_2\|^2+c_2(a,\|x\|)\over (1+\|y_2\|^2)(1+a^2\|x\|^2)},
 \end{align}
 where  $ c_1(a,\|x\|)=2a(1+c(1-\|x\|^2))-1+a^2\|x\|^2 $ and $ c_2(a,\|x\|)=(2\|x\|^2-2c(1-\|x\|^2))a+1-a^2\|x\|^2$. Equation \eqref{eq:MaxDistanceCenter} is in turn equivalent to
 \begin{align*}
 	c_1(a,\|x\|)\|y_1\|^2+c_2(a,\|x\|)\|y_2\|^2= 	c_1(a,\|x\|)\|y_2\|^2+c_2(a,\|x\|)\|y_1\|^2,
 \end{align*}
which leads to 
\begin{equation}\label{eq:Equation1}
		c_1(a,\|x\|)= c_2(a,\|x\|)
\end{equation}
for $ \|y_1\|\neq \|y_2\|$. This equality has a unique non-negative solution $a(\|x\|)$ given by \eqref{eq_12_06_2}. In particular, in this case we have
\begin{equation}\label{eq:Equation2}
c_1(a(\|x\|),\|x\|)={1\over 2}c_1(a(\|x\|),\|x\|)+{1\over 2}c_2(a(\|x\|),\|x\|)=a(\|x\|)(1+\|x\|^2),
\end{equation}
 and \eqref{eq:CosDistance} becomes
\begin{align*}
	\cos \big(d_s(s_1(L'),z(L))\big) &>  {c_1(a(\|x\|),\|x\|)\|y\|^2+c_2(a(\|x\|),\|x\|)\over (1+\|y\|^2)(1+a(\|x\|)^2\|x\|^2)}\\
	&=	{c_1(a(\|x\|),\|x\|)\over (1+a(\|x\|)^2\|x\|^2)}={a(\|x\|)(1+\|x\|^2)\over 1+a(\|x\|)^2\|x\|^2}.
\end{align*}
Since the function $z\mapsto\arccos(z)$ is decreasing on $(-1,1)$ we conclude that
$$
d_{s}(s_1(L'),z (L))< \beta(\|x\|)
$$
holds for any $L'\in\hat{A}_L$ with equality if and only if $d_h(x,y)=1$. In the exceptional case $x=o$ inequality \eqref{eq_12_06_1} becomes $(1+\|y\|^2)\leq \cosh(1)(1-\|y\|^2)$ and we get
$$
d_s(s_1(L'),z(L))=\arccos\big(\sin\gamma(\|y\|)\big)\leq \arccos(1/\cosh(1))=\beta(0),
$$
with equality if and only if $d_h(0,y)=1$. Thus, 
\[
\{s_1(L')\colon L'\in \hat{A}_L\}\subset \scap(z(L),\beta(\|x\|)).
\]  
It remains to ensure that for any $v\in \scap(z(L),\beta(\|x\|))$ 
there exists $L'\in \hat{A}_L$ such that $s_1(L')=v$. As any line $L'\in \hat A$ 
is uniquely determined by the point $y':=L'\cap H$ it is enough to construct this point.

Given $v=(v_1,\ldots,v_d)\in\BS^{d-1}$ satisfying 
$\langle v,u_{H}\rangle\ge 0$ denote by $v'$ the orthogonal projection 
of $v$ onto $H$. Then there exists a unique vector $u'=v'\|v'\|\in H\cap \BS^{d-1}$ and $\gamma={\pi\over 2}-\arccos\langle v,u_H\rangle\in [0,{\pi\over 2}]$, such that $v=\cos(\gamma)u'+\sin(\gamma)u_H$. Set $y':=(1-\sin(\gamma))\cos(\gamma)^{-1}u'\in H$. Then
\[
\cos(\gamma)={2\|y'\|\over 1+\|y'\|^2}\qquad\text{ and }\qquad \sin(\gamma)={1-\|y'\|^2\over 1+\|y'\|^2}.
\]
Further since $v\in \scap(z(L),\beta(\|x\|))$ and according to \eqref{eq_center_cap} we have
\begin{align*}
\cos(d_s(v,z(L)))&=\cos(\gamma(a(\|x\|)\|x\|))\cos(\gamma)\langle u_x, u'\rangle+\sin(\gamma(a(\|x\|)\|x\|))\sin(\gamma)\\
&={4a(\|x\|)\over (1+a(\|x\|)^2\|x\|^2)(1+\|y'\|^2)}\langle x, y'\rangle+{(1-a(\|x\|)^2\|x\|^2)(1-\|y'\|^2)\over (1+a(\|x\|)^2\|x\|^2)(1+\|y'\|^2)}\\
&\ge {a(\|x\|)(1+\|x\|^2)\over 1+a(\|x\|)^2\|x\|^2}={c_1(a(\|x\|),\|x\|)\|y'\|^2+c_2(a(\|x\|),\|x\|)\over (1+\|y'\|^2)(1+a(\|x\|)^2\|x\|^2)},
\end{align*}
since $a(\|x\|)$ solves \eqref{eq:Equation1} and due to \eqref{eq:Equation2}. The latter implies that 
\begin{align*}
2\langle x, y'\rangle&\ge {c_1(a(\|x\|),\|x\|)\|y'\|^2+c_2(a(\|x\|),\|x\|)\over 2a(\|x\|)}-{(1-a(\|x\|)^2\|x\|^2)(1-\|y'\|^2)\over 2a(\|x\|)}\\
&=\|y'\|^2+\|x\|^2-c(1-\|x\|^2)(1-\|y'\|^2),
\end{align*}
which according to \eqref{eq:Equation3} is equivalent to $d_h(x,y')\leq 1$.
\end{proof}
Using Lemma \ref{lm:scaps} we may naturally identify the Poisson  process $\Psi(\lambda\mu^h)$ of hyperbolic lines with the process of spherical caps in the following way. Consider a measurable mapping 
\begin{equation}\label{eq:F-}
\begin{aligned}
F^{-}:A(d,1)&\mapsto \BS^{d-1}\times (0,\pi],\\
L=L(H,x)&\mapsto \big(z(L), \beta(\Vert x \Vert)\big).
\end{aligned}
\end{equation}
By the mapping theorem \cite[Theorem 5.1]{LP18} for Poisson 
point processes,
$\Psi(\lambda \mu^{s,-}):=F^-(\Psi(\lambda\mu^h))$ is a 
Poisson  process on $\BS^{d-1}\times (0,\pi]$ with intensity 
measure $\lambda \mu^{s,-}:= F^-(\lambda\mu^h)$, the image measure 
of $\lambda\mu^h$ under the mapping $F^-$. As usual, we let
$$
\CC(\lambda \mu^{s,-})
=\bigcup_{(u,\alpha)\in\Psi(\lambda \mu^{s,-})}\scap(u,\alpha)
$$
be the random subset of $\BS^{d-1}$ covered by the caps induced by 
the point process $\Psi(\lambda \mu^{s,-})$.
\begin{remark} \label{remark:uH}
We make two observations that may be of use. Firstly, the center point 
$z(L)$ is not equal to $s_1(L)$ as seen by comparing \eqref{eq_center_cap} to \eqref{eq_s}.
Secondly, if $\pm u_H$ were not both represented in \eqref{eq_plane_intensity}, 
the resulting cap process $\Psi(\lambda \mu^{s,-})$
would not be rotationally invariant. For instance, if $u_H$ would always have positive 
first coordinate, the resulting cap process would be very different from the current 
one. 
\end{remark}

\begin{lemma}\label{lm:coupling}
There is a coupling of $\Psi(\lambda \mu^{s,-})$ 
and $\Psi(\lambda\mu^h)$ such that the following holds: if $\CC(\lambda \mu^{s,-})$ is connected, then $\CC(\lambda\mu^h)$ is connected.
\end{lemma}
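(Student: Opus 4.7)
The plan is to take the natural coupling obtained by pushing forward via $F^{-}$: set $\Psi(\lambda\mu^{s,-}) := F^{-}(\Psi(\lambda\mu^h))$, which by the mapping theorem for Poisson point processes carries the correct intensity and under which each cap $\scap(z(L),\beta(\|x\|))$ in the sphere process is tied to its originating cylinder $\cc_L \subset \CC(\lambda\mu^h)$. First I would observe that each individual cylinder $\cc_L$ is itself connected in $\BH^d$, being the union of the geodesics in $\hat A_L$ parametrized by the connected set of foot points $\{y\in H:d_h(y,x)<1\}$ through which they all pass. It therefore suffices to establish the following pairwise statement: whenever the two open caps $\scap(z(L_1),\beta(\|x_1\|))$ and $\scap(z(L_2),\beta(\|x_2\|))$ have non-empty intersection in $\BS^{d-1}$, the cylinders $\cc_{L_1}$ and $\cc_{L_2}$ intersect in $\BH^d$. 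Given this pairwise statement, connectedness of the cap cover transfers to connectedness of the cylinder cover by a standard intersection-graph argument.

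To prove the pairwise statement, pick $v$ in the intersection of the two caps and pass from the Poincar\'e ball to the upper half-space model of $\BH^d$ via a M\"obius isometry sending $v$ to $\infty$; then $\BS^{d-1}\setminus\{v\}$ is identified with $\BR^{d-1}$ (the boundary of the upper half-space), and each cap, having $v$ as an interior point, becomes an open subset of $\BR^{d-1}$ that is a neighborhood of infinity, i.e.\ the complement of some bounded set. Each cylinder $\cc_{L_i}$ becomes the union of one vertical geodesic, namely the image of the unique $L'_i\in\hat A_{L_i}$ with $s_1(L'_i)=v$, together with the Euclidean semicircular geodesics obtained from the other elements of $\hat A_{L_i}$, whose $s_1$-endpoints fill the image of the cap $\scap(z(L_i),\beta(\|x_i\|))$ in $\BR^{d-1}$ by Lemma \ref{lm:scaps}. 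Let $w_1\in\BR^{d-1}$ denote the base of the vertical geodesic arising from $L'_1$. Among the semicircles in $\cc_{L_2}$, those with $s_1$-endpoint $u_2$ very far from the origin have their $s_2$-endpoint confined to a bounded region (since the foot point $L''\cap H_2$ remains within hyperbolic distance $1$ of $x_2$), so their Euclidean radius is approximately $\|u_2\|/2$ and their base disk approximates the Euclidean ball of radius $\|u_2\|/2$ centered at $u_2/2$. Since the image of $\scap(z(L_2),\beta(\|x_2\|))$ is a neighborhood of infinity, one can take $\|u_2\|$ arbitrarily large with $u_2$ in any preferred direction; choosing this direction so that $\langle w_1,u_2\rangle>\|w_1\|^2$ guarantees that the base disk contains $w_1$, and hence the semicircle passes directly over $w_1$ at some positive height. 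The resulting point lies on the vertical geodesic $L'_1\subset\cc_{L_1}$ and on the semicircle in $\cc_{L_2}$, so $\cc_{L_1}\cap\cc_{L_2}\neq\emptyset$ as required.

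The main obstacle is turning the approximate geometry in the upper half-space model into a rigorous argument. In particular, one must verify by a careful asymptotic computation, starting from the parametrization of $\hat A_{L_2}$ by its foot points in $H_2$, that the $s_2$-endpoint of the semicircle really does stay bounded as $u_2\to v=\infty$, and that the base disk genuinely contains $w_1$ for all sufficiently large $\|u_2\|$ in the appropriate directions. A continuity argument using Lemma \ref{lm:scaps} combined with the explicit action of the chosen M\"obius isometry on the boundary should make this routine, but it does require some bookkeeping.
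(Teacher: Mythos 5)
Your coupling and your reduction are the same as the paper's: you realize $\Psi(\lambda\mu^{s,-})$ as $F^-(\Psi(\lambda\mu^h))$ and reduce everything to the pairwise claim that if two end caps intersect then the corresponding cylinders intersect (the paper's proof of Lemma \ref{lm:coupling} consists of exactly this pairwise claim, with the chain/intersection-graph step left implicit and used later in the proof of Theorem \ref{thm:ConnectivityPhaseTransition}). The difference lies in how the pairwise claim is established, and it is there that your argument has a genuine gap.

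In the upper half-space model with $v\mapsto\infty$, a geodesic with ideal endpoints $a,b\in\BR^{d-1}$ is a Euclidean semicircle contained in the \emph{two-dimensional} vertical plane through $a$ and $b$; the set of boundary points it passes over is the open segment $(a,b)$, not the $(d-1)$-dimensional ball with diameter $[a,b]$. Your condition $\langle w_1,u_2\rangle>\|w_1\|^2$ only places $w_1$ inside that ball, which for $d\geq 3$ does not imply that the semicircle from $s_2$ to $u_2$ meets the vertical line over $w_1$: for that, $w_1$ must lie on the one-dimensional chord $(s_2,u_2)$, a codimension-$(d-2)$ condition, and since $s_2$ depends on $u_2$ you cannot simply aim $u_2$ at $w_1$ without an additional fixed-point or intermediate-value argument. (In $d=2$ the ``disk'' degenerates to a segment and your argument is fine.) The cleanest repair within your setup is to abandon individual semicircles: $\cc_{L_2}$ is the full preimage, under nearest-point projection onto $H_2$, of the open unit disk around $x_2$ in $H_2$, and since $v$ lies in the open cap associated with $L_2$, continuity of this projection up to the ideal boundary gives a neighborhood $U$ of $v$ in the compactification of $\BH^d$ with $U\cap\BH^d\subset\cc_{L_2}$; the vertical geodesic $L_1'\subset\cc_{L_1}$ converges to $v$ and hence eventually enters $\cc_{L_2}$. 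The paper's route is different again and avoids the half-space model entirely: it picks a small cap $\scap(u,\varepsilon)$ inside the intersection of the two end caps, takes the totally geodesic hypersurface $F$ with $\Pi_{\rm rad}(F)=\scap(u,\varepsilon)$, and observes that every point of $F$ lies on a geodesic of $\hat{A}_{L_1}$ and on one of $\hat{A}_{L_2}$, whence $\emptyset\neq F\subset\cc_{L_1}\cap\cc_{L_2}$.
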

\begin{proof}
Consider two arbitrary hyperbolic lines $L=L(H,x)$ and $L'=L(H',x')$. We will show that if the open caps $S:=\scap(z(L),\beta(\|x\|))$ and $S':=\scap(z(L'),\beta(\|x'\|))$ intersect, then the corresponding cylinders $\cc_L$ and $\cc_{L'}$ intersect as well. Let $u\in S\cap S'$, then there is some $\varepsilon>0$ such that $\scap(u,\varepsilon)\subset S\cap S'$. Let $F\subset \BH^d$ be the totally geodesic $(d-1)$-dimensional subspace, such that 
$\Pi_{\rm rad}(F)=\scap(u,\varepsilon)$.
 By construction of $S$ and $S'$ from Lemma \ref{lm:scaps} it is clear that for any point $z\in F$ we can find geodesics $M\in \cc_L$ and $M'\in \cc_{L'}$ passing through $z$. In particular, this means that $s_1(M),s_1(M')\in \scap(u,\varepsilon)$. Thus, 
$\emptyset \neq  F \subset \cc_L\cap \cc_{L'}$.
\end{proof}

There is another "larger" spherical cap process, associated with $\Psi(\lambda\,\mu^h)$ which we are going to use later in this paper. Namely, given some $r\ge r_0:=\sqrt{(\cosh(1)-1)/(\cosh(1)+1)}$ let us define the set
$$
A^r:=\big\{L(H,x)\in A(d,1)\colon \|x\|>r\big\},
$$
which consists of all lines $L(H,x)\in A(d,1)$ with $d_h(o,x)> 1$. In particular the latter means that $o\not\in \cc_L$ for any $L\in A^r$. Further consider the mapping
\begin{equation}\label{eq:F+}
\begin{aligned}
F^+_r: A^r &\to \BS^{d-1}\times  (0,\pi],\\
L(H,x)&\mapsto \big(\Pi_{\rm rad}(x), \gamma(\|x\|)+2\beta(\Vert x \Vert)\big).
\end{aligned}
\end{equation}
It should be noted, that for any hyperbolic line $L(H,x)\in A^r$ we have 
$$
\gamma(\|x\|)+2\beta(\|x\|)<\pi
$$
by \eqref{eq_alpha} and since the function $\beta(r)$ is strictly decreasing with $\beta(r_0)<{\pi\over 4}$. 
Hence, the mapping $F^+_r$ is well-defined for any $r\ge r_0$. Again by the mapping property and the restriction property of Poisson  processes (see \cite[Theorem 5.1]{LP18} and \cite[Theorem 5.2]{LP18}, respectively), we conclude that 
$\Psi(\lambda \mu^{s,+}_{r}):=F^+_r(\Psi(\lambda\mu^h)\cap A^r)$ is a Poisson  process on $\BS^{d-1}\times  (0,\pi]$ with intensity 
measure $\lambda \mu^{s,+}_{r}:= F^+_r(\lambda\mu^h|_{A^r})$, where $\mu^h|_{A^r}$ denotes the restriction of the measure $\mu^h$ to the set $A^r$. Note also that by Lemma \ref{lm:scaps} for any $\cc_{L}$ with $L\in A^r$ the radial projection $\Pi_{\rm rad}(\cc_L)$ of the cylinder $\cc_L$ is well defined, since $o\notin \cc_L$ and by Lemma \ref{lm:scaps} we have
\begin{equation} \label{eqn:PicLsubset}
\Pi_{\rm rad}(\cc_L) \subset 
\scap\big(\Pi_{\rm rad}(x), \gamma(\|x\|)+2\beta(\Vert x \Vert)\big).
\end{equation}
\begin{remark}
We observe that $\gamma(\|x\|)+2\beta(\Vert x \Vert)$ is chosen 
as the radius of the spherical cap in the right hand side of \eqref{eqn:PicLsubset}
since $\gamma(\|x\|)$ is half the length of the radial projection of $L(H,x),$ while
$2\beta(\Vert x \Vert)$ is the diameter of the ``endcap'' 
$\scap(z(L),\beta(\Vert x \Vert)).$ The reason for using the diameter 
$2\beta(\Vert x \Vert)$ rather than the radius $\beta(\Vert x \Vert)$ is 
the fact 
that $s_1(L)$ is not the center of the cap $\scap(z(L),\beta(\Vert x \Vert))$
 (as observed in Remark \ref{remark:uH}). Of course, the inclusion \eqref{eqn:PicLsubset} may seem to be somewhat 
``wasteful'' since
the radial projection of $\cc_L$ will be much smaller than 
$\scap\big(\Pi_{\rm rad}(x), \gamma(\|x\|)+2\beta(\Vert x \Vert)\big),$ but it is 
enough for our purposes.
\end{remark}
From the construction it is evident that there is a coupling of the two Poisson  processes
$\Psi(\lambda \mu^{s,+}_{r})$ 
and $\Psi(\lambda\mu^h)$ such that 
\begin{equation} \label{eqn:couplingcc+}
\Pi_{\rm rad}\Big( \bigcup_{L\in\Psi(\lambda \mu^h)\cap A^r}\cc_L\Big)
\subset
\CC(\lambda \mu^{s,+}_{r}).
\end{equation}
Concerning the random sets $\CC(\lambda \mu^{s,-})\subset\BS^{d-1}$ 
and $\CC(\lambda \mu^{s,+}_{r})\subset\BS^{d-1}$ 
of caps induced by the Poisson 
point processes $\Psi(\lambda \mu^{s,-})$ and 
$\Psi(\lambda \mu^{s,+}_{r})$, respectively, we prove 
the following two results from which we will derive our main 
theorem below.

\begin{theorem} \label{thm:cover}
There exists a constant $\lambda_1<\infty$ only depending on $d$, 
such that for all $\lambda>\lambda_1$ we have that
\[
\BP(\CC(\lambda \mu^{s,-})=\BS^{d-1})=1.
\]
\end{theorem}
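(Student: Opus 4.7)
The plan is to reduce the covering question to an elementary finite-scale Poisson estimate and then use ergodicity to pass from positive probability to probability one. First, since $\mu^h$ is invariant under the stabilizer of $o$ in the isometry group of $\BH^d$ (which in the Poincar\'e ball model acts as Euclidean rotations of $\BS^{d-1}$), and since the mapping $F^-$ of \eqref{eq:F-} is equivariant under these rotations, the intensity measure $\mu^{s,-}$ is rotation invariant on $\BS^{d-1}\times(0,\pi]$. By Lemma \ref{lm:scaps} it is supported on $\BS^{d-1}\times(0,\arccos(1/\cosh 1)]$, so a disintegration yields
\[
\mu^{s,-}(\dint u\,\dint \alpha)=\omega_d^{-1}\,\sigma_{d-1}(\dint u)\,\nu(\dint \alpha),
\]
for some locally finite measure $\nu$ on $(0,\arccos(1/\cosh 1)]$. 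Since $\beta(\|x\|)$ takes values close to $\arccos(1/\cosh 1)$ on the set of lines with $\|x\|$ close to $0$, which has positive and finite $\mu^h$-mass, we can choose $\alpha_0\in(0,\arccos(1/\cosh 1))$ with $c_0:=\nu([\alpha_0,\arccos(1/\cosh 1)])\in(0,\infty)$.

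Next I fix a finite set $v_1,\ldots,v_M\in\BS^{d-1}$ (with $M=M(d,\alpha_0)$) such that the half-size caps $\scap(v_j,\alpha_0/2)$ cover $\BS^{d-1}$. The key geometric observation is that if $(u,\alpha)\in\Psi(\lambda\mu^{s,-})$ satisfies $u\in\scap(v_j,\alpha_0/2)$ and $\alpha\ge\alpha_0$, then by the triangle inequality for $d_s$ one has $\scap(v_j,\alpha_0/2)\subset\scap(u,\alpha)$. The number of such points of $\Psi(\lambda\mu^{s,-})$ is Poisson distributed with common (by rotation invariance) mean
\[
m(\lambda):=\lambda\,\omega_d^{-1}\,\sigma_{d-1}(\scap(v_1,\alpha_0/2))\,c_0,
\]
so a union bound yields
\[
\BP\big(\CC(\lambda\mu^{s,-})\neq\BS^{d-1}\big)\le M\,e^{-m(\lambda)},
\]
which is strictly less than $1$ for all sufficiently large $\lambda$, say $\lambda>\lambda_1$. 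In particular $\BP(\CC(\lambda\mu^{s,-})=\BS^{d-1})>0$ for every $\lambda>\lambda_1$.

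Finally I upgrade positive probability to probability one via a zero--one law. The event $\{\CC(\lambda\mu^{s,-})=\BS^{d-1}\}$ is a measurable functional of the underlying hyperbolic line process $\Psi(\lambda\mu^h)$, and it is invariant under the full isometry group $G$ of $\BH^d$, which acts on $\BS^{d-1}$ as its conformal ideal boundary, carrying caps to caps and hence preserving coverings. Since $\mu^h$ is the $G$-invariant Haar measure on $A_h(d,1)$ with $G$ acting transitively and $\mu^h(A_h(d,1))=\infty$, the Poisson process $\Psi(\lambda\mu^h)$ is $G$-ergodic (by the standard ergodicity of Poisson processes driven by an infinite, invariant, ergodic intensity measure). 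Consequently the event has probability $0$ or $1$, and by the preceding paragraph it must equal $1$ for all $\lambda>\lambda_1$. The main obstacle is precisely this last step: rotation invariance on $\BS^{d-1}$ alone does not suffice, since the compact group $SO(d)$ admits nontrivial invariant events for the induced cap process (for instance, the number of caps with radius in a fixed compact sub-interval of $(0,\arccos(1/\cosh 1)]$), and one must therefore argue at the level of the line process in $\BH^d$ using the non-compact group of hyperbolic isometries.
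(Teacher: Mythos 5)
Your steps (1)--(3) are fine: the rotation invariance of $\mu^{s,-}$, the finiteness and positivity of $\nu([\alpha_0,\arccos(1/\cosh 1)])$, and the union bound over a fixed finite cover all go through, and they show $\BP(\CC(\lambda\mu^{s,-})=\BS^{d-1})\ge 1-Me^{-m(\lambda)}>0$ for large $\lambda$. But this single-scale estimate can never by itself give probability one, so the entire weight of your proof rests on the final zero--one law, and that step fails. The event $\{\CC(\lambda\mu^{s,-})=\BS^{d-1}\}$ is \emph{not} invariant under the full isometry group $G$ of $\BH^d$: the cylinder $\cc_L$ and hence the cap $\scap(z(L),\beta(\|x\|))$ are built from the hypersurface $H$ orthogonal to $L$ \emph{through the fixed origin $o$} (see \eqref{eq:DefHatAL} and \eqref{eq:F-}), so the map $L\mapsto\scap(z(L),\beta(\|x\|))$ is equivariant only under the stabilizer of $o$, i.e.\ under $SO(d)$ --- and you yourself note that $SO(d)$-invariance is useless here. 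For $g\in G$ not fixing $o$, the cap attached to $gL$ is not the boundary image of the cap attached to $L$ (the radius $\beta(\|x\|)$ changes), so "carrying caps to caps" does not make the covering event $G$-invariant as a functional of the line process. More decisively, no zero--one law can possibly hold for this event: as observed in the remark following the proof of Theorem \ref{thm:ConnectivityPhaseTransition}, for \emph{every} $\lambda>0$ the event $\{\CC(\lambda\mu^{s,-})=\BS^{d-1}\}$ has positive probability (a few lines very close to $o$ produce caps covering the whole sphere), while for small $\lambda$ its probability is strictly less than one (by Theorem \ref{thm:disconnect} together with the couplings of Section \ref{sec:inducedBool}, $\CC(\lambda\mu^{s,-})$ fails even to be connected with positive probability). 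Hence $0<\BP(\CC(\lambda\mu^{s,-})=\BS^{d-1})<1$ for small $\lambda$, which directly contradicts the dichotomy you invoke.

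The paper avoids this entirely by proving probability one \emph{directly} with a multiscale first-moment argument (Theorem \ref{thm:cover_gen}): for each $n$ one takes a maximal $2^{-n}$-separated set $S_n$ with $|S_n|\asymp 2^{(d-1)n}$ and shows, using the lower bound $g_c(\alpha)\gtrsim\alpha^{-d}$ from Lemma \ref{lemma:measure}, that the intensity of caps containing a fixed cap of radius $2^{-n}$ in a \emph{single} cap grows like $\lambda C(d,c)\,n$. The expected number of uncovered $2^{-n}$-caps is then at most $C(d)2^{(d-1)n}e^{-\lambda C(d,c)n}$, which tends to $0$ as $n\to\infty$ once $\lambda$ beats the entropy $(d-1)\log 2$. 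The essential input you are missing is precisely this use of arbitrarily small scales together with the $\alpha^{-d}$ blow-up of the radius density near $0$; a fixed-scale estimate plus an ergodicity upgrade cannot replace it.
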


\begin{theorem} \label{thm:disconnect}
There exists a constant $\lambda_2>0$ only depending on $d$, such that for all $0<\lambda<\lambda_2$ and $r>{1\over 2}\big(\tan\big({e\pi\over e^2-e+1}\big)\big)^{-1}$ we have that
\[
\BP(\CC(\lambda \mu^{s,+}_{r}) \textrm{ is not connected})>0.
\]
\end{theorem}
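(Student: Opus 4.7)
The strategy is to project the spherical cap process to a Euclidean ball process in $\BR^{d-1}$, compare the latter to the fractal ball model of Section \ref{sec:Notationfracball}, and then invoke Theorem \ref{thm:Cbounded} to produce a vacant sheet that disconnects $\CC(\lambda\mu^{s,+}_r)$ on the sphere.

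First I would fix a point $N\in\BS^{d-1}$ and use the stereographic projection $\pi\colon\BS^{d-1}\setminus\{N\}\to\BR^{d-1}$, which is conformal and sends any spherical cap whose closure omits $N$ bijectively onto a Euclidean ball. The quantitative hypothesis $r>\frac{1}{2}\bigl(\tan(e\pi/(e^2-e+1))\bigr)^{-1}$ is calibrated so that every cap in $\Psi(\lambda\mu^{s,+}_r)$ has spherical radius $\gamma(\|x\|)+2\beta(\|x\|)$ strictly less than some explicit $c_r<\pi$, and this in turn guarantees that with positive probability no cap of the process wraps over a fixed small spherical neighbourhood of $N$. Pushing forward through $\pi$ then yields a Poisson ball process $\Psi_E$ on $\BR^{d-1}$ with some intensity measure $\lambda\mu^E_{\rm proj}$.

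The main technical step is to show that, on bounded subsets of $\BR^{d-1}$, $\mu^E_{\rm proj}$ is dominated, up to a multiplicative constant, by the semi-scale-invariant fractal-ball measure $\mu^E$ from \eqref{eqn:muEdef} in dimension $d-1$. Starting from \eqref{eq_plane_intensity} and the definition \eqref{eq:F+} of $F^+_r$, I would pass to hyperbolic polar coordinates via $\|x\|=\tanh(t/2)$ with $t=d_h(o,x)\in(1,\infty)$. A leading-order expansion of $\gamma(\|x\|)+2\beta(\|x\|)$ as $t\to\infty$ shows that the cap radius $\alpha$ is asymptotically proportional to $e^{-t}$, while the hyperbolic density $\cosh(t)\sinh^{d-2}(t)$ grows like $e^{(d-1)t}$; together with the bounded stereographic distortion, this pushes the radial part of the intensity into a measure that scales like $\rho^{-d}\,d\rho$, matching the scaling of $\mu^E$ on $\BR^{d-1}$. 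A uniform bound on any Euclidean ball $B^{d-1}(o,R)$ then produces a finite $C=C(d,R)$ with $\mu^E_{\rm proj}\leq C\,\mu^E$ on $B^{d-1}(o,R)\times(0,1]$, so that standard Poisson domination yields a coupling of $\Psi_E$ with a fractal-ball process $\Psi(\lambda C\mu^E)$ for which $\CC(\Psi_E)\cap B^{d-1}(o,R)\subseteq\CC(\lambda C\mu^E)$. This uniform estimate, together with the handling of the finitely many projected balls of radius larger than $1$, is what I expect to be the main obstacle.

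Fixing $R$ sufficiently large once and for all, and setting $\lambda_2:=\lambda_{\rm sheet}/C$, for any $\lambda<\lambda_2$ Theorem \ref{thm:Cbounded} implies that $\CC(\lambda C\mu^E)$ has only bounded connected components almost surely, so by scale invariance and ergodicity its vacant complement contains, with positive probability, a connected \emph{sheet} $\Sigma$ separating some $B^{d-1}(o,R_1)$ from the complement of $B^{d-1}(o,R_2)$, with $0<R_1<R_2<R$. By the coupling, $\Sigma$ is also vacant for the cap process, and therefore $\pi^{-1}(\Sigma)$ is a connected subset of $\BS^{d-1}\setminus\CC(\lambda\mu^{s,+}_r)$ separating $\pi^{-1}(B^{d-1}(o,R_1))$ from $\pi^{-1}(\BR^{d-1}\setminus B^{d-1}(o,R_2))\cup\{N\}$. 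Since the scaling $\alpha^{-d}\,d\alpha$ makes the expected number of small caps of $\Psi(\lambda\mu^{s,+}_r)$ contained in each of these two spherical regions infinite, both regions almost surely contain at least one cap of the process. Intersecting this almost sure event with the positive-probability sheet event then yields a positive-probability event on which $\CC(\lambda\mu^{s,+}_r)$ has non-empty portions on both sides of the vacant separator $\pi^{-1}(\Sigma)$, and is therefore not connected.
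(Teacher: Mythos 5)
Your overall strategy is the same as the paper's: push the cap process down to a Euclidean ball process in $\BR^{d-1}$ whose radial intensity scales like $\alpha^{-d}\,\dint\alpha$, dominate it by (a constant multiple of) the semi-scale invariant fractal ball model, handle the finitely many large caps by a positive-probability conditioning, and conclude disconnection from a vacant separating sheet. The only genuinely different choice is the projection: you use stereographic projection (caps map exactly to balls), whereas the paper uses the vertical projection $\Pi_{\rm vert}$ of the upper hemisphere and merely encloses each projected cap in a ball of the same radius (Proposition \ref{prop:inclusion}); both work, and both require restricting to a region where the distortion is bounded and excluding large caps, which the paper does by conditioning on the event $G$ that no cap has radius $\geq C_d$.

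There is, however, one genuine gap in your chain of implications: you deduce the existence, with positive probability, of a vacant connected sheet separating $B^{d-1}(o,R_1)$ from the complement of $B^{d-1}(o,R_2)$ from Theorem \ref{thm:Cbounded} (``all components of $\CC(\lambda C\mu^E)$ are bounded a.s.''). That implication does not follow: a crossing of the annulus $\CA(R_1,R_2)$ only requires a single \emph{bounded} component touching both boundaries, and since infinitely many components (of ever larger diameter) may meet a fixed compact set, ``all components bounded'' is compatible with every annulus being crossed almost surely. What you actually need is the strictly stronger annulus non-crossing estimate $\BP(\Cr(\CA(a,b),\CC(\lambda\mu^E)))<1$, which is Theorem \ref{thm:anncrossings} (used in the paper in the rescaled form of Corollary \ref{corr:anncrossings}); the paper derives Theorem \ref{thm:Cbounded} \emph{from} that estimate, not the other way around. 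The annular separation is essential here and cannot be replaced by a sheet around a single component, because components of the spherical set could otherwise reconnect through the neighbourhood of your projection point $N$; so the argument must be rerouted through Theorem \ref{thm:anncrossings} (or its proof via the Mandelbrot fractal percolation comparison of \cite{O96}). With that substitution, and with the large-cap event made precise as in the paper's event $G$, your proof goes through.
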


In order to complete the proof of Theorem
 \ref{thm:ConnectivityPhaseTransition} we need to prove the 
 monotonicity in 
 $ \lambda  $ of the probability for $\CC(\lambda \mu^h) $ being connected. This is formulated in the subsequent lemma.
\begin{lemma}
Let $ \lambda $ be such that $\BP( \CC(\lambda \mu^h) \text{ is connected})=1 $. Then,
\[
\BP( \CC(\lambda' \mu^h) \text{ is connected})=1 
\]
 for every $\lambda'> \lambda $.
\end{lemma}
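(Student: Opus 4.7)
The plan is to build $\Psi(\lambda'\mu^h)$ from $\Psi(\lambda\mu^h)$ via Poisson superposition and to show that every newly added cylinder is almost surely attached to the already-connected covered set. Concretely, couple $\Psi(\lambda'\mu^h)=\Psi_1\sqcup\Psi_2$, where $\Psi_1\sim\Psi(\lambda\mu^h)$ and $\Psi_2\sim\Psi((\lambda'-\lambda)\mu^h)$ are independent, and write $A_1:=\bigcup_{L\in\Psi_1}\cc_L$. By hypothesis, $A_1$ is connected almost surely. Since every cylinder is itself connected, $\CC(\lambda'\mu^h)=A_1\cup\bigcup_{L\in\Psi_2}\cc_L$ will be connected as soon as each $\cc_L$ with $L\in\Psi_2$ meets $A_1$.

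The key intermediate claim is: for every \emph{fixed} line $L_0\in A_h(d,1)$, one has $\BP(\cc_{L_0}\cap A_1\neq\emptyset)=1$. To see this, note that by taking $L'=L_0$ in \eqref{eq:DefHatAL} one has $L_0\subset\cc_{L_0}$, and similarly $L'\subset\cc_{L'}$ for any $L'\in A_h(d,1)$. Hence any $L'\in\Psi_1$ with $L'\cap L_0\neq\emptyset$ contributes a cylinder that meets $\cc_{L_0}$. By the void probability formula for Poisson processes, the claim therefore reduces to
\[
\mu^h\bigl(\{L'\in A_h(d,1):L'\cap L_0\neq\emptyset\}\bigr)=\infty.
\]
This follows from a hyperbolic Crofton/Santal\'o-type identity: the $\mu^h$-measure of lines crossing a geodesic segment of hyperbolic length $\ell$ is proportional to $\ell$ (which can be read off the parametrization \eqref{eq_plane_intensity}), and $L_0$ being bi-infinite can be exhausted by segments of arbitrary length.

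To finish, condition on $\Psi_2$, which is almost surely countable and independent of $\Psi_1$. Applying the intermediate claim to each $L\in\Psi_2$ and taking a countable union of null events shows that almost surely every cylinder generated by $\Psi_2$ meets $A_1$, and hence $\CC(\lambda'\mu^h)$ is almost surely connected. The main obstacle is the integral-geometric step $\mu^h(\{L':L'\cap L_0\neq\emptyset\})=\infty$: the statement is geometrically transparent but requires either a short computation from \eqref{eq_plane_intensity} (fixing $u_H$ and integrating over those base points $x\in H$ whose perpendicular line hits $L_0$) or a direct appeal to the Crofton formula in \cite{Santalo}.
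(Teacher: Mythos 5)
Your overall strategy (superposition plus showing that each cylinder of the independent complement $\Psi_2$ almost surely attaches to the connected set $A_1$, followed by a countable union bound) is exactly the paper's strategy, and the reduction to a void-probability computation for a fixed line $L_0$ is also how the paper proceeds. The problem is the integral-geometric step itself: the claim $\mu^h\bigl(\{L'\in A_h(d,1):L'\cap L_0\neq\emptyset\}\bigr)=\infty$ is false for $d\geq 3$; in fact that set has $\mu^h$-measure \emph{zero}. Two one-dimensional geodesics in $\BH^d$ intersect only under a codimension-$(d-2)$ coincidence: in the parametrization \eqref{eq_plane_intensity}, for a fixed orthogonal hypersurface $H'$ the lines $L(H',x')$ meeting $L_0$ are exactly those whose base point $x'$ lies on the orthogonal projection of $L_0$ onto $H'$, a one-dimensional curve inside the $(d-1)$-dimensional $H'$, which is $\CH^{d-1}$-null once $d\geq 3$. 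The Crofton identity you invoke (measure of lines meeting a set proportional to its size) applies to lines meeting $(d-1)$-dimensional hypersurfaces, not curves; your computation is fine for $d=2$ but the lemma is needed for all $d$, so the void probability you compute is $e^{0}=1$ rather than $0$ in higher dimensions.

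The fix, and what the paper actually does, is to exploit the fatness of the cylinders rather than intersections of the underlying lines. Replace the event $\{L'\cap L_0\neq\emptyset\}$ by $\{\cc_{L'}\cap\cc_{L_0}\neq\emptyset\}$ and exhibit a set of lines $L'$ of infinite $\mu^h$-measure realizing it: by Lemma \ref{lm:scaps} the cylinder $\cc_{L_0}$ induces a spherical cap of positive $\sigma_{d-1}$-measure on $\BS^{d-1}$, and every line $L'=L(H',y)$ whose base point $y$ lies in the corresponding solid sector of $\BB^d$ and is far enough from the origin has its own endcap meeting that cap, whence $\cc_{L'}\cap\cc_{L_0}\neq\emptyset$. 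That sector minus a bounded ball has infinite $\mu^h$-measure, so the void probability vanishes. With this substitution the remainder of your argument goes through verbatim.
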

\begin{proof}
Let $\tilde{\lambda}=\lambda'-\lambda$ and consider the Poisson  processes $\Psi(\tilde{\lambda}\mu^h)$ and $ \Psi(\lambda\mu^h) $
which we assume are independent, and defined on the same probability 
space. By the superposition property of Poisson  processes (see \cite[Theorem 3.3]{LP18}), it follows
that $ \Psi(\lambda\mu^h) \cup \Psi(\tilde{\lambda}\mu^h)$ has 
the same distribution as $\Psi(\lambda'\mu^h)$. Since the number of 
hyperbolic lines in $\Psi(\tilde{\lambda}\mu^h)$ is a.s.\ countable, the 
statement follows from the union bound if we can show that for any fixed line 
$L\in A_h(d,1),$ we have that 
$\BP(L\cap \CC(\lambda \mu^h)\neq \emptyset)=1.$

To this end, fix some $ L=L(H,x) \in A_h(d,1)$.  By Lemma \ref{lm:scaps},
the cylinder $\cc_L$ induces a cap 
$ \operatorname{cap}(z(L), \beta(\|x\|))\subset\BS^{d-1}$ 
which has positive $\sigma_{d-1}$-measure, see also Lemma
\ref{lem:GammaBetaComparison} below. We let $ \operatorname{CS}(z(L), \beta(\|x\|))$ be
the sector of $ \mathbb{B}^d $ whose boundary is given 
by $ \operatorname{cap}(z(L),\beta(\|x\|)) $, namely
$\operatorname{CS}(z(L), \beta(\|x\|))
=\Pi_{\rm rad}^{-1}\left(\operatorname{cap}(z(L),\beta(\|x\|))\right)\cup\{o\}.$
According to \eqref{eq_transform}, we have that for every 
$ L'=L'(H',y)\in A_{h}(d,1)$ with 
$ y \in  \operatorname{CS}(z(L), \beta(\|x\|))$  and 
$ d_h(o, y) > \operatorname{arcsinh}\big( \cot(\frac{\beta(\|x\|)}{2})\big):= \delta(\|x\|)$, one of the caps induced by $ L'  $ 
intersects $ \operatorname{cap}(z(L), \beta(\|x\|))  $. For 
the associated cylinders $ \cc_L $ and $ \cc_{L'} $ it then holds 
that $ \cc_L \cap \cc_{L'}\neq \emptyset $. With this in consideration, 
we can now conclude that for every 
$ L=L(H,x) \in A_h(d,1) $ it holds that 
	\begin{align*}
	&\BP(L(H,x) \cap   \CC(\lambda \mu^h)= \emptyset ) \\
	&\quad \leq \BP(\vert\{L'(H',y)\in \Psi(\lambda\mu^h): y \in \operatorname{CS}(z(L), \beta(\|x\|))\backslash B^d(o,\delta(\|x\|))\} \vert=0)\\
	&\quad =\exp\big(-\lambda \mu^h(L'(H',y): y \in \operatorname{CS}(z(L), \beta(\|x\|))\backslash B^d(o,\delta(\|x\|))\big)\\
	&\quad =0,
	\end{align*}
where we used the easily verified fact that 
\[
\mu^h(L'(H',y): y \in \operatorname{CS}(z(L), \beta(\|x\|))\backslash B^d(o,\delta(\|x\|)))=\infty.
\]
This proves the claim.
\end{proof}
\begin{proof}[Proof of Theorem \ref{thm:ConnectivityPhaseTransition}
(using Theorem \ref{thm:cover} and Theorem \ref{thm:disconnect})]
First, we consider the connectivity regime. Let  $\cc_{L_a}$ and 
$\cc_{L_b}$ with $L_a=L(H_a,x_a)$, $L_b=L(H_b,x_b)$ be any pair of 
(fixed) cylinders. Consider also the coupling of 
$\Psi(\lambda \mu^{s,-})$ 
and $\Psi(\lambda\mu^h)$ from Lemma \ref{lm:coupling}.
According to Theorem \ref{thm:cover} we have that for all 
$\lambda<\infty$ large enough,
$\CC(\lambda \mu^{s,-})=\BS^{d-1}$ with probability one. For such 
$\lambda$, it follows that there exists an integer $n\in\mathbb{N}$ and a sequence of ``endcaps'' $\scap(s_1(L_k),\beta(\|x_k\|))$, $0\leq k\leq n+1$,
such that 
\[
\scap(s_1(L_k),\beta(\|x_k\|))\cap \scap(s_1(L_{k+1}),\beta(\|x_{k+1}\|)) \neq \emptyset,
\]
for $0\leq k\leq n$ and $L_0=L_a$, $L_{n+1}=L_b$, $\scap(s_1(L_k),\beta(\|x_k\|))\in \Psi(\lambda \mu^{s,-})$ for $1\leq k\leq n$. Clearly, by Lemma \ref{lm:coupling} it follows that 
also the  corresponding cylinders $\cc_{L_a}, \cc_{L_1},\ldots,\cc_{L_n},\cc_{L_b}$ with $\cc_{L_k}\in \Psi(\lambda\mu^h)$ for $1\leq k\leq n$ form a 
connected sequence, and so $\cc_{L_a}$ and $\cc_{L_b}$ belong to the 
same connected component. 

From this we claim that it follows that $\CC(\lambda \mu^h)$ 
is connected with probability $1$. 
To see this, let 
$\cc_L \stackrel{\Psi(\lambda\mu^h)}{\longleftrightarrow} \cc_{L'}$
denote the event that $\cc_{L}$ and $\cc_{L'}$ are connected by using
other cylinders associated to the lines in $\Psi(\lambda\mu^h).$
We let $\mathcal{E}$ be the event that 
$\cc_L \stackrel{\Psi(\lambda\mu^h)}{\longleftrightarrow} \cc_{L'}$
for every $L,L'\in \Psi(\lambda\mu^h),$ i.e. that 
$\CC(\lambda \mu^h)$ is connected. Then, we can use 
the multivariate Mecke formula \cite[Theorem 4.4]{LP18} for 
Poisson  processes to see that
the probability for the complementary event $\mathcal{E}^c$ is bounded by
\begin{align*}
\mathbb{P}(\mathcal{E}^c)& \leq {1\over 2}
\BE\left[\sum_{(L,L')\in\Psi^2(\lambda\mu^h)\atop L\neq L'}
{\bf 1}\left(\left\{
\cc_L \stackrel{\Psi(\lambda\mu^h)}{\longleftrightarrow} \cc_{L'}
\right\}^c\right)
\right]\\
&={\lambda^2\over 2}\int_{A_h(d,1)}\int_{A_h(d,1)}
\BP\left(\left\{
\cc_L \stackrel{\Psi(\lambda\mu^h)\cup\{L,L'\}}{\longleftrightarrow} \cc_{L'}
\right\}^c\right)
\mu^h(\dint L)\mu^h(\dint L').
\end{align*}
However, from the above considerations it follows that the probability under the integral signs is zero for all $L,L'\in A_h(d,1)$, implying that $\mathbb{P}(\mathcal{E}^c)=0$ and, finally, $\mathbb{P}(\mathcal{E})=1$. As a result, $\CC(\lambda \mu^h)$ is connected with probability $1$.

We now prove that for $\lambda>0$ small enough, 
$\CC(\lambda\mu^h)$ is not connected with positive probability.
To that end we start by noting that due to the coupling \eqref{eqn:couplingcc+} connectedness of
$$
\CC_r(\lambda\mu^h):=\bigcup_{L\in\Psi(\lambda \mu^h)\cap A^r}\cc_L
$$
implies connectedness of $\CC(\lambda \mu^{s,+}_{r})$. Thus,
\begin{align*}
&\BP(\CC(\lambda\mu^h)\textrm{ is not connected})\\
&\ge\BP(\CC_r(\lambda\mu^h)\textrm{ is not connected}, \Psi(\lambda \mu^h)\cap (A_h(d,1)\setminus A^r)=\emptyset)\\
&= \BP(\CC(\lambda \mu^{s,+}_{r})\textrm{ is not connected})\,\BP(\Psi(\lambda \mu^h)\cap (A_h(d,1)\setminus A^r)=\emptyset),
\end{align*}
where in the last step we used the independence property of Poisson  processes. Since
\begin{align*}
\BP(\Psi(\lambda \mu^h)\cap (A_h(d,1)\setminus A^r)=\emptyset)&=\exp\big(-\lambda \mu^h(\{L(H,x)\in A_h(d,1)\colon \|x\|\leq r\})\big)\\
&=\exp\Big(-{\lambda\omega_{d-2} \over d-1}\Big({2r\over 1-r^2}\Big)^{d-1}\Big )>0,
\end{align*}
for any $r\in[0,1)$, taking some $r>{1\over 2}\big(\tan\big({e\pi\over e^2-e+1}\big)\big)^{-1}$ and applying Theorem \ref{thm:disconnect} we conclude that
$$
\BP(\CC(\lambda\mu^h)\text{ is not connected})>0,
$$
for all $\lambda>0$ small enough.
\end{proof}

\begin{remark} As stated in the introduction (see Remark \ref{rem:1}), the result 
of part (i) of Theorem \ref{thm:ConnectivityPhaseTransition} is the
best possible. The reason is that even for very low intensities, there
is positive probability that there can be a few lines 
$L\in\Psi(\lambda \mu^h)$ having a `small' distance to the origin 
$o$. In such a situation, the spherical caps 
${\rm cap}(z(L),\beta(\| x\|))\subset\mathbb{S}^{d-1}$ 
associated with the cylinders corresponding to these lines 
(compare with Lemma \ref{lm:scaps}) cover the entire sphere 
$\BS^{d-1}$ and the union set $\CC(\lambda\mu^h)$ 
is necessarily connected. More precisely, let us fix some $R\in(0,\pi)$ and consider a covering of the sphere $\mathbb{S}^{d-1}$ by spherical caps of radius $R$. It is easy to show that the sphere can be completely covered by a finite number $M(R)$ of caps $\scap_i$, $1\leq i\leq M(R)$. Further for any spherical cap $\scap(u,2R)$ with $u\in\scap_i$ we have $\scap_i\subset \scap(u,2R)$. Thus, by Lemma \ref{lm:coupling} we see that $\CC(\lambda\mu^h)$ is connected if $\CC(\lambda \mu^{s,-})=\mathbb{S}^{d-1}$, which in turn happens if $\Psi(\lambda \mu^{s,-})\cap (\scap_i\times [2R,\pi))\neq \emptyset$ for all $1\leq i\leq M(R)$. Due to Lemma \ref{lemma:measure} below, the latter happens with positive probability for any $\lambda>0$.
On the other hand, it is only if 
$\lambda<\lambda_c$ that the union set can be disconnected with 
positive probability.

\end{remark}

\section{Boolean model of spherical caps} \label{sec:Boolcaps}

The proofs of Theorems \ref{thm:cover} and \ref{thm:disconnect} will proceed by 
analyzing the Poisson  processes $\Psi(\lambda \mu^{s,+})$ and 
$\Psi(\lambda \mu^{s,-})$ of Section \ref{sec:inducedBool}. However, 
instead of analyzing these processes directly, we will introduce an additional family of Poisson  processes, giving rise to processes of spherical caps, having slightly larger or smaller spherical radii than the caps produced by $\Psi(\lambda \mu^{s,+})$ and 
$\Psi(\lambda \mu^{s,-})$, respectively. The latter in combination with coupling arguments will allow us to treat $\Psi(\lambda \mu^{s,+})$ and 
$\Psi(\lambda \mu^{s,-})$ simultaneously in a unified way, and this will simplify the arguments.
The key to making this rigorous will lie in understanding the difference between the radius 
$\beta(\Vert x \Vert )$ and the length of the projection $\gamma(\|x\|).$ 
Therefore, we start with the following lemma.

\begin{lemma}\label{lem:GammaBetaComparison}
For any $x\in\BB^d$ with  $\|x\|\in [0,1)$ we have
$$
{2\over \pi}\arccos\big(\cosh^{-1}(1)\big)\gamma(\|x\|)\leq \beta(\|x\|)\leq \sinh(1)\gamma(\|x\|).
$$
\end{lemma}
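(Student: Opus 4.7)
The plan is to reparametrize both $\beta$ and $\gamma$ by the hyperbolic distance $t := d_h(o,x) \in [0,\infty)$ using $\|x\|^2 = (\cosh t - 1)/(\cosh t + 1)$. From \eqref{eq_transform} one already has $\gamma(\|x\|) = \operatorname{arccot}(\sinh t)$, so the first substantial step is to derive the analogous clean identity
$$
\tan\beta(\|x\|) \;=\; \frac{\sinh 1}{\cosh t}.
$$
Substituting $1-\|x\|^2 = 2/(\cosh t+1)$ and $1+\|x\|^2 = 2\cosh t/(\cosh t+1)$ into \eqref{eq_12_06_2} collapses $a(\|x\|)$ to $(N-\cosh 1)/(\cosh t-1)$ with $N:=\sqrt{\sinh^2 1 + \cosh^2 t}$. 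The identity $(N-\cosh 1)(N+\cosh 1) = N^2-\cosh^2 1 = \sinh^2 t$ then kills the $(\cosh t-1)$-denominators in the argument of $\arccos$ appearing in Lemma \ref{lm:scaps}, leaving the surprisingly clean $\cos\beta = \cosh t/N$, which is equivalent to the displayed formula.

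With both quantities in this form, I next eliminate $t$ altogether. From $\tan\gamma = 1/\sinh t$ one reads off $\sin\gamma = 1/\cosh t$, which turns the identity above into
$$
\beta(\|x\|) \;=\; \arctan\!\bigl(\sinh 1 \cdot \sin\gamma(\|x\|)\bigr).
$$
Writing $s := \sinh 1$ and $\alpha := \gamma(\|x\|) \in (0,\pi/2]$, the lemma reduces to the purely analytic one-variable claim
$$
\tfrac{2}{\pi}\arctan(s)\cdot \alpha \;\leq\; \arctan(s\sin\alpha) \;\leq\; s\alpha \qquad \text{for all } \alpha \in (0,\pi/2].
$$

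The upper bound is then immediate from $\arctan y \leq y$ and $\sin\alpha \leq \alpha$. For the lower bound, set $H(\alpha) := \arctan(s\sin\alpha)$. Since $\alpha \mapsto s\sin\alpha$ is concave and increasing on $[0,\pi/2]$ and $\arctan$ is concave and increasing on $[0,\infty)$, the composition $H$ is concave on $[0,\pi/2]$ with $H(0)=0$, so $H(\alpha)/\alpha$ is non-increasing on $(0,\pi/2]$. Hence
$$
\frac{H(\alpha)}{\alpha} \;\geq\; \frac{H(\pi/2)}{\pi/2} \;=\; \frac{2}{\pi}\arctan(s) \;=\; \frac{2}{\pi}\arccos(1/\cosh 1),
$$
and the lower bound follows. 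The hard part will be the opening algebraic reduction to $\tan\beta = \sinh 1 /\cosh t$; once this clean form is in hand, everything else collapses to elementary manipulations of $\arctan$ together with a short concavity argument.
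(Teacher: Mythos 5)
Your proof is correct, and it takes a genuinely different route from the paper's. The paper never simplifies the closed-form $\arccos$ expression from Lemma \ref{lm:scaps}; instead it re-derives $\beta$ geometrically as a supremum of spherical distances, obtaining $\widetilde\beta(h)=\tfrac12\big(\widetilde\gamma(h-1)-\widetilde\gamma(h+1)\big)$ for $h>1$ and a separate formula for $h\in[0,1]$, then asserts (with the justification left as ``easy to check'') that $\widetilde\beta(h)/\widetilde\gamma(h)$ is increasing, and finally evaluates the two extremes, the $h\to\infty$ one via l'H\^opital. You instead push the algebra through the formula of Lemma \ref{lm:scaps} directly: I verified that $a(\|x\|)$ indeed collapses to $(N-\cosh 1)/(\cosh t-1)$ with $N=\sqrt{\sinh^2 1+\cosh^2 t}$, that $\cos\beta=\cosh t/N$, and that this is consistent with the paper's case-split formulas (e.g.\ both give $\tan(2\beta)=2\sinh(1)\cosh(t)/(\cosh^2 t-\sinh^2 1)$). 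The resulting identity $\tan\beta=\sinh(1)\sin\gamma$ is a clean closed form the paper never states, and it buys you a uniform treatment with no case split at $h=1$, no l'H\^opital computation, and a fully justified monotonicity step: the concavity of $\alpha\mapsto\arctan(\sinh(1)\sin\alpha)$ (composition of a concave increasing function with a concave function) immediately gives that $H(\alpha)/\alpha$ is non-increasing, which is exactly the ratio-monotonicity the paper needs. The endpoint identification $\arctan(\sinh 1)=\arccos(1/\cosh 1)$ is also correct. In short, your argument is tighter and more self-contained than the paper's at the one step the paper glosses over.
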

\begin{proof}
If we use the change of variables $h:=d_h(o,x)$, then we get from 
\eqref{eq:EuclideanHyperbolicDist} that
$$
\|x\|=\sqrt{{\cosh(h)-1\over \cosh(h)+1}},\qquad h\in[0,\infty).
$$
Further we note that by \eqref{eq_transform},
$$
\gamma(\|x\|)=\widetilde\gamma(h):=\arccot\big(\sinh(h)\big).
$$ 
Since $\beta(\Vert x \Vert)$ is the radius of the spherical cap $\scap(z(L),\beta(\Vert x \Vert))$
where $L=L(H,x),$ we have (using Lemma \ref{lm:scaps}) that for any 
$u\in \scap(z(L),\beta(\Vert x \Vert))$ there exists $y\in H$ with $d_h(y,x)<1$ and 
$s_1(L(H,y))=u$. Therefore, by using the representation \eqref{eq_s}, we obtain
\begin{align*}
2\beta(\|x\|)&=\sup_{u_1,u_2\in \scap(z(L),\beta(\Vert x \Vert))}d_s(u_1,u_2)\\
&=\sup_{\substack{y_1,y_2\in H\colon\\ d_h(y_i,x)<1, i=1,2}}\arccos(\cos\gamma(\|y_1\|)\cos\gamma(\|y_2\|)\langle u_{y_1},u_{y_2}\rangle +\sin\gamma(\|y_1\|)\sin\gamma(\|y_2\|)),
\end{align*}
where we recall that $u_y=y/\|y\|$. Using the geometrical interpretation of $\beta(\|x\|)$ as the radius of the spherical cap $ \{s_1(L')\colon L'\in \hat{A}_L\}$ we conclude that the points $y_1,y_2$, for which the supremum in the above expression is achieved, must satisfy $d_h(y_1,x)=d_h(y_2,x)=1$. This is equivalent to the fact that the corresponding hyperbolic lines $L_1=L(H,y_1)$ and $L_2=L(H,y_2)$ belong to the boundary of the closure of cylinder $\cc_L$ and, hence, $s_1(L_1)$, $s_1(L_2)$ belong to the boundary of the closure of the spherical cap $\scap(z(L),\beta(\|x\|))$. At the same time among all points $y_1,y_2\in H$ with $d_h(y_1,x)=d_h(y_2,x)=1$ the supremum in the above expression is achieved whenever $s_1(L_1)$ and $s_1(L_2)$ lie on diameter of a spherical cap $\scap(z(L),\beta(\|x\|))$, which in particular holds if $\langle u_{y_1},u_{y_2}\rangle =1$, in case when $h=d_h(0,x)$ satisfies $h>1$, and if $\langle u_{y_1},u_{y_2}\rangle =-1$ for $h\in [0,1]$. Combining these observations leads to
$$
\beta(\|x\|)={1\over 2}\sup_{\substack{y_1,y_2\in H\colon\\ d_h(y_i,x)=1, i=1,2\\
\langle u_{y_1},u_{y_2}\rangle=1}}\arccos(\cos(\gamma(\|y_2\|)-\gamma(\|y_1\|)))={1\over 2}\big(\widetilde\gamma(h-1)-\widetilde\gamma(h+1)\big)=\widetilde\beta(h),
$$
for $h>1$ and 
\begin{align*}
\beta(\|x\|)& ={1\over 2}\sup_{\substack{y_1,y_2\in H\colon\\ d_h(y_i,x)=1, i=1,2\\
\langle u_{y_1},u_{y_2}\rangle=-1}}\arccos(-\cos(\gamma(\|y_2\|)+\gamma(\|y_1\|)))\\
& ={1\over 2}\big(\pi-\widetilde\gamma(h+1)-\widetilde\gamma(1-h)\big)=\widetilde\beta(h),
\end{align*}
for $h\in[0,1]$. It is easy to check that the function $h\mapsto$
$\widetilde\gamma(h)/\widetilde\gamma(h+1)$ is increasing for $h>0$. As a consequence,
$\widetilde\beta(h)/\widetilde\gamma(h)$ is increasing for $h>1$ as well. In the same 
way we see that the function $\widetilde\beta(h)/\widetilde\gamma(h)$ is 
also increasing for $h\in[0,1]$.
Thus, since we set $\gamma(0)={\pi\over 2}$ and $\beta(0)=\arccos(\cosh^{-1}(1))$ we have
$$
{2\over \pi}\arccos\big(1/\cosh(1)\big)={\beta(0)\over \gamma(0)}\leq {\beta(\|x\|)\over \gamma(\|x\|)}={\widetilde\beta(h)\over \widetilde\gamma(h)}\leq \lim_{h\to\infty}{\widetilde\beta(h)\over \widetilde\gamma(h)},
$$
if $\|x\|\in[0,1)$. In order to compute the limit we will use l'Hospital rule. Using \eqref{eq_transform} we get
\begin{align*}
\lim_{h\to\infty}{\widetilde\beta(h)\over \widetilde\gamma(h)}&={1\over 2}\lim_{h\to\infty}{\arccot\big(\sinh(h-1)\big)-\arccot\big(\sinh(h+1)\big)\over \arccot\big(\sinh(h)\big)}\\&={1\over 2}\lim_{h\to\infty}{-\cosh(h-1)^{-1}+\cosh(h+1)^{-1}\over -\cosh(h)^{-1}}\\
&=\sinh(1)\lim_{h\to\infty}{\sinh(h)\cosh(h)\over \cosh(h-1)\cosh(h+1)}=\sinh(1).
\end{align*}
This completes the argument.
\end{proof}

The proofs of both Theorem \ref{thm:cover} and Theorem \ref{thm:disconnect} 
will utilize Lemma \ref{lem:GammaBetaComparison} in order to 
compare $\Psi(\lambda \mu^{s,+})$ and $\Psi(\lambda \mu^{s,-})$ to a
spherical cap processes, where the radii 
will be given by $\big\{c\gamma(\|x\|)\colon L(H,x)\in \Psi(\lambda\mu_h)\big\}$. More precisely for a given constant $c>0$ define the set
$$
A_c:=\left\{L(H,x)\in A_h(d,1)\colon \gamma(\|x\|)< {\pi\over \max(c,2)}\right\}.
$$
This set consists of all lines $L(H,x)\in A_h(d,1)$, such that the set $\scap(\Pi_{\rm rad}(x),c\gamma(\|x\|))$ is a spherical cap, namely it has spherical radius at most $\pi$ and, hence, does not ``wrap around'' the 
entire sphere. 
Further consider the measurable mapping 
\begin{equation}\label{eq:Ftilde}
\begin{aligned}
\tilde{F}_{c,f}: A_c &\to \BS^{d-1}\times  (0,\pi],\\
L(H,x)&\mapsto \Big(\cos\big(f(\|x\|)\big)\Pi_{\rm rad}(x)+\sin\big(f(\|x\|)\big)u_{H}, c\gamma(\|x\|)\Big).
\end{aligned}
\end{equation}
where $f:[0,1]\mapsto [0,\pi/2]$ is some measurable map. By the mapping property of Poisson  processes \cite[Theorem 5.1]{LP18} we conclude that 
$\Psi(\lambda\mu_{c,f}^s):=\tilde{F}_{c,f}(\Psi(\lambda \mu^h)\cap A_c)$ is a 
Poisson  process on $\BS^{d-1}\times (0,\pi]$ with intensity measure 
\begin{equation}\label{eq:MuFC}
\lambda\mu^{s}_{c,f}:= \tilde{F}_{c,f}(\lambda\mu^h|_{A_c}).
\end{equation}

For future reference we note that by taking $f(\|x\|)=f_1(\|x\|):=\gamma(a(\|x\|)\|x\|)$ where $a(\|x\|)$ is the function of $\|x\|$ defined in \eqref{eq_12_06_2} and taking $c={1\over 2}\arccos(\cosh^{-1}(1))$ from the lower bound in Lemma \ref{lem:GammaBetaComparison}, there is a natural coupling of the processes $\Psi(\lambda \mu^{s,-})$ and $\Psi(\lambda\mu_{c,f_1}^s)$ such that almost surely
\begin{equation} \label{eqn:C-inclusion}
\CC(\lambda\mu^{s}_{c,f_1})\subset\CC(\lambda \mu^{s,-}).
\end{equation}
Indeed, this is possible since both processes may be constructed as an image of the same realization of $\Psi(\lambda\mu_h)$ under mappings $F^{-}$ in \eqref{eq:F-} and $\tilde F_{c,f_1}$ in \eqref{eq:Ftilde}, respectively. Observe also that by \eqref{eq_center_cap}, our 
choice of the function $f_1$ and the constant $c$, the right hand side of \eqref{eq:Ftilde} becomes 
$(z(L),{1\over 2}\arccos(\cosh^{-1}(1))\gamma(\|x\|)).$
Then, inclusion holds due to Lemma \ref{lem:GammaBetaComparison}.
On the other hand taking $f(\|x\|)=f_2(\|x\|)=0$ and $c_0=2\sinh(1)+1=e-e^{-1}+1$, by Lemma \ref{lem:GammaBetaComparison} we obtain a natural coupling of the processes $\Psi(\lambda\mu_{r}^{s,+})$ and 
$\Psi(\lambda\mu_{c_0,f_2}^s)$ such that almost surely
\begin{equation}\label{eqn:C+inclusion}
\CC(\lambda \mu^{s,+}_{r})\subset\CC(\lambda\mu^{s}_{c_0,f_2}),
\end{equation}
for any $r>r_1$, ensuring that $A^r\subset A_{c_0}$. In particular, 
the latter holds for
$
r_1={1\over 2}\big(\tan\big({e\pi\over e^2-e+1}\big)\big)^{-1}.
$
Again the coupling is possible since with this choice of $r$ and $c_0$ both processes may be constructed as an image of the same realization of $\Psi(\lambda\mu_h)$ under mappings $F^{+}_r$ in \eqref{eq:F+} and $\tilde F_{c_0,f_2}$ in \eqref{eq:Ftilde}, respectively, and according to Lemma \ref{lem:GammaBetaComparison} we have $2\beta(\|x\|)+\gamma(\|x\|)\leq 
c_0\gamma(\|x\|)$. 

In the next lemma we determine the intensity measure 
$\lambda\mu^{s}_{c,f}$ of $\Psi(\lambda\mu_{c,f}^s)$, which turns out to be independent of the particular choice of function $f$. On one hand, this is natural to expect, since according to the definition of $\tilde F_{c,f},$ the function $f$ influences the density of the measure $\mu^{s}_{c,f}$ only with respect to the first component, while due to the rotation invariance of $\mu^h$ it is to be expected that $\mu^{s}_{c,f}$ will also be rotationally invariant with respect to the first component. On the other hand, the function $f$ depends additionally on $\|x\|$, which apriori makes the two components of the measure $\mu^{s}_{c,f}$ dependent on each other.

\begin{lemma} \label{lemma:measure}
For any $c>0$ and any measurable function $f:[0,1]\mapsto [0,\pi/2]$ the intensity measure $\lambda\mu^{s}_{c,f}$ of the Poisson 
point process $\Psi(\lambda\mu^{s}_{c,f})$ is of the form
$$
\mu^{s}_{c,f}(A\times B)
=\sigma_{d-1}(A)\int_{0}^{c\pi/\max(2,c)}g_c(\alpha){\bf 1}\{\alpha\in B\}\dint \alpha,
$$
where $A\in\CB(\BS^{d-1})$ and $B\in \CB( (0,\pi])$ are Borel sets and
\begin{equation}\label{eq:gcexact}
g_c(\alpha)=c^{-1}{\cos(c^{-1}\alpha)^{d-2}\over \sin(c^{-1}\alpha)^d}.
\end{equation}
In addition, the density function $g_c$ satisfies
\begin{equation}\label{eq:gcestimate}
c^{d-1}\cos(\alpha_0/c)^{d-2}\alpha^{-d}\leq g_c(\alpha)\leq c^{d-1}\Big({\pi\over 2}\Big)^d\alpha^{-d},
\end{equation}
for any $\alpha\in (0,\alpha_0]$ and $0<\alpha_0<c\pi/\max(2,c)$.
\end{lemma}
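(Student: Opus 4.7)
The plan is to apply the mapping theorem \cite[Theorem 5.1]{LP18} to identify $\mu^{s}_{c,f}$ with the pushforward $\tilde F_{c,f}(\mu^h|_{A_c})$, and then compute this pushforward in two steps: first establishing the claimed product structure with uniform marginal on $\BS^{d-1}$ (the $f$-independence being a consequence), and then evaluating the radial density $g_c$ through a direct trigonometric change of variables. The bounds in \eqref{eq:gcestimate} will follow by elementary estimates on the resulting explicit formula.

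For the product structure, I will appeal to the rotation invariance of $\mu^h$ under every isometry $R\in SO(d)$ of $\BH^d$ fixing the origin $o$. Such an $R$ sends $L(H,x)$ to $L(RH,Rx)$, and the first component of $\tilde F_{c,f}$ transforms equivariantly, becoming $\cos(f(\|x\|))\,R\Pi_{\rm rad}(x)+\sin(f(\|x\|))\,R u_H$ (using $\Pi_{\rm rad}(Rx)=R\Pi_{\rm rad}(x)$ and $u_{RH}=Ru_H$), whereas the second component $c\gamma(\|x\|)$ depends only on $\|Rx\|=\|x\|$. Hence $\mu^{s}_{c,f}$ is invariant under rotations in the first coordinate, and a standard disintegration argument produces the desired form $\mu^{s}_{c,f}(A\times B)=\sigma_{d-1}(A)\int g_c(\alpha)\mathbf{1}\{\alpha\in B\}\,\dint\alpha$ for some measurable $g_c$. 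This simultaneously explains the $f$-independence asserted in the statement: the choice of $f$ only rotates the first coordinate within each fixed radial shell $\{c\gamma(\|x\|)=\alpha\}$, so it is averaged out by the uniform marginal.

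To obtain $g_c$ explicitly, I will take $A=\BS^{d-1}$ and compute $\mu^{s}_{c,f}(\BS^{d-1}\times B)$ directly from \eqref{eq_plane_intensity}. Introducing hyperbolic polar coordinates $(r,v)$ on the $(d-1)$-dimensional hyperplane $H$ with $r=d_h(o,x)$ and $v\in H\cap\BS^{d-1}$, one has $\CH^{d-1}(\dint x)=\sinh^{d-2}(r)\,\dint r\,\sigma_{d-2}(\dint v)$, and the integrand depends only on $r$. The angular integrations in $u_H\in\BS^{d-1}$ and $v\in H\cap\BS^{d-1}$ contribute constants (namely $\omega_d$ and $\omega_{d-1}$), reducing the problem to a one-dimensional integral in $r$. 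Applying the substitution $\alpha=c\gamma(\|x\|)=c\arccot(\sinh r)$, for which $\sinh r=\cot(\alpha/c)$, $\cosh r=\csc(\alpha/c)$ and $\dint r=-\cosh(r)c^{-1}\,\dint\alpha$, the radial factor $\cosh(r)\sinh^{d-2}(r)\,\dint r$ becomes, up to the overall constant, $c^{-1}\cos^{d-2}(\alpha/c)/\sin^{d}(\alpha/c)\,\dint\alpha$. This yields the stated density, with the range $(0,c\pi/\max(2,c))$ arising from the defining constraint $\gamma(\|x\|)<\pi/\max(c,2)$ of $A_c$.

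Finally, \eqref{eq:gcestimate} will follow from the elementary inequalities $(2t/\pi)\le\sin(t)\le t$ valid on $[0,\pi/2]$ applied to $t=\alpha/c$, combined with the monotonicity bound $\cos(\alpha_0/c)\le\cos(\alpha/c)\le 1$ on $(0,\alpha_0]$. The upper bound uses $\sin^{-d}(\alpha/c)\le(\pi/2)^d(c/\alpha)^d$ and $\cos(\alpha/c)\le 1$; the lower bound uses $\sin^{-d}(\alpha/c)\ge(c/\alpha)^d$ and $\cos(\alpha/c)\ge\cos(\alpha_0/c)$. The only step requiring genuine thought is the rotation-invariance argument, which reduces a potentially coupled two-dimensional computation to a single scalar integral; the remainder is routine trigonometric bookkeeping.
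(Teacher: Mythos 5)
Your proposal is correct and follows essentially the same route as the paper: rotation invariance of the pushforward in the first coordinate forces the uniform spherical marginal (and accounts for the $f$-independence), after which $g_c$ is obtained by restricting to $A=\BS^{d-1}$, passing to hyperbolic polar coordinates, substituting $\alpha=c\arccot(\sinh h)$, and bounding via $2t/\pi\le\sin t\le t$. The only difference is cosmetic: you deduce the rotation invariance from the $SO(d)$-equivariance of $\tilde F_{c,f}$ together with the isometry invariance of $\mu^h$ (valid, provided one uses the paper's oriented-hyperplane convention so that $u_{RH}=Ru_H$), whereas the paper verifies the same fact by a longer direct manipulation of the integral representation \eqref{eq_plane_intensity} involving a decomposition of the rotation.
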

\begin{proof}
For Borel sets $A\in\CB(\BS^{d-1})$ and $B\in \CB( (0,\pi])$ we have
\begin{align*}
\mu^{s}_{c,f}(A\times B) &=\mu^h\big(\tilde{F}_{c,f}^{-1}(A\times B)\big)\\
&=\omega_{d}^{-1}\int_{\BS^{d-1}}\int_{u^{\perp}}\cosh(d_h(x,o))\,{\bf 1}\{L(u^{\perp},x)\in\tilde{F}_{c,f}^{-1}(A\times B)\}\,\CH^{d-1}(\dint x)\sigma_{d-1}(\dint u)
\end{align*}
by definition of $\lambda\mu^{s}_{c,f}(A\times B)$ as an image measure of $\lambda\mu^h$ under the mapping $\tilde F_{c,f}$ defined by \eqref{eq:Ftilde} and the representation \eqref{eq_plane_intensity} of the invariant measure $\mu^h$. Since $H(u):=u^{\perp}\in G_h(d,d-1)$ is isometric to $\BH^{d-1}$ we can introduce hyperbolic polar coordinates in the geodesic hyperplane $H(u)$ and write
\begin{align*}
\mu^{s}_{c,f}(A\times B) &=\omega_{d}^{-1}\int_{\BS^{d-1}}\int_{\BS_{H(u)}^{d-2}}\int_0^{\infty}\cosh(h)\sinh(h)^{d-2}\\
&\qquad\qquad\times{\bf 1}\{L(H(u),t_{h,v}(o))\in\tilde{F}_{c,f}^{-1}(A\times B)\}\,\dint h\,\CH^{d-2}(\dint v)\,\sigma_{d-1}(\dint u),
\end{align*}
where 
$t_{h,v}$ stands for hyperbolic translation operator by distance $h$ in direction $v$, namely it moves $o$ along the geodesic ray in direction $v$ (which is a unit vector in the tangent space of $H(u)$ at $o$) by hyperbolic distance $h$, and $\BS_{H(u)}^{d-2}$ stands for the unit sphere in the tangent space of $H(u)$ at $o$. 

Next, we put
$$
g(h):=f\left(\sqrt{\cosh(h)-1\over \cosh(h)+1}\right).
$$ 
Then according to \eqref{eq:EuclideanHyperbolicDist} and \eqref{eq_transform} we have $g(h)=f(\|x\|)$ and $\gamma(\|x\|)=\arccot(\sinh(h))$. Hence, denoting $\Pi_{\rm rad}(t_{h,v}(o))=\tilde v$ and by applying Fubini's theorem we obtain
\begin{align*}
\mu^{s}_{c,f}(A\times B)&=\omega_{d}^{-1}\int_0^{\infty}\cosh(h)\sinh(h)^{d-2}\,{\bf 1}\{c\,\arccot(\sinh(h))\in B\cap(0,c\pi/\max(c,2)]\}\\
&\times\int_{\BS^{d-1}}\int_{\BS_{H(u)}^{d-2}}{\bf 1}\{\cos (g(h))\tilde v+\sin (g(h))u\in A\}\CH^{d-2}(\dint v)\,\sigma_{d-1}(\dint u)\,\dint h.
\end{align*}
For a fixed $B\in \CB( (0,\pi])$ define the following measure on the unit sphere $\BS^{d-1}$:
$$
\Theta(\,\cdot\,):=\mu^{s}_{c,f}(\,\cdot\,\times B).
$$
Let $\rho\in\BS\BO_d$, where $\BS\BO_d$ is the group of rotations in $\BR^d$ and let $A\subset \CB(\BS^{d-1})$.
 Then
\begin{align*}
\Theta(\rho A)&=\omega_{d}^{-1}\int_0^{\infty}\cosh(h)\sinh(h)^{d-2}\,{\bf 1}\{c\,\arccot(\sinh(h))\in B\cap (0,c\pi/\max(c,2)]\}\\
&\qquad\times\int_{\BS^{d-1}}\int_{\BS_{H(u)}^{d-2}}{\bf 1}\{\cos (g(h))\widetilde v+\sin(g(h)) u\in \rho A\}\CH^{d-2}(\dint v)\,\sigma_{d-1}(\dint u)\,\dint h.
\end{align*}
The inner integral can be rewritten as 
\begin{align*}
I(h,\rho A)&:=\int_{\BS^{d-1}}\int_{\BS_{H(u)}^{d-2}}{\bf 1}\{\cos (g(h)) \widetilde v+\sin (g(h))u\in \rho A\}\,\CH^{d-2}(\dint v)\,\sigma_{d-1}(\dint u)\\
&=\int_{\BS^{d-1}}\int_{\BS_{H(u)}^{d-2}}{\bf 1}\{\cos (g(h)) (\rho^{-1}\widetilde v)+\sin (g(h)) (\rho^{-1}u)\in A\}\,\CH^{d-2}(\dint v)\,\sigma_{d-1}(\dint u).
\end{align*}
Note  that for any fixed $H(u)\in G_h(d,d-1)$ the rotation $\rho^{-1}$ can be decomposed as $\rho^{-1} = \rho_1\rho_2$, where $\rho_1,\rho_2\in\BS\BO_d$ are such that $\rho_2H(u)=H(u)$. Hence, $\rho^{-1}u=\rho_1u$. Due to  the invariance of the Hausdorff measure $\CH^{d-2}$ on $\BS_{H(u)}^{d-2}$ with respect to rotations fixing $H(u)$ we have
\begin{align*}
I(h,\rho A)&=\int_{\BS^{d-1}}\int_{\BS_{H(u)}^{d-2}}{\bf 1}\{\cos (g(h))(\rho_1\rho_2\widetilde v)+\sin (g(h))(\rho_1 u)\in A\}\,\CH^{d-2}(\dint v)\,\sigma_{d-1}(\dint u)\\
&=\int_{\BS^{d-1}}\int_{\BS_{H(u)}^{d-2}}{\bf 1}\{\cos (g(h))(\rho_1\widetilde v)+\sin (g(h))(\rho_1 u)\in A\}\,\CH^{d-2}(\dint v)\,\sigma_{d-1}(\dint u).
\end{align*}
Let us define $u':=\rho_1u$, $v'=\rho_1 v$ and observe that 
$\rho_1 H(u)=H(u').$ Then
\begin{align*}
I(h,\rho A)&=\int_{\BS^{d-1}}\int_{\BS_{H(u')}^{d-2}}{\bf 1}\{\cos (g(h)) \widetilde v'+\sin (g(h))u'\in A\}\CH^{d-2}(\dint v')\,\sigma_{d-1}(\dint \rho_1^{-1} u')\\
&=\int_{\BS^{d-1}}\int_{\BS_{H'}^{d-2}}{\bf 1}\{\cos (g(h)) \widetilde v'+\sin (g(h))u'\in A\}\CH^{d-2}(\dint u')\,\sigma_{d-1}(\dint u'),
\end{align*}
since $\sigma_{d-1}$ is $\BS\BO_d$-invariant. We have thus shown that $I(h,\rho A)=I(h,A)$ for any $\rho\in\BS\BO_d$ and, hence,
$$
\Theta(\rho A)=\Theta(A).
$$
This means that the measure $\Theta$ is $\BS\BO_d$-invariant and since any $\BS\BO_d$-invariant measure on $\BS^{d-1}$ is proportional to $\sigma_{d-1}$ according to \cite[Theorem 13.1.3]{SW} we conclude that
$$
\Theta(A)=\mu^{s}_{c,f}(\,\cdot\,\times B)=c(B)\sigma_{d-1}(A).
$$
By taking $A=\BS^{d-1}$ we find
$$
c(B)=\int_0^{\infty}\cosh(h)\sinh(h)^{d-2}\,{\bf 1}\{c\,\arccot(\sinh(h))\in B\cap (0,c\pi/\max(c,2)]\}\,\dint h,
$$
and finally we get
\begin{align*}
\mu^{s}_{c,f}(A\times B)&=\sigma_{d-1}(A)\int_0^{\infty}\cosh(h)\sinh(h)^{d-2}\,{\bf 1}\{c\,\arccot(\sinh(h))\in B\cap (0,c\pi/\max(c,2)]\}\,\dint h.
\end{align*}
Consider a new variable $\alpha:=c\,\arccot(\sinh(h))$. 
Then, 
\begin{align*}
\sinh(h)&=\cot(c^{-1}\alpha), \\
\cosh(h)&=\sqrt{1+\cot(c^{-1}\,\alpha)^2},\\
\dint h&=-{1\over \sqrt{1+\cot(c^{-1}\alpha)^2}}{c^{-1}\over \sin(c^{-1}\alpha)^2}\dint \alpha
\end{align*}
and so
\begin{align*}
\mu^{s}_{c,f}(A\times B)&=c^{-1}\,\sigma_{d-1}(A)\int_0^{c\pi/\max(2,c)}{\cot(c^{-1}\alpha)^{d-2}\over \sin(c^{-1}\alpha)^2}\,{\bf 1}\{\alpha\in B\}\,\dint \alpha.
\end{align*}
Thus, 
$$
g_c(\alpha)=c^{-1}{\cot(c^{-1}\alpha)^{d-2}\over \sin(c^{-1}\alpha)^2}
$$
for $\alpha\in(0,c\pi/\max(2,c)]$. Moreover, since $2x/\pi\leq \sin(x)\leq x$, which holds for any $0\leq x\leq {\pi\over 2}$, and since the cosine is a decreasing function on $[0,{\pi\over 2}]$, we have that for every $\alpha\in (0,\alpha_0]$ it holds that
\[
c^{d-1}\Big({\pi\over 2}\Big)^d\alpha^{-d}\ge g_c(\alpha)=c^{-1}{\cos(c^{-1}\alpha)^{d-2}\over \sin(c^{-1}\alpha)^d}
\geq c^{d-1}\cos(\alpha_0/c)^{d-2}\alpha^{-d}.
\]
This completes the argument.
\end{proof}

\section{Proof of Theorem \ref{thm:cover}.}\label{sec:Proof1}

The purpose of this section is to prove that for $\lambda\in(0,\infty)$
large enough, the sphere $\BS^{d-1}$ is covered by a spherical 
cap process $\Psi(\lambda\mu^{s,-})$ related to our original hyperbolic Poisson cylinder model.
The general strategy is based on \cite{Broman}, although the geometry 
is different in our current setting.
Here, we will consider any measure of the form given in Lemma
\ref{lemma:measure}. That is, we let the intensity measure $\lambda \mu^s_{c}$ of 
the Poisson  process $\Psi(\lambda \mu^s_{c})$ be given by
\begin{equation}\label{eq:mu_s}
\mu^s_{c}(A\times B)=\sigma_{d-1}(A)\int_{0}^{c\pi/\max(2,c)}g_c(\alpha){\bf 1}\{\alpha\in B\}\dint \alpha,
\end{equation}
where $c\in(0,\infty)$, $d\ge 2$, and $A\in\CB(\BS^{d-1})$, 
$B\in \CB((0,\pi])$ are Borel sets. Furthermore,  we assume that 
the density $g_c$ satisfies 
the lower bound in \eqref{eq:gcestimate}. 
Clearly, $\Psi(\lambda \mu^s_{c})$
is a Poisson  process on $\BS^{d-1} \times (0,{c\pi\over \max(2,c)}],$ but as before, we 
identify the pair $(x,\alpha)\in \Psi(\lambda \mu^s_{c})$ with the corresponding
spherical cap centred at $x$ and with spherical radius $\alpha.$

As we will see, 
Theorem \ref{thm:cover} is a direct consequence of the following result.

\begin{theorem} \label{thm:cover_gen}
For any measure $\mu^s_{c}$ of the form \eqref{eq:mu_s}, there exists 
some $\lambda_1(d,c)<\infty$ such that for every 
$\lambda> \lambda_1(d,c),$ we have that
\[
\BP(\CC(\lambda \mu^s_{c})=\BS^{d-1})=1.
\]
\end{theorem}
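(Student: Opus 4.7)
The plan is to show full coverage with positive probability for $\lambda$ large by a direct union bound at a single well-chosen scale, and then upgrade to probability one via the rotational $0$-$1$ law. The compactness of $\BS^{d-1}$ is crucial here: a finite covering of the sphere by small caps suffices, so the more delicate multiscale hierarchy needed in the non-compact Euclidean setting of \cite{Broman} can be sidestepped.

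The key estimate is the following. Pick $\alpha_0\in(0,c\pi/\max(2,c))$ so that the lower bound in \eqref{eq:gcestimate} is effective on $(0,\alpha_0]$, and fix a small scale $R_1>0$ with $R_0:=2R_1\leq \alpha_0/2$. For any spherical cap $S=\scap(v,R_1)$, let $A(S)$ denote the event that $\Psi(\lambda\mu^s_c)$ contains at least one point $(u,\alpha)$ with $\alpha\in[R_0,2R_0]$ and $d_s(u,v)\leq \alpha-R_1$; on $A(S)$ the entire cap $S$ lies inside the single cap $\scap(u,\alpha)$ of the process, and hence inside $\CC(\lambda\mu^s_c)$. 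Using the product form of $\mu^s_c$ from \eqref{eq:mu_s}, the lower bound $g_c(\alpha)\geq c^{d-1}\cos(\alpha_0/c)^{d-2}\alpha^{-d}$, and the elementary spherical-cap area estimate $\sigma_{d-1}(\scap(v,\theta))\geq C_d\theta^{d-1}$ valid for $\theta\leq\pi/2$, a direct computation yields
\[
\mu^s_c\bigl(\{(u,\alpha):\alpha\in[R_0,2R_0],\;d_s(u,v)\leq\alpha-R_1\}\bigr)\geq K,
\]
where $K=K(d,c)>0$ is a constant independent of both $v$ and the choice of $R_1$. The independence reflects the scale-invariant cancellation between the $(\alpha-R_1)^{d-1}$ from the spherical-cap area and the $R_0^{-(d-1)}$ from integrating $\alpha^{-d}$ across the dyadic range $[R_0,2R_0]$. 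Consequently $\BP(A(S)^c)\leq e^{-\lambda K}$ uniformly in $v$.

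To conclude, cover $\BS^{d-1}$ by some finite number $N=N(R_1,d)$ of caps $S_1,\ldots,S_N$ of radius $R_1$. Then $\bigcap_{i=1}^N A(S_i)\subset\{\CC(\lambda\mu^s_c)=\BS^{d-1}\}$, so the union bound gives
\[
\BP(\CC(\lambda\mu^s_c)\neq \BS^{d-1})\leq \sum_{i=1}^N \BP(A(S_i)^c)\leq N e^{-\lambda K}.
\]
Choosing $\lambda_1(d,c):=K^{-1}\log(2N)$, the right-hand side is at most $1/2$ for every $\lambda>\lambda_1$, so $\BP(\CC(\lambda\mu^s_c)=\BS^{d-1})\geq 1/2>0$. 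Finally, since $\mu^s_c$ is $\BS\BO_d$-invariant the Poisson  process $\Psi(\lambda\mu^s_c)$ is mixing (and in particular ergodic) under the rotation group; the event $\{\CC(\lambda\mu^s_c)=\BS^{d-1}\}$ is $\BS\BO_d$-invariant, so its probability lies in $\{0,1\}$, and the displayed lower bound forces it to equal $1$.

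The main obstacle is the $R_1$-independence of $K$ in the key estimate: this is the precise geometric manifestation of the singularity $g_c(\alpha)\sim\alpha^{-d}$, and is exactly where the hypothesis on $g_c$ in \eqref{eq:gcestimate} is essential---without this scale-invariant balance, one could not take $R_1$ small enough to make the finite covering argument work at a fixed $\lambda_1$. A secondary point is the appeal to the rotational $0$-$1$ law, which is standard for Poisson processes with an invariant intensity but should be recorded explicitly.
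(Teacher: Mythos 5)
Your single-scale estimate is fine as far as it goes: the computation that $\mu^s_c\bigl(\{(u,\alpha):\alpha\in[R_0,2R_0],\ d_s(u,v)\leq\alpha-R_1\}\bigr)\geq K(d,c)$ uniformly in $v$ and $R_1$ is correct and does follow from \eqref{eq:mu_s} and the lower bound in \eqref{eq:gcestimate}. The union bound over a finite cover then legitimately yields $\BP(\CC(\lambda\mu^s_c)=\BS^{d-1})\geq 1/2$ for $\lambda$ large. The gap is the final step. There is no ``rotational $0$--$1$ law'' here: $\BS\BO_d$ is a \emph{compact} group, and the action of a compact group on a Poisson process is not ergodic. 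Concretely, the event that $\Psi(\lambda\mu^s_c)$ contains at least one point $(u,\alpha)$ with $\alpha\geq 1/2$ is $\BS\BO_d$-invariant, yet its probability is $1-\exp(-\lambda\mu^s_c(\BS^{d-1}\times[1/2,\pi]))\in(0,1)$. So invariance of $\{\CC(\lambda\mu^s_c)=\BS^{d-1}\}$ under rotations gives you nothing, and your argument stops at ``coverage with probability at least $1/2$,'' which is strictly weaker than the theorem. Nor can you rescue it by sending $R_1\to 0$ at fixed $\lambda$: your $K$ stays constant while $N(R_1)\asymp R_1^{-(d-1)}$ blows up, so the union bound degenerates.

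The fix is exactly where the paper's proof diverges from yours: instead of using only the dyadic band $[R_0,2R_0]$, one integrates over \emph{all} radii $\alpha$ from the test scale $2^{-n}$ up to a fixed constant. Because $g_c(\alpha)\asymp\alpha^{-d}$ and the cap area contributes $\asymp(\alpha-2^{-n})^{d-1}$, the integrand behaves like $\alpha^{-1}$ and the total mass of caps covering a fixed cap of radius $2^{-n}$ is bounded below by $\lambda C(d,c)\,n$ --- it grows logarithmically in the scale rather than staying bounded. The failure probability for a maximal $2^{-n}$-net of cardinality $\asymp 2^{(d-1)n}$ is then at most $C(d)2^{(d-1)n}e^{-\lambda C(d,c)n}$, which tends to $0$ as $n\to\infty$ once $\lambda>2(d-1)/C(d,c)$. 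This gives $\BP(\CC(\lambda\mu^s_c)^c\neq\emptyset)=0$ directly, with no appeal to any zero--one law. In short: your observation about the scale-invariant balance is the right one, but it must be exploited across all scales simultaneously (yielding the extra factor of $n$ in the exponent) rather than at a single scale followed by an ergodicity upgrade that is unavailable on a compact homogeneous space.
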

Before we prove the theorem, we introduce some further
notation and prove an auxiliary lemma. For any $n\in \BN$ we define $S_n \subset \BS^{d-1}$ to be a maximal (with 
respect to the number of points) subset of $\BS^{d-1}$ such that 
$d_{s}(x,y) \geq 2^{-n}$ for any $x,y\in S_n$. 
We will assume that $S_n$ is picked according to some predetermined 
rule. The following lemma provides estimates on the cardinality $|S_n|$ of $S_n$. 

\begin{lemma} \label{lemma:cardSnest}
For any $d\ge 2$ there exist constants $0<c(d),C(d)<\infty$ depending only on $d$ 
such that for every $n\geq 1$ we have that
\[
c(d)2^{(d-1)n} \leq |S_n| \leq C(d)2^{(d-1)n}.
\] 
\end{lemma}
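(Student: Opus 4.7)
The plan is to use the standard volume-packing/covering comparison on the sphere. I would first record the elementary estimate that for $\alpha\in(0,\pi/2]$, the spherical cap measure satisfies
\[
c_1(d)\,\alpha^{d-1}\;\leq\;\sigma_{d-1}\big(\scap(x,\alpha)\big)\;\leq\;c_2(d)\,\alpha^{d-1}
\]
for constants $0<c_1(d)\leq c_2(d)<\infty$ depending only on $d$. This follows from the polar integral representation $\sigma_{d-1}(\scap(x,\alpha))=\omega_{d-1}\int_0^\alpha \sin(\theta)^{d-2}\,\dint\theta$ together with the two-sided bound $2\theta/\pi\leq\sin\theta\leq\theta$ on $[0,\pi/2]$. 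Note that for $n\geq 1$ we have $2^{-n}\leq 1/2<\pi/2$, so this bound is applicable for all radii under consideration.

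For the upper bound on $|S_n|$, I would observe that by the separation property of $S_n$ the open caps $\scap(x,2^{-n-1})$, $x\in S_n$, are pairwise disjoint (if $\scap(x,2^{-n-1})\cap\scap(y,2^{-n-1})\neq\emptyset$ then the triangle inequality for $d_s$ would give $d_s(x,y)<2^{-n}$, contradicting maximality-based separation). Summing their measures inside $\BS^{d-1}$ yields
\[
|S_n|\cdot c_1(d)\,2^{-(d-1)(n+1)}\;\leq\;\sigma_{d-1}(\BS^{d-1})=\omega_d,
\]
which gives $|S_n|\leq C(d)\,2^{(d-1)n}$ with $C(d):=2^{d-1}\omega_d/c_1(d)$.

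For the lower bound, I would use the maximality of $S_n$: any point $y\in\BS^{d-1}\setminus S_n$ must satisfy $d_s(y,x)<2^{-n}$ for some $x\in S_n$ (otherwise $S_n\cup\{y\}$ would still be $2^{-n}$-separated, contradicting maximality). Hence the caps $\scap(x,2^{-n})$, $x\in S_n$, cover $\BS^{d-1}$, and summing their measures yields
\[
|S_n|\cdot c_2(d)\,2^{-(d-1)n}\;\geq\;\omega_d,
\]
giving $|S_n|\geq c(d)\,2^{(d-1)n}$ with $c(d):=\omega_d/c_2(d)$.

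This argument is essentially routine; the only delicate point is making sure the constants in the cap-volume estimate are uniform in $n$, which is guaranteed because $2^{-n}\leq 1/2<\pi/2$ for all $n\geq 1$. No serious obstacle is anticipated.
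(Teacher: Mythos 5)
Your proof is correct. It is the same underlying packing/covering volume comparison as in the paper, but implemented intrinsically on the sphere rather than extrinsically in $\BR^d$: you bound $|S_n|$ from above by packing disjoint open caps $\scap(x,2^{-n-1})$ into $\BS^{d-1}$ and from below by the covering property $\BS^{d-1}\subset\bigcup_{x\in S_n}\scap(x,2^{-n})$ (which the paper itself also invokes later, in \eqref{eqn:SsubsetSC}), and your key input is the two-sided estimate $c_1(d)\alpha^{d-1}\le\sigma_{d-1}(\scap(x,\alpha))\le c_2(d)\alpha^{d-1}$ from the polar integral and $2\theta/\pi\le\sin\theta\le\theta$. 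The paper instead works with Euclidean balls $B(x,\varepsilon/2)$ and $B(x,2\varepsilon)$ and compares their total volume with that of the Euclidean annulus $B(o,1+\varepsilon)\setminus B(o,1-\varepsilon)$, which trades the cap-area formula for elementary ball volumes at the cost of passing back and forth between the chordal and spherical metrics. Your version is slightly cleaner on that last point, since the triangle inequality for $d_s$ gives the disjointness and covering statements directly, whereas mixing $d_s$-separation with Euclidean balls requires a (harmless, constant-only) comparison of the two metrics. Both arguments yield the claim with explicit constants; there is no gap in yours.
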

\begin{proof}
Take $\varepsilon>0$ small enough and let $S_\varepsilon$ be a 
maximal subsets of $\BS^{d-1}$ satisfying $d_s(x,y)\geq\varepsilon$. 
Then the union $\bigcup_{x\in S_\varepsilon}B\big(x,{\varepsilon\over 2}\big)$ does not touch the ball $B\big(o,1-{\varepsilon\over 2}\big)$, but is contained in $B\big(o,1+{\varepsilon\over 2}\big)$. Write $v(r)$ and $s(r)$ for the volume and the surface area of a ball of radius $r>0$. It follows that
\begin{align*}
|S_\varepsilon|v\Big({\varepsilon\over 2}\Big) &\leq v\Big(1+{\varepsilon\over 2}\Big) - v\Big(1-{\varepsilon\over 2}\Big)
=\int_{1-\varepsilon/2}^{1+\varepsilon/2}s(r)\,dr
\leq \varepsilon s\Big(1+{\varepsilon\over 2}\Big)
=\varepsilon\Big(1+{\varepsilon\over 2}\Big)^{d-1}s(1)
\end{align*}
and hence
\begin{align}\label{eq:14-05-24A}
|S_\varepsilon| \leq \varepsilon\Big(1+{\varepsilon\over 2}\Big)^{d-1}\Big({2\over\varepsilon}\Big)^d{s(1)\over v(1)} = 2d\Big(1+{2\over\varepsilon}\Big)^{d-1}.
\end{align}
On the other hand, it holds that the union $\bigcup_{x\in S_\varepsilon}B\big(x,2\varepsilon\big)$ contains the spherical annulus $B\big(o,1+\varepsilon\big)\setminus B\big(o,1-\varepsilon\big).$ To see this, let for  
$z\in B(o,1+\varepsilon)\setminus B(o,1-\varepsilon),$ $x_z$
be the point in $S_\varepsilon$ which is closest to $z/\Vert z \Vert
\in B(o,1).$
Then,
we have that $d(z,x_z)\leq d(z,z/\Vert z \Vert)+d(z/\Vert z \Vert,x_z)
< 2\epsilon,$ where the last inequality 
follows from the definition of $S_\epsilon.$ We therefore obtain
\begin{align*}
|S_\varepsilon|v(2\varepsilon) &\geq v\left(1+\varepsilon\right) 
- v\left(1-\varepsilon\right)  \\
&  = \int_{1-\varepsilon}^{1+\varepsilon}s(r)\,dr
\geq 2\,\varepsilon s\Big(1-{\varepsilon}\Big)
=2\,\varepsilon\Big(1-{\varepsilon}\Big)^{d-1}s(1).
\end{align*}
It follows that
\begin{align}\label{eq:14-05-24B}
|S_\varepsilon|\geq 2\,\varepsilon\left(1-\varepsilon\right)^{d-1}{1\over(2\varepsilon)^d}{s(1)\over v(1)} = {d\over 2^{d-1}}\left({1\over\varepsilon}-1\right)^{d-1}.
\end{align}
Now, we choose $\varepsilon=2^{-n}$. Then \eqref{eq:14-05-24A} implies that
\begin{align*}
|S_n| \leq 2d(1+2^{n+1})^{d-1} \leq 2d\,2^{(n+2)(d-1)} = d2^{2d-1} \cdot 2^{n(d-1)},
\end{align*}
whereas from \eqref{eq:14-05-24B} we get
\begin{align*}
|S_n| \geq {d\over 2^{d-1}}\left(2^n-1\right)^{d-1} 
\geq {d\over 2^{d-1}}\,2^{(n-1)(d-1)} = {d\over 2^{2(d-1)}}\cdot 2^{n(d-1)}.
\end{align*}
This proves the lemma.
\end{proof}

\noindent
For any $n\in \BN,$ define 
\[
\Psi_n(\lambda \mu^s_{c})
:=\{(x,\alpha)\in \Psi(\lambda \mu^s_{c}): \alpha \geq 2^{-n}\},
\]
which is the collection of pairs $(x,\alpha)$ corresponding to 
caps of spherical radius at least $2^{-n}.$ Next, we let 
\[
\CC_n(\lambda \mu^s_{c}):=\bigcup_{(x,\alpha)\in \Psi_n(\lambda \mu^s_{c})} \scap(x,\alpha)
\qquad\textrm{ and }\qquad
\CC(\lambda \mu^s_{c})=\bigcup_{(x,\alpha)\in \Psi(\lambda \mu^s_{c})} \scap(x,\alpha)
\]
be the associated covered regions and note that $\CC(\lambda \mu^s_{c})=\bigcup_{n\in\BN} \CC_n(\lambda \mu^s_{c}).$

\begin{proof}[Proof of Theorem \ref{thm:cover_gen}]
Throughout the proof $C(d,c)$ denotes a constant, which may depend only on $d$ and $c$ and may change from 
line to line.

Consider any $n\geq N$ where $N$ is some finite number depending only on 
$d$ that we will specify later. Fix some $x_0\in S_n,$ and 
consider the spherical cap $\scap(x_0,2^{-n}).$ Our goal is to compute 
$\lambda \mu^s_{c}\big(\{(x,\alpha): \scap(x_0,2^{-n})\subset \scap(x,\alpha)\}\big)$. 
In order to do this 
we note that for fixed $\alpha>2^{-n}$ we have that 
$\scap(x_0,2^{-n})\subset \scap(x,\alpha)$ if and only if
$d_{s}(x,x_0)\leq \alpha-2^{-n}.$
Recall from Section \ref{sec:Notation} that $\omega_d$ denotes 
the surface area of $\BS^{d-1},$ and 
observe that 
\begin{eqnarray*} 
\lefteqn{\sigma_{d-1}(\{x\in \BS^{d-1}: 
\scap(x_0,2^{-n})\subset \scap(x,\alpha)\})}\\
& & =\sigma_{d-1}(\scap(x_0,\alpha-2^{-n}))
=\omega_{d}\int_0^{\alpha-2^{-n}} \sin^{d-2} \theta\, \dint\theta.
\end{eqnarray*}
Therefore, by \eqref{eq:mu_s},
\begin{eqnarray} \label{eqn:mucalc1} 
\lefteqn{\lambda \mu^s_{c}\big(\{(x,\alpha): \scap(x_0,2^{-n})\subset \scap(x,\alpha)\}\big)}\\
& & =\lambda\,\int_{2^{-n}}^{c\pi/\max(2,c)} \int_{\BS^{d-1}} {\bf 1}\{\scap(x_0,2^{-n})\subset \scap(x,\alpha)\} g_c(\alpha)\,
\dint x \, \dint\alpha \nonumber \\
& & \ge \lambda\,\int_{2^{-n}}^{c\pi/2}\sigma_{d-1}(\{x\in \BS^{d-1}: \scap(x_0,2^{-n})\subset \scap(x,\alpha)\})
g_c(\alpha)\, \dint\alpha \nonumber \\
& & \geq \lambda\omega_{d}
\int_{2^{-n}}^{2c/\sqrt{d}} \int_0^{\alpha-2^{-n}} \sin^{d-2} \theta \, \dint\theta
\,g_c(\alpha) \,\dint\alpha. \nonumber
\end{eqnarray}
\noindent 
Let $f(\alpha):=\int_0^\alpha \sin^{d-2} \theta \dint\theta.$ It is well 
known that for any $0<\theta<\pi/2$ we have that 
$
\theta-\theta^3<\sin \theta < \theta, 
$
and so 
\begin{eqnarray} \label{eqn:falphalower}
\lefteqn{f(\alpha)\geq \int_0^\alpha (\theta-\theta^3)^{d-2} \dint\theta
\geq \int_0^\alpha (\theta-\alpha^3)^{d-2} \dint\theta}\\
& & =\left[\frac{(\theta-\alpha^3)^{d-1}}{d-1}\right]_0^\alpha
=\frac{\alpha^{d-1}}{d-1}(1-\alpha^2)^{d-1}
\geq \frac{\alpha^{d-1}}{d-1}(1-(d-1)\alpha^2)
=\frac{\alpha^{d-1}}{d-1}-\alpha^{d+1}. \nonumber
\end{eqnarray}
\allowdisplaybreaks
Thus, combining \eqref{eqn:mucalc1} and \eqref{eqn:falphalower} 
we see that
\begin{eqnarray*} 
\lefteqn{\lambda \mu^s_{c}\big(\{(x,\alpha): \scap(x_0,2^{-n})\subset \scap(x,\alpha)\}\big)}\\& & \geq \lambda\omega_{d} 
\int_{2^{-n}}^{2c/\sqrt{d}} \left(\frac{(\alpha-2^{-n})^{d-1}}{d-1} 
-(\alpha-2^{-n})^{d+1}\right)g_c(\alpha) \,\dint\alpha \nonumber \\
& & = \lambda\omega_{d} 
\int_{2^{-n}}^{2c/\sqrt{d}} (\alpha-2^{-n})^{d-1}
\left(\frac{1}{d-1} 
-(\alpha-2^{-n})^{2}\right)g_c(\alpha) \,\dint\alpha \nonumber \\
& & \geq \lambda\omega_{d} 
\int_{2^{-n}}^{2c/\sqrt{d}} (\alpha-2^{-n})^{d-1}
\left(\frac{1}{d-1} -\frac{1}{d}\right)g_c(\alpha)\, \dint\alpha \nonumber \\
& & \geq \frac{\lambda\omega_{d}}{d(d-1)}
\int_{2^{-n}}^{2c/\sqrt{d}}(\alpha-2^{-n})^{d-1} g_c(\alpha) \,\dint\alpha.
\end{eqnarray*}
In addition, using the lower bound in \eqref{eq:gcestimate} with $\alpha_0={2c\over \sqrt{d}}$ we get that 
for every $\alpha<{2c\over \sqrt{d}}$ it holds that
\[
g_c(\alpha)\geq c^{d-1}\cos(2/\sqrt{d})^{d-2}\alpha^{-d}.
\]
From this we conclude that
\begin{eqnarray} \label{eqn:mucalc2}
\lefteqn{\lambda \mu^s_{c}\big(\{(x,\alpha): \scap(x_0,2^{-n})\subset \scap(x,\alpha)\}\big)}\\
& & \geq \lambda C(d,c)\int_{2^{-n}}^{2c/\sqrt{d}}\left(\alpha-2^{-n}\right)^{d-1} 
\alpha^{-d} \,\dint\alpha \nonumber\\
& & \geq \lambda C(d,c)\int_{2^{-n}}^{2c/\sqrt{d}} 
\left(\alpha^{d-1}-(d-1)2^{-n}\alpha^{d-2}\right)
\alpha^{-d} \,\dint\alpha \nonumber \\
& &  =\lambda C(d,c)\Big([\log \alpha]_{2^{-n}}^{2c/\sqrt{d}}
-(d-1)2^{-n}[-\alpha^{-1}]_{2^{-n}}^{2c /\sqrt{d}} \Big)
\geq \lambda C(d,c)n, \nonumber
\end{eqnarray}
for every $n$ satisfying $2^{n/2}>c^{-1}\sqrt{d}e^{d-1}$. This can be achieved by choosing $N$ as the maximum of $1$ and ${2\over\log 2}\log(c^{-1}\sqrt{d}e^{d-1})$.
Next, we let 
\[
M_n:=\big\{x\in S_n: \not \exists (y,\alpha)\in \Psi(\lambda \mu^s_{c}) 
\textrm{ with } \scap(x,2^{-n})\subset \scap(y,\alpha)\big\}.
\]
Thus, $M_n$ is the set of caps with centers in $S_n$ and of radii 
$2^{-n}$ which are not covered by one single cap from the Poisson 
process $\Psi(\lambda \mu^s_{c}) .$
Observe that if $M_n=\emptyset,$
then $\BS^{d-1} =\CC_n(\lambda \mu^s_{c})$, since
\begin{equation} \label{eqn:SsubsetSC}
\BS^{d-1}\subset \bigcup_{x\in S_n} \scap(x,2^{-n}).
\end{equation}
Indeed, this follows because otherwise 
$
y\in \BS^{d-1} \setminus \bigcup_{x\in S_n} \scap(x,2^{-n})  
$
would be a point in $\BS^{d-1}$ such that the spherical distance between $y$ and any 
$x\in S_n$ would be at least $2^{-n},$ contradicting the maximality of $S_n.$  
Next, we see that 
\begin{eqnarray*}
\lefteqn{\BP(\CC(\lambda \mu^s_{c})^c \neq\emptyset)\leq \BP(\CC_n(\lambda \mu^s_{c})^c  \neq \emptyset)
\leq \BP(|M_n|>0)\leq \BE[|M_n|]}\\
& & =|S_n| \BP(\not \exists (y,\alpha)\in \Psi_n(\lambda \mu^s_{c}):
\scap(x_0,2^{-n})\subset \scap(y,\alpha))\\
& & =|S_n|\exp\left(- 
\lambda \mu^s_{c}\big(\{(x,\alpha): \scap(x_0,2^{-n})\subset \scap(x,\alpha)\}\big) \right) \\
& & \leq C(d) 2^{(d-1)n}\exp(-\lambda C(d,c)n),
\end{eqnarray*}
by using Lemma \ref{lemma:cardSnest} and \eqref{eqn:mucalc2}, and by taking 
$n$ large enough.
Thus, for $\lambda>2(d-1)/C(d,c)$ we see that 
\[
\BP(\CC(\lambda \mu^s_{c})^c \neq \emptyset) 
\leq \lim_{n\to \infty} C(d) \exp((-\lambda C(d,c) +(d-1)\log 2)n)=0.
\]
This completes the argument.
\end{proof}

\begin{proof}[Proof of Theorem \ref{thm:cover}]
According to \eqref{eqn:C-inclusion}, we have that 
$\CC(\lambda\mu^{s}_{c,f_1})\subset\CC(\lambda \mu^{s,-})$
and so 
\[
\BP(\CC(\lambda \mu^{s,-})=\BS^{d-1})
\geq \BP(\CC(\lambda\mu^{s}_{c,f_1})=\BS^{d-1}),
\]
where $f_1$ is as explained just above \eqref{eqn:C-inclusion}.
Furthermore, according to Lemma \ref{lemma:measure}, 
$\mu^{s}_{c,f_1}$ is of the form \eqref{eq:mu_s} with $c={1\over 2}\arccos(\cosh^{-1}(1))$, and so the statement
follows by applying Theorem \ref{thm:cover_gen}.
\end{proof}

\section{Proof of Theorem \ref{thm:Cbounded}.} \label{sec:fracball}

The purpose of this section is to prove Theorem \ref{thm:Cbounded}.
Recall therefore the fractal ball model, and in particular definition
\eqref{eqn:muEdef} of the intensity measure $\mu^E.$ For the purpose of this section
and the results that we are proving, it is natural to work in $\BR^d.$ 
However, we warn the reader that we shall later apply these results in $\BR^{d-1}.$

In order to proceed, we will consider 
crossings of annuli $\CA(a,b):=[-b,b]^d \setminus [-a,a]^d$ 
where $0<a<b<\infty.$ The event $\Cr(\CA(a,b),\CC(\lambda\mu^E))$
that the annulus $\CA(a,b)$ is crossed by the set $\CC(\lambda\mu^E)$
is then formally defined in the following way:
\[
\Cr(\CA(a,b),\CC(\lambda\mu^E))
=\{\exists \textrm{ a path } \omega: \omega \subset \CC(\lambda\mu^E),
\omega\cap \partial [-a,a]^d\neq \emptyset \textrm{ and } 
\omega\cap \partial [-b,b]^d\neq \emptyset \}.
\]
Here, by a path we understand a union of balls $\bigcup_{i=1}^m B(x_i,r_i)$, such that 
$(x_i,r_i)\in \Psi(\lambda\mu^E)$, $1\leq i\leq m$ and 
$B(x_i,r_i)\cap B(x_{i+1},r_{i+1})\neq \emptyset$, $1\leq i\leq m-1$. 
If $\Cr(\CA(a,b),\CC(\lambda\mu^E))$ does not occur, we say that 
$[-a,a]^d$ is {\em separated} from $[-b,b]^d$ (by the set 
$\CV(\lambda\mu^E)$, defined in \eqref{eqn:defVfracball}).
Theorem \ref{thm:Cbounded} follows from the following result.
\begin{theorem} \label{thm:anncrossings}
There exists $\lambda>0$ such that for any $0<a<b<\infty$ we have that 
\[
\BP(\Cr(\CA(a,b),\CC(\lambda\mu^E)))<1.
\]
\end{theorem}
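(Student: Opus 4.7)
The plan is to combine the semi-scale invariance of $\mu^E$ with a block renormalization argument in the spirit of \cite{Broman} and \cite{O96}.

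First I would reduce the general problem to a uniform bound on $p(1,N):=\BP(\Cr(\CA(1,N),\CC(\lambda\mu^E)))$ for $N>1$. Writing $p(a,b)=\BP(\Cr(\CA(a,b),\CC(\lambda\mu^E)))$ and applying the scaling $(x,r)\mapsto (x/a,r/a)$, the process $\Psi(\lambda\mu^E)$ is mapped to a Poisson process with the same density $\lambda s^{-(d+1)}$ but restricted to radii in $(0,1/a]$. For $a\ge 1$ this scaled process is stochastically dominated by $\Psi(\lambda\mu^E)$, giving $p(a,b)\le p(1,b/a)$. For $a<1\le b$, any path realising the crossing of $\CA(a,b)$ is by connectedness forced to intersect $\partial[-1,1]^d$, so that $\Cr(\CA(a,b))\subset \Cr(\CA(1,b))$ and hence $p(a,b)\le p(1,b)$. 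The residual case $a<b\le 1$ I would handle by rescaling by $1/b$: this produces the annulus $\CA(a/b,1)$ together with an independent Poisson family of extra balls of radii in $(1,1/b]$ whose total expected mass on a bounded neighbourhood of $\CA(a/b,1)$ is finite, so that a union bound together with a small reduction of $\lambda$ reduces this case to the canonical family as well.

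Next, to bound $p(1,N)$ uniformly in $N$ I would implement a block renormalization on geometric scales. Fix $\rho>1$ and decompose $\CA(1,N)$ into the nested layers $\CA(\rho^k,\rho^{k+1})$ for $0\le k<\log_\rho N$. Any $\CC$-crossing of $\CA(1,N)$ forces a crossing of each layer. By semi-scale invariance, the Poisson sub-process of balls whose radius lies in a dyadic window $(c_1\rho^k,c_2\rho^k]$ and whose centre lies near the $k$-th layer, rescaled by $\rho^{-k}$, has a distribution that does not depend on $k$; moreover these sub-processes are independent across disjoint radius windows. For each layer I would introduce a blocking event $G_k$ asserting that this local dyadic-window sub-process, together with the vacant set $\CV(\lambda\mu^E)$, produces a topological vacant sheet inside $\CA(\rho^k,\rho^{k+1})$ separating its two boundary components.

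The main obstacle is twofold. The first difficulty is to show that $\BP(G_k)$ admits a positive lower bound $\eta(\lambda)$ that is independent of $k$ for $\lambda$ small; this is a direct Poisson computation using the expected-number estimates for balls in a bounded dyadic window after rescaling the region to unit size. The more delicate part will be to show that the simultaneous occurrence of all the $G_k$ actually precludes $\Cr(\CA(1,N))$, i.e.\ that balls of radii outside the dyadic windows, in particular very small balls conspiring across many scales, cannot collectively bridge the blocking sheets. This I would handle following the scheme of \cite{Broman}: a Peierls-type counting over potential bridging configurations, combined with an induction on scales that exploits the fact that for $\lambda$ small larger balls are sparse while smaller balls cannot by themselves span gaps larger than their diameter. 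Independence of the $G_k$ across disjoint radius windows then converts these per-scale bounds into a positive lower bound on $\BP(\bigcap_k G_k)$, yielding $p(1,N)<1$ uniformly in $N$ and completing the proof via the scaling reductions of the first step.
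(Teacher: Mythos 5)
There is a genuine gap, and it sits exactly where you placed your ``main obstacle''. The assertion that $\BP(G_k)\geq\eta(\lambda)>0$ --- i.e.\ that the vacant set contains a connected, codimension-one \emph{sheet} separating the two boundary components of a layer --- is not ``a direct Poisson computation using expected-number estimates''. Producing a separating surface inside a random fractal of Lebesgue measure zero is the entire difficulty of the theorem, and no first-moment count of balls in a dyadic window yields it. The paper does not prove it from scratch either: it discretizes the radii into scales $(M^{-n},M^{-n+1}]$, shows that the level-$n$ boxes untouched by balls of the corresponding scale form a finite-range-dependent field that stochastically dominates (via \cite[Theorem B26]{Liggett}) an i.i.d.\ Bernoulli field with parameter $p$ close to $1$, and thereby couples the processes so that $\CD^M(p)\subset\CV(\lambda\mu^E)$, where $\CD^M(p)$ is Mandelbrot fractal percolation; the existence of a separating sheet in $\CD^M(p)$ is then \emph{imported} from Orzechowski \cite{O96}. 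Note that this coupling handles all radii simultaneously (one $M$-adic generation per radius scale), so the paper never faces your ``more delicate part'' about small balls conspiring across scales to bridge the sheets. In your set-up, where $G_k$ controls only a single radius window per layer, that part is not a patchable technicality: the Peierls-plus-induction scheme you gesture at would essentially amount to reproving the sheet-percolation theorem of \cite{O96}, and \cite{Broman} (which concerns the existence/extinction transition of $\CV$, not sheets) does not supply it.

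Your reduction step also has a hole for thin annuli. When $b/a$ is close to $1$, the rescaled annulus $\CA(1,b/a)$ contains no complete layer $\CA(\rho^k,\rho^{k+1})$, so the renormalization over $0\leq k<\log_\rho N$ is empty and gives no bound --- and these are precisely the hardest annuli to block. Moreover the case $a<b\leq 1$ is circular as written: rescaling by $1/b$ returns an annulus with outer radius exactly $1$, which is again in the same case. The paper's route is different and avoids both problems: it establishes $\BP(\Cr(\CA(1/6,1/2),\CC(\lambda\mu^E)))<1$ once, and then, for arbitrary $0<a<b$, chooses $N$ with $3^{-N}<b-a$ and tiles the mid-surface $\partial[-(a+b)/2,(a+b)/2]^d$ with translated copies of $\CA(3^{-N-1}/2,3^{-N}/2)$; any crossing of $\CA(a,b)$ forces a crossing of one of these copies, each copy has crossing probability bounded away from $1$ by semi-scale invariance after conditioning away the a.s.\ finitely many too-large balls, and the FKG inequality for the decreasing non-crossing events gives $\BP(\Cr(\CA(a,b),\CC(\lambda\mu^E))^c)>0$. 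If you replace your blocking-sheet construction by the domination coupling with \cite{O96} and your scaling reduction by this tiling argument, you arrive at the paper's proof.
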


\begin{proof}[Proof of Theorem \ref{thm:Cbounded} from 
Theorem \ref{thm:anncrossings}]
Let $\lambda>0$ be such that the conclusion of Theorem 
\ref{thm:anncrossings} holds, and let $a=1,b=3.$ Furthermore,
let 
\begin{equation} \label{eqn:psidef}
\psi:=\BP(\Cr(\CA(1,3),\CC(\lambda\mu^E)))<1.
\end{equation}
Observe next that the events 
$(\Cr(\CA(3^{2n},3^{2n+1}),\CC(\lambda\mu^E))_{n\geq 0}$ are independent. Indeed,
since the balls in $\Psi(\lambda \mu^E)$
all have radius at most $1$ the event $\Cr(\CA(3^{2n},3^{2n+1}),\CC(\lambda\mu^E))$ depends only on the restriction of $\Psi(\lambda\mu^E)$ to the set 
$\{(x,r)\colon x\in(-3^{2n+1}-1,3^{2n+1}+1)^d\setminus (-3^{2n}-1,3^{2n}+1)^d\}$. 
All these sets are disjoint for $n\in\BN$ and the statement follows by 
the independence property of Poisson  process \cite[Theorem 5.2]{LP18}. 
Observe further that the collection of balls generated by $\Psi(\lambda \mu^E)$ 
is semi-scale invariant as described in Section \ref{sec:Notationfracball}. From this 
it follows  that 
\begin{equation}\label{eqn:crossmon}
\BP(\Cr(\CA(3^{2n+2},3^{2n+3}),\CC(\lambda\mu^E)))
\leq 
\BP(\Cr(\CA(3^{2n},3^{2n+1}),\CC(\lambda\mu^E))),
\end{equation}
for every $n\geq 0$, where we additionally note that discarding some balls may only decrease the probability of crossing. 

Recall from Section \ref{sec:Notationfracball} that 
$o\in \CC(\lambda\mu^E)$ almost surely. Then, 
let $\CC_o(\lambda\mu^E)$ denote the cluster containing the origin $o$, and note that 
\begin{eqnarray} \label{eqn:Cobounded}
\lefteqn{\BP(\CC_o(\lambda\mu^E) \textrm{ is unbounded })
\leq \BP\left(\bigcap_{n=0}^\infty 
\Cr(\CA(3^{2n},3^{2n+1}),\CC(\lambda\mu^E))
\right)}\\
& & = \prod_{n=0}^\infty \BP\left(
\Cr(\CA(3^{2n},3^{2n+1}),\CC(\lambda\mu^E))
\right)
\leq \prod_{n=0}^\infty \psi=0, \nonumber
\end{eqnarray}
by using independence in the first equality, and \eqref{eqn:crossmon}
and \eqref{eqn:psidef} in the last inequality.
Since there are countably many balls that make up $\CC(\lambda\mu^E),$ 
it follows that there are at most countably 
many connected components in $\CC(\lambda\mu^E).$ Therefore, 
Theorem \ref{thm:Cbounded} follows from \eqref{eqn:Cobounded}.
\end{proof}

We will now address Theorem \ref{thm:anncrossings}. We also note that 
we will use the result of Theorem \ref{thm:anncrossings} to prove 
Theorem \ref{thm:disconnect}. 
Instead of  proving Theorem \ref{thm:anncrossings} from first 
principles, which would require a long and technical argument, 
we will rely on a previous result from \cite{O96} concerning the so-called 
Mandelbrot fractal percolation model. In order to use this result, we shall 
first need to describe the model. For simplicity and brevity, we will be
somewhat informal. 
Let $0<p<1$ and fix some integer $M>1.$ 
Consider $[0,1]^d$ and divide it into $M^d$ closed boxes of side 
length $M^{-1}$ in the canonical way (so that only boundaries of neighboring 
boxes overlap). These boxes will be referred to as level-1 boxes
and we denote this collection by $\CX^{M,1}.$ A box in $\CX^{M,1}$ will 
be denoted by $X$ so that $X\in \CX^{M,1}.$ 
For each $X\in \CX^{M,1},$ retain $X$ independently with 
probability $p$ and otherwise discard it. Then, let $D^{M,1}$ be the 
set of 
boxes that were retained. By letting 
\[
\CD^{M,1}:=\bigcup_{X\in D^{M,1}} X,
\]
we see that $\CD^{M,1}$ is almost surely a random closed set.

In the second step, we again divide $[0,1]^d$ into boxes, but this 
time we divide it into $M^{2d}$ closed boxes of side 
length $M^{-2}$ in the canonical way. Again, we use $X$ to denote a box, and the collection 
$\CX^{M,2}$ will be referred to as level-2 boxes. Then, we independently 
retain each box with probability $p,$ and otherwise discard it.
The set of retained level-2 boxes is denoted by $D^{M,2},$
and we let  
\[
\CD^{M,2}:=\bigcup_{X\in D^{M,2}} X
\]
be the corresponding random closed set.

This procedure is repeated for $n=3,4,\ldots$ and we obtain 
a sequence $(\CD^{M,n})_{n\geq 1}$ of random closed sets. We can then 
define
\[
\CD^M:=\bigcap_{n=1}^\infty \CD^{M,n},
\]
which is a random fractal set. The latter is well defined by compactness. 
Note that $\CD^M=\CD^M(p)$ and that $\CD^M$ is a closed set.

In \cite{O96} it was 
proven that for any $M<\infty$ large enough, there exists $p<1$, 
such that with positive probability 
$\CD^M$ contains a connected component
(in this case a ``sheet'') separating $\{0\}\times [0,1]^{d-1}$ from 
$\{1\}\times [0,1]^{d-1}.$ To be precise, the latter means that there is no
path $\omega\subset [0,1]^d\cap (\CD^M)^c$ which intersects both 
$\{0\}\times [0,1]^{d-1}$ and $\{1\}\times [0,1]^{d-1}.$
Furthermore, it is easy to modify the argument in \cite{O96} to work 
for other geometries. In particular, the same proof works for showing
that with positive probability, $\{0\}\times [0,1]^{d-1}$ is separated 
from $\{1/3\}\times [0,1]^{d-1}$ by a sheet in $\CD^M.$ That is, there is 
a sheet separating the left-hand side from the right-hand side of the 
rectangle $[0,1/3]\times [0,1]^{d-1}$ according to the ordering with respect to the first coordinate axis. 
Let $M<\infty$ and $p<1$ be such that this separating event has positive 
probability and consider the annulus $[0,1]^d \setminus [1/3,2/3]^d.$
Then, let 
$R_1,\ldots,R_{2d}$ denote the overlapping and congruent copies of 
$[0,1/3]\times [0,1]^{d-1}$ that can be used to cover this annulus
in the natural way. That is, we let 
$R_1=[0,1/3]\times [0,1]^{d-1},
R_2=[2/3,1]\times [0,1]^{d-1}, R_3=[0,1]\times[0,1/3]\times [0,1]^{d-2}$
etc. Let $\Sep(R_n, \CD^M)$ denote the event that $(\CD^M)^c$ 
does not cross the rectangle $R_n$ in the short direction, and 
let $\Sep([1/3,2/3]^d,[0,1]^d,\CD^M)$ denote the event 
that $(\CD^M)^c$ does not cross the annulus 
$[0,1]^d \setminus [1/3,2/3]^d$ (see Figure \ref{fig:separation} for 
an illustration of these events).
\begin{figure}
\centering
\begin{subfigure}{.4\textwidth}
  \centering
  \includegraphics[scale=0.5,trim= 45mm 20mm 240mm 10mm, clip, page=1 ]{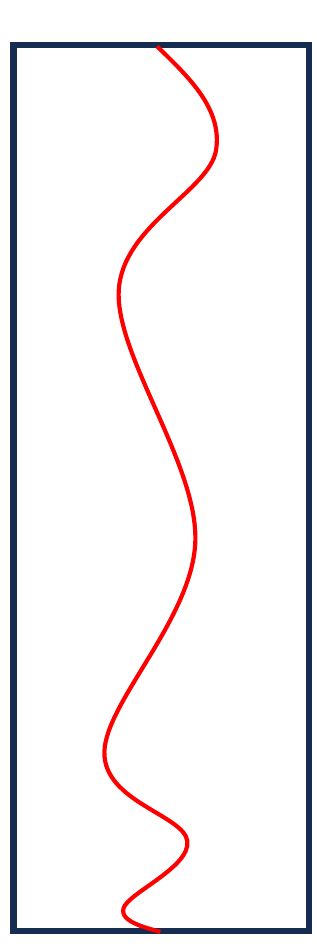} 
  \captionsetup{width=.8\linewidth}
  \caption{\small A sheet (red) in $\CD^M$ blocks a crossing in $(\CD^M)^c$
  of the rectangle in the short direction.}
\end{subfigure}%
\begin{subfigure}{.6\textwidth}
  \centering
  \includegraphics[scale=0.5,trim= 60mm 30mm 140mm 10mm, clip, page=2 ]{Sheets.pdf}
    \captionsetup{width=.8\linewidth}
  \caption{\small All four separation events in $R_1,R_2,R_3$ and $R_4$ occur so that 
  the annulus cannot be crossed by $(\CD^M)^c$.}
\end{subfigure}
\caption{}
\label{fig:separation}
\end{figure}
We see that by the FKG inequality \cite[Theorem B15]{Liggett},
\begin{equation}\label{eqn:sepprobab}
\BP(\Sep([1/3,2/3]^d,[0,1]^d,\CD^M))
\geq \prod_{n=1}^{2d}\BP(\Sep(R_n,\CD^M))
=\BP(\Sep(R_1,\CD^M))^{2d}>0,
\end{equation}
where the equality follows from rotational and 
translational invariance, and the last inequality by our choice of $M$ and $p$.
We note that the application of the FKG inequality uses that the events 
$\Sep(R_n,\CD^M)$ are decreasing in the configuration of boxes. Indeed, 
if a crossing by $(\CD^M)^c$ occurs, then retaining more boxes may only destroy 
this crossing.

We are now ready to prove Theorem \ref{thm:anncrossings}.
\begin{proof}[Proof of Theorem \ref{thm:anncrossings}]
Let $M<\infty$ and $p<1$ be close enough to $1$ so that \eqref{eqn:sepprobab} holds, and consider them now to be fixed. Our goal is 
to prove that for $\lambda>0$ small enough, we can couple the 
fractal ball model with the Mandelbrot fractal percolation 
model such that with probability one,
\begin{equation} \label{eqn:CCD}
\CD^M(p) \subset \CV(\lambda \mu^E).
\end{equation}
Note that while $\CD^M(p) \subset [0,1]^d,$ it is not the case that
$\CV(\lambda \mu^E)\subset [0,1]^d$. We will in fact only study 
$\CV(\lambda \mu^E)\cap [0,1]^d$ but this is not reflected in 
\eqref{eqn:CCD} as it is not necessary.
Assuming \eqref{eqn:CCD}, we conclude that 
\begin{eqnarray}\label{eqn:crossprobbasic}
\lefteqn{\BP(\Cr(\CA(1/6,1/2),\CC(\lambda\mu^E)))}\\
& & =1-\BP(\Sep([-1/6,1/6]^d,[-1/2,1/2]^d,\CV(\lambda\mu^E))) \nonumber \\
& & =1-\BP(\Sep([1/3,2/3]^d,[0,1]^d,\CV(\lambda\mu^E))) \nonumber \\
& & \leq 1-\BP(\Sep([1/3,2/3]^d,[0,1]^d,\CD^M(p)))<1, \nonumber
\end{eqnarray}
where we used translational invariance in the second equality,
\eqref{eqn:CCD} in the first inequality and \eqref{eqn:sepprobab}
in the last inequality. Here, we let
$\Sep([-1/6,1/6]^d,[-1/2,1/2]^d,\CV(\lambda\mu^E))$ and
$\Sep([1/3,2/3]^d,[0,1]^d,\CV(\lambda\mu^E))$ 
denote separation events as indicated by the notation, analogous to the 
separation events defined above for $\CD^M(p)$.

From this, the statement follows by a straightforward tiling and scaling
argument as we now explain (see also Figure \ref{Fig:Tiling} for an illustration 
of the idea). Similar to \eqref{eqn:Cdeffracball}, let for 
$\epsilon>0,$
\[
\left(\CC(\lambda\mu^E)\right)_\epsilon
=\bigcup_{(x,r)\in \Psi(\lambda\mu^E): r\leq \epsilon} B(x,r),
\]
which is the set covered by the balls in $\Psi(\lambda\mu^E)$ with radius at 
most $\epsilon.$ We note that by the scaling invariance described in Section 
\ref{sec:Notationfracball} we have that for any $N\geq 0,$
\begin{equation} \label{eqn:annscaling}
\BP(\Cr(\CA(3^{-N-1}/2,3^{-N}/2),(\CC(\lambda\mu^E))_{3^{-N}}))
=\BP(\Cr(\CA(1/6,1/2),\CC(\lambda\mu^E))).
\end{equation}
Next, let 
\[
\CE_N=\{\not \exists (x,r)\in \CC(\lambda\mu^E): 
B(x,r)\cap \CA(3^{-N-1}/2,3^{-N}/2) \neq  \emptyset, r>3^{-N}\}.
\]
In words, $\CE_N$ is the event that there are no balls of the Poisson process of radius larger than 
$3^{-N}$ touching the annulus $\CA(3^{-N-1}/2,3^{-N}/2).$
Next, we observe that 
\begin{align}
\BP(\Cr(&\CA(3^{-N-1}/2,3^{-N}/2),\CC(\lambda\mu^E)))\notag\\
& \leq \BP(\Cr(\CA(3^{-N-1}/2,3^{-N}/2),\CC(\lambda\mu^E))|\CE_N)\BP(\CE_N)
+\BP(\CE_N^c) \notag\\
 & = \BP(\Cr(\CA(3^{-N-1}/2,3^{-N}/2),(\CC(\lambda\mu^E))_{3^{-N}})\BP(\CE_N)
+\BP(\CE_N^c) \notag\\
 & =\BP(\Cr(\CA(1/6,1/2),\CC(\lambda\mu^E)))\BP(\CE_N)
+\BP(\CE_N^c)<1.\label{eq:19_12_24}
\end{align}
where we used the independence and superposition properties of Poisson processes (see Theorem 3.3 in \cite{LP18}) in the penultimate equality, 
\eqref{eqn:annscaling} in the last equality 
and \eqref{eqn:crossprobbasic} in the last inequality.
{Next, note that for fixed $a<b,$ one can first choose $N<\infty$
so that $b-a>3^{-N}$ and then consider a set $\mathcal{T}_N$ of copies of 
the annulus $\CA(3^{-N-1}/2,3^{-N}/2)$, such that the annuli are centred
along the boundary of the box $\left[-\frac{a+b}{2},\frac{a+b}{2}\right]^d$ and are covering the boundary. 
If none of these annuli are crossed by $\CC(\lambda\mu^E)$, 
then neither is $\CA(a,b)$. Hence, we have
\begin{align*}
\BP(\Cr(\CA(a,b),\CC(\lambda\mu^E))^c)&\ge \BP(\text{None of annuli from $\mathcal{T}_N$ are crossed})\\
&\ge \BP(\Cr(\CA(3^{-N-1}/2,3^{-N}/2),\CC(\lambda\mu^E))^c)^{|\mathcal{T}_N|}>0,
\end{align*}
as follows by the FKG inequality \cite[Theorem B15]{Liggett} together with the translation invariance of the process $\Psi(\lambda\mu^E)$ and \eqref{eq:19_12_24}. Application of the FKG inequality is justified by the fact that the event $\Cr(\CA(a,b),\CC(\lambda\mu^E))^c$ is decreasing in the configuration of boxes.}

\begin{figure}[t]
	\begin{center}
		\includegraphics[scale=0.5,trim= 20mm 20mm 40mm 10mm, clip ]{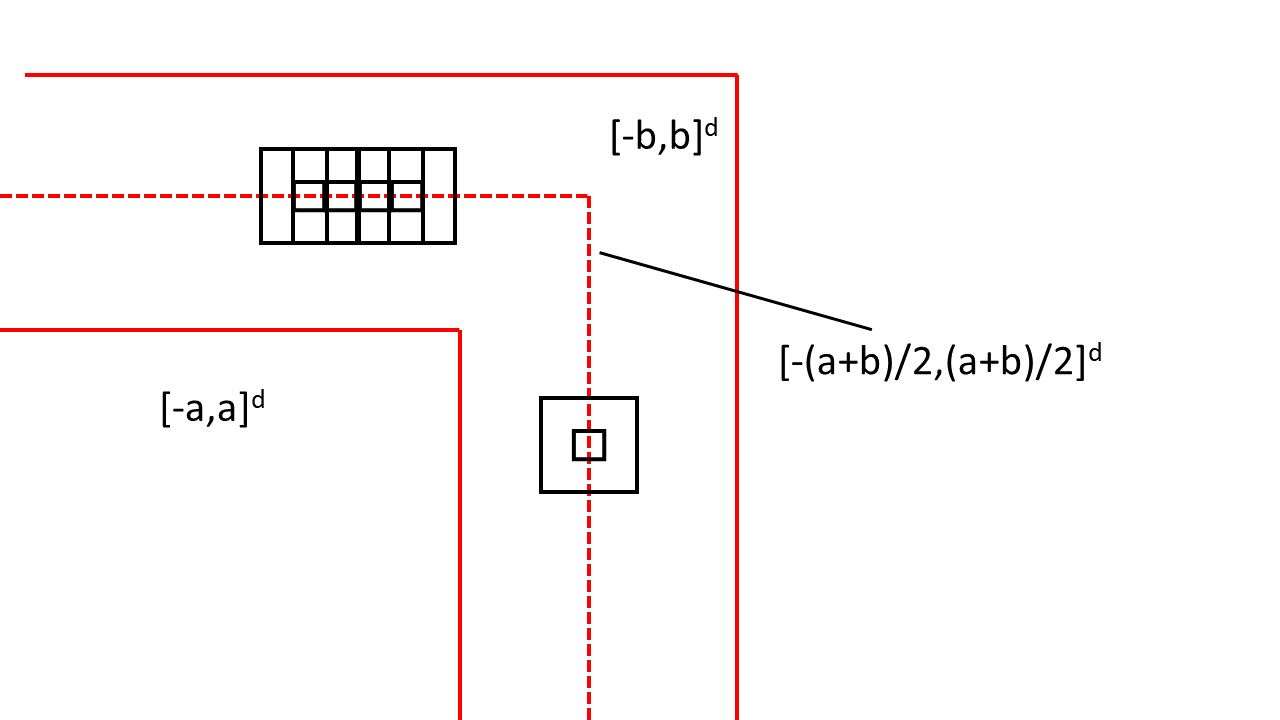} 
	\end{center}
	\caption{\small A depiction of the tiling. The solid red segments are parts
	of $\partial [-a,a]^d$ and $\partial [-b,b]^d,$ while the dashed segment is a part of $\partial [-(a+b)/2,(a+b)/2]^d$. The single black annulus
	is a copy of $\CA(3^{-N-1}/2,3^{-N}/2),$ while on the top
	dashed line we see four such copies next to each other. If the entire
	boundary $\partial [-(a+b)/2,(a+b)/2]^d$ is tiled in this way, a 
	crossing of  $\CA(a,b)$  implies that there exists a crossing of 
	at least one of the copies of $\CA(3^{-N-1}/2,3^{-N}/2).$ We note that 
	there may be some technicalities involved when performing the tiling at the corners,
	but these are easily resolved. It is also easy to generalize this 
	to any dimension $d\geq 2.$ }
	\label{Fig:Tiling}
\end{figure}

We therefore turn to establishing \eqref{eqn:CCD}.
To that end, let 
\begin{equation} \label{eqn:Psin}
\left(\Psi(\lambda \mu^E)\right)_{n-1}^{n}
:=\{(x,r)\in \Psi(\lambda \mu^E): M^{-n}<r\leq M^{-n+1} \},
\end{equation}
for some $M>1$ and then let 
\[
\left(\CV(\lambda \mu^E)\right)_{n-1}^{n}
=\BR^d \setminus 
\left(\bigcup_{(x,r)\in \left(\Psi(\lambda \mu^E)\right)_{n-1}^{n}} B(x,r)
\right)
\]
so that 
\[
\CV(\lambda \mu^E)
=\bigcap_{n=1}^\infty \left(\CV(\lambda \mu^E)\right)_{n-1}^{n}.
\]
Next, define the sequence $\tilde{\CD}^{M,1},\tilde{\CD}^{M,2}, \ldots$  
in the following way. Let 
\[
\tilde{D}^{M,n}:=\{X\in \CX^{M,n}: 
\not \exists (x,r)\in \left(\Psi(\lambda \mu^E)\right)_{n-1}^{n}
\textrm{ such that } B(x,r)\cap X \neq \emptyset\},
\]
so that $\tilde{D}^{M,n}$ consists of those level-$ n $ boxes which 
are untouched by the balls corresponding to 
$\left(\Psi(\lambda \mu^E)\right)_{n-1}^{n}.$ If we let 
\[
\tilde{\CD}^{M,n}:=\bigcup_{X\in \tilde{D}^{M,n}}X, 
\]
then clearly we have that almost surely
\[
\left(\CV(\lambda \mu^E)\right)_{n-1}^{n}\supset  \tilde{\CD}^M_{n},
\]
and since this holds for any $n=1,2,\ldots$ we conclude that 
\begin{equation} \label{eqn:VEtildeDdom}
\CV(\lambda \mu^E)\supset  \tilde{\CD}^M
:=\bigcap_{n=1}^\infty\tilde{\CD}^{M,n}
\end{equation}
with probability one.
We note that $\tilde{\CD}^M$ is similar to the Mandelbrot 
fractal percolation model, but in contrast to that model, we here 
have dependencies for different $X\in \CX^{M,n}$ since a ball 
corresponding to $(x,r)\in\left(\Psi(\lambda \mu^E)\right)_{n-1}^{n}$ can 
intersect multiple boxes $X\in\CX^{M,n}$. We will now address this through 
a domination argument.

First, we note that for any $n$ and any $X\in \CX^{M,n}$ 
the probability
\[
\BP\left(\exists (x,r)\in \left(\Psi(\lambda \mu^E)\right)_{n-1}^{n}
\textrm{ such that } B(x,r)\cap X \neq \emptyset \right),
\]
is the same. The non-dependency on $n$ follows by scaling invariance as explained in Section \ref{sec:Notationfracball}, while the non-dependency on $X\in \CX^{M,n}$ follows 
from translation invariance. It therefore makes sense to define
\begin{equation} \label{eqn:tildepdef}
\tilde{p}
:=\BP\left(\exists (x,r)\in \left(\Psi(\lambda \mu^E)\right)_{n-1}^{n}
\textrm{ such that } B(x,r)\cap X \neq \emptyset \right).
\end{equation}
Next, as observed, although the marginal probabilities that 
$X,X'\in \CX^{M,n}$ belong to $\tilde{\CD}^M_n$ are both $\tilde{p},$ 
the events are not independent whenever $X,X'$ are close.
However, the events are independent whenever the distance between 
$X,X'$ is larger than $2M^{-n+1},$ since 
$\left(\Psi(\lambda \mu^E)\right)_{n-1}^{n}$ only contains 
balls of radius at most $M^{-n+1}.$
Because of this finite range dependency, we can use
\cite[Theorem B26]{Liggett}
to see that there exists some $0<p<1$ and 
a coupling between $(\tilde{\CD}^{M,n}(\tilde{p}))_{n\geq 1}$ 
and $(\CD^{M,n} (p))_{n\geq 1}$ such that 
\begin{equation} \label{eqn:tildeDdom1}
\tilde{\CD}^{M,n}(\tilde{p}) \supset \CD^{M,n} (p),
\end{equation}
almost surely for $n=1,2,\ldots$ Furthermore, by the same theorem, for any fixed 
$p<1,$ we can make \eqref{eqn:tildeDdom1} hold by taking
$\tilde{p}$ large enough. 
Thus, for our choice of $p$ (that we made
at the start), we let $\tilde{p}$ be determined so 
that \eqref{eqn:tildeDdom1}
holds. Then, we let $\lambda>0$ be such that \eqref{eqn:tildepdef} 
holds. We conclude that almost surely
\begin{equation} \label{eqn:tildeDdom2}
\tilde{\CD}^M(\tilde{p}) 
=\bigcap_{n=1}^\infty\tilde{\CD}^{M,n}(\tilde{p})
\supset \bigcap_{n=1}^\infty \CD^{M,n}(p)
=\CD^M (p). 
\end{equation}
Thus, combining \eqref{eqn:VEtildeDdom} and \eqref{eqn:tildeDdom2} 
we can conclude \eqref{eqn:CCD}.
\end{proof}

\section{Proof of Theorem \ref{thm:disconnect}}\label{sec:Proof2}
We now have all the main tools needed to prove Theorem 
\ref{thm:disconnect}. However, there are several technical 
details that needs to be addressed, and therefore this section will 
consist of a number of incremental results. 
The general idea is to project the caps induced by the process 
$\Psi(\lambda \mu_r^{s,+})$, defined in Section \ref{sec:inducedBool},
from the surface of the sphere onto $\BR^{d-1}.$ We then want to use
our results about the fractal ball model in order to draw a conclusion 
about the lack of connectivity of $\CC(\lambda \mu_r^{s,+}).$ We note 
that until now, the fractal ball model has been studied 
in $\BR^{d},$ while here it will ``live'' in $\BR^{d-1}.$ Therefore, 
while $r^{-d-1} \dint r$ in \eqref{eqn:muEdef} yields
a semi-scale invariant fractal ball model as discussed in Section 
\ref{sec:Notationfracball}, the corresponding factor here  is
$r^{-d} \dint r.$

Recall the injective projection 
$\Pi_{\rm vert}: \BS^{d-1}_{+} \to B^{d-1}(o,1)$ from 
Section \ref{sec:Notation}.
In order to understand the technical difficulties we are facing, 
and in order to explain below why we are doing certain things,
we note somewhat informally that 
\begin{itemize}
\item[(a)] A cap on $\BS_+^{d-1}$ which is projected onto $\BR^{d-1}$ using $\Pi_{\rm vert}$ is 
not a ball, but an ellipsoid.

\item[(b)] The collection of projected center points
 $\{\Pi_{\rm vert}(x): (x,\alpha) \in \Psi(\lambda \mu_r^{s,+})\}$
will be a subset of $B^{d-1}(o,1).$ In contrast, the fractal ball model 
defined in Section \ref{sec:Notationfracball} was defined in all
of $\BR^{d-1}$.

\item[(c)] The collection of projected center points will look
qualitatively different in the neighbourhood of $o\in \BR^{d-1}$ compared to the neighbourhood of
the boundary $\partial B^{d-1}(o,1).$ 
 
\end{itemize}

Below, we will prove Theorem \ref{thm:disconnect2} from which Theorem 
\ref{thm:disconnect} will easily follow. In order to state Theorem 
\ref{thm:disconnect2}, let 
\begin{equation} \label{eqn:mucsdef}
\widetilde\mu^s_{g}(\dint (x,\alpha)):=\sigma_{d-1}(\dint x) 
\times  {\bf 1}\left(0<\alpha \leq C_d\right)g(\alpha)
\dint \alpha,
\end{equation}
where $C_d:=(2\sqrt{2(d-1)})^{-1}$ and $g(\alpha)$ is
such that $C'\alpha^{-d}\leq g(\alpha)\leq C\alpha^{-d}$ for some $C',C\in(0,\infty)$. 
We then define the Poisson  process 
$\Psi(\lambda \widetilde\mu^s_{g})$ on 
$\BS^{d-1} \times (0,C_d]$ as usual
along with the set $\CC(\lambda \widetilde\mu^s_{g}).$ As will become clear later, 
the reason for introducing this class of measures is to address 
$(b)$ and $(c)$ in the list of technical difficulties above.

\begin{theorem} \label{thm:disconnect2}
For any measure $\widetilde\mu^s_{g}$ defined as above,
we have that for $\lambda>0$ small enough, 
\[
\BP(\CC(\lambda \widetilde\mu^s_{g})\textrm{ is not connected})>0.
\]
\end{theorem}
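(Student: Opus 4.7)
The plan is to push the problem down to $\BR^{d-1}$ via the vertical projection $\Pi_{\rm vert}$ and then exploit Theorem \ref{thm:anncrossings}, which furnishes a positive-probability ``vacant sheet'' in the fractal ball model. Let $N=(0,\ldots,0,1)$ denote the north pole. First, I would fix some $r_0\in(0,\pi/2)$ (say $r_0=\pi/4$) and consider the thinned Poisson process consisting of the pairs $(u,\alpha)\in\Psi(\lambda\widetilde\mu^s_g)$ with $u\in\scap(N,r_0-C_d)$, which is again a Poisson process by the restriction theorem. On the larger polar cap $\scap(N,r_0)$ the map $\Pi_{\rm vert}$ is a bilipschitz diffeomorphism onto its image (with constants depending only on $r_0$), so the image of each such spherical cap contains a Euclidean ball $B^{d-1}(\Pi_{\rm vert}(u),c\alpha)$ and is contained in a Euclidean ball $B^{d-1}(\Pi_{\rm vert}(u),C\alpha)$ for constants $0<c<C<\infty$ depending only on $d$ and $r_0$. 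Since $\alpha\le C_d$ is small, all such images lie in a fixed compact subset of $B^{d-1}(o,1)$.

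Second, I would compare the pushforward point process in $\BR^{d-1}$ with the fractal ball model $\Psi(\bar\lambda\mu^E)$ defined in \eqref{eqn:muEdef} but in dimension $d-1$, so that the radius intensity is $r^{-d}\,dr$. Using the upper bound $g(\alpha)\le C\alpha^{-d}$ together with the fact that the Jacobian of $\Pi_{\rm vert}$ is uniformly bounded on $\scap(N,r_0)$, the image intensity is dominated by a multiple $\bar\lambda=K(d,C)\lambda$ of $\mu^E$ restricted to a compact subset of $\BR^{d-1}$. Fix $0<a<b$ so that $[-b,b]^{d-1}\subset\Pi_{\rm vert}(\scap(N,r_0-C_d))$ and choose $\lambda$ small enough that $\bar\lambda$ lies below the threshold supplied by Theorem \ref{thm:anncrossings} applied in dimension $d-1$. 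Then with positive probability the annulus $\CA(a,b)$ is not crossed by the dominating fractal ball process, and hence, by the ball sandwich above, the spherical annular belt $\mathcal{B}:=\Pi_{\rm vert}^{-1}(\CA(a,b))\subset\BS^{d-1}$ is not crossed by any cap of the original process whose center lies in $\scap(N,r_0-C_d)$.

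Third, to deal with caps centered outside $\scap(N,r_0-C_d)$, I would use Mecke's formula to show that the expected number of caps with center in this complementary region that touch the belt $\mathcal{B}$ is bounded by a constant multiple of $\lambda$. Indeed, only caps of radius at least some fixed $\delta>0$ (the spherical distance from $\mathcal{B}$ to $\partial\scap(N,r_0-C_d)$) can reach $\mathcal{B}$, and the integral $\int_\delta^{C_d}g(\alpha)\,d\alpha$ is finite by $g(\alpha)\le C\alpha^{-d}$. Hence for $\lambda$ small enough, with positive probability no such cap touches $\mathcal{B}$. Intersecting this event with the positive-probability event that there is at least one cap contained inside $\Pi_{\rm vert}^{-1}([-a,a]^{d-1})$ and at least one cap contained outside $\Pi_{\rm vert}^{-1}([-b,b]^{d-1})$ (and with the non-crossing event of the second paragraph), all of which involve disjoint regions of the underlying Poisson process up to an FKG-type combination \cite[Theorem B15]{Liggett}, one obtains that $\CC(\lambda\widetilde\mu^s_g)$ has at least two distinct connected components with positive probability.

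The main obstacle is the quantitative geometric comparison of the first step: establishing that projected spherical caps on a polar cap of $\BS^{d-1}$ are uniformly comparable to Euclidean balls of proportional radius, so that the Mandelbrot-percolation sheet argument behind Theorem \ref{thm:anncrossings} transfers cleanly. The specific value $C_d=(2\sqrt{2(d-1)})^{-1}$ is presumably dictated at this stage to force the spherical radii to be small enough that the linearization of $\Pi_{\rm vert}$ is accurate, so that the sandwich constants $c$ and $C$ can be taken uniform over the relevant polar cap and the pushforward intensity can be dominated by a constant multiple of $\mu^E$. Once this geometric comparison is in hand, the remainder of the argument reduces to invoking Theorem \ref{thm:anncrossings} and combining finitely many positive-probability events via independence.
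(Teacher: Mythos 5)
Your proposal is correct and follows the same route as the paper's: project the cap process to $\BR^{d-1}$ via $\Pi_{\rm vert}$, dominate the pushforward by the $(d-1)$-dimensional fractal ball model using $g(\alpha)\leq C\alpha^{-d}$ together with the bounded Jacobian of the projection on a region away from the equator, and invoke Theorem \ref{thm:anncrossings} to produce an uncrossed annulus. The differences are organizational rather than conceptual. The paper (via Proposition \ref{prop:inclusion} and Corollary \ref{corr:anncrossings}) works on the box $[-2C_d,2C_d]^{d-1}$ and places the annulus $\CA(1/(4d),1/(2d))$ so deep inside it that no cap of radius at most $C_d$ centered outside the relevant region can reach the annulus at all; it then argues by contradiction, using that $g(\alpha)\geq C'\alpha^{-d}$ forces every fixed point of $\BS^{d-1}$ to be covered a.s., so that connectedness would force a crossing. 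You instead build the disconnection event directly (vacant belt plus occupied caps strictly inside and strictly outside) and control caps centered outside the polar cap by a separate first-moment bound on caps of radius at least $\delta$; this extra step is valid but is exactly what the paper's choice of constants renders unnecessary, while conversely the paper needs the a.s.-coverage observation that you replace by existence events (which, by the lower bound on $g$, in fact have probability one). Two small points. First, the bilipschitz lower bound in your first step is never used: only the one-sided containment $\Pi_{\rm vert}(\scap(u,\alpha))\subset B^{d-1}(\Pi_{\rm vert}(u),\alpha)$ is needed, and it holds with constant $1$ on all of $\BS^{d-1}_+$, since dropping the last coordinate contracts the Euclidean distance, which is itself at most the spherical distance. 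Second, FKG is not the right tool for combining your events, as the non-crossing and non-touching events are decreasing while the existence events are increasing; but no correlation inequality is needed, because the two decreasing events are determined by caps with centers in disjoint regions of the sphere (hence are independent), and the existence events occur almost surely.
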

Before addressing the proof of 
Theorem \ref{thm:disconnect2}, we will demonstrate how 
Theorem \ref{thm:disconnect} follows from it.
\begin{proof}[Proof of Theorem \ref{thm:disconnect} from 
Theorem \ref{thm:disconnect2}]
By Lemma \ref{lemma:measure}, the intensity measure 
$\lambda\mu_{c,f}^{s}$, defined by \eqref{eq:MuFC} with $f\equiv 0$ and $c=2\sinh(1)+1$
is of the form \eqref{eqn:mucsdef}, but with 
${\bf 1}(0<\alpha\leq C_d)$ replaced by 
${\bf 1}\left(0<\alpha\leq c\pi/\max(2,c)\right)$. 
However, this is easily circumvented 
as follows. Let 
\[
G=\left\{\not \exists (x,r)\in \Psi( \lambda\mu_{c,f}^s): 
r\geq C_d\right\},
\]
and observe that a straightforward calculation shows that $\BP(G)>0.$ 
Furthermore, 
it is an easy consequence of the independence and superposition properties of Poisson processes (see Theorem 3.3 in \cite{LP18})
that conditioning on the event $G$ is equivalent to letting 
$g_c(\alpha)$ be defined on 
$(0,C_d]$ rather than on $(0,c\pi/\max(2,c)].$ Then conditioning on $G$ by Lemma \ref{lemma:measure} we also have
\[
C'=c^{d-1}\cos(C_d/c)^{d-2}\leq g(\alpha)\leq C=c^{d-1}(\pi/2)^d,\qquad\text{ for }\qquad \alpha\in(0,C_d].
\] 
Therefore, by using Theorem \ref{thm:disconnect2} we get
\begin{eqnarray} \label{eqn:notconnmon1}
\lefteqn{\BP(\CC(\lambda\mu_{c,f}^{s}) \textrm{ is not connected})}\\
& &\geq \BP(\CC(\lambda\mu_{c,f}^{s}) \textrm{ is not connected} |G)
\BP(G)
= \BP(\CC(\lambda \mu_{c}^{s}) \textrm{ is not connected})
\BP(G)>0. \nonumber
\end{eqnarray}
Then, we can 
use \eqref{eqn:C+inclusion}  
to conclude that 
\begin{equation} \label{eqn:notconnmon2}
\BP(\CC(\lambda \mu_r^{s,+}) \textrm{ is not connected})
\geq \BP(\CC(\lambda\mu_{c,f}^{s}) \textrm{ is not connected})
\end{equation}
for $r>{1\over 2}\big(\tan\big({e\pi\over e^2-e+1}\big)\big)^{-1}$. Combining \eqref{eqn:notconnmon1} and \eqref{eqn:notconnmon2}
gives us the result.
\end{proof}

Consider now the following variant of the fractal ball model 
where we use the intensity measure $\widetilde\mu^E_g$ on 
\[
\left[-2C_d,2C_d\right]^{d-1}
\times \left(0,C_d\right]
\subset 
\BR^{d-1}\times (0,1],
\]
defined by 
\begin{eqnarray} \label{eqn:mucEdef} 
\widetilde\mu^E_g(\dint (x,r))={\bf 1}\left(x\in 
\left[-2C_d,2C_d\right]^{d-1}\right)
\ell_{d-1}(\dint x) {\bf 1}\left(0<r\leq  C_d\right]
g(r)\dint r,
\end{eqnarray}
where $g(r)\leq C r^{-d}$ for some $C<\infty$  and $\ell_{d-1}$ 
is Lebesgue measure in $\BR^{d-1}$. 
The reason for restricting our attention to the above geometry will 
become clear later on. We then let 
$\Psi(\lambda \widetilde\mu^E_g)$ be as $\Psi(\lambda \mu^E)$ of Section 
\ref{sec:Preliminaries}, but using the intensity measure $\widetilde\mu^E_g$ 
in place of $\mu^E.$ 
As above, we note that here we are dealing with 
a ball process in $\BR^{d-1}$ rather than in $\BR^d,$ and that the
factor $r^{-d-1}\dint r$ in the definition \eqref{eqn:muEdef} 
of $\mu^E$ has been replaced by 
$g(r) \dint r.$ Intuitively, $\Psi(\lambda \widetilde\mu^E_g)$ is 
a fractal ball process 
in $\BR^{d-1}$ restricted to balls with centers in $[-2C_d,2C_d]$ and of radius in the interval 
$\left(0,C_d\right].$ This ball process is not semi-scale 
invariant, but as we will see, it is ``almost'' semi-scale invariant. 
In order to state the following corollary to Theorem 
\ref{thm:anncrossings}, let $\CC(\lambda \widetilde\mu^E_g)$ be defined in the 
obvious way. 

\begin{corollary} \label{corr:anncrossings}
For any  $0<a<b<C_d,$ 
we have that for $\lambda>0$ small enough,
\[
\BP(\Cr (\CA(a,b),\CC(\lambda\widetilde\mu^E_g)))<1.
\]
\end{corollary}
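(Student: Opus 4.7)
The plan is to dominate the measure $\widetilde\mu^E_g$ by the natural semi-scale invariant intensity in $\BR^{d-1}$ and then invoke Theorem \ref{thm:anncrossings} in one dimension lower. As the authors explicitly flag at the beginning of Section \ref{sec:fracball}, Theorem \ref{thm:anncrossings} (and its proof, which only uses semi-scale invariance and a domination by Mandelbrot fractal percolation) carries over verbatim from $\BR^d$ to $\BR^{d-1}$, the only substantive change being that the semi-scale invariant radial density in $\BR^{d-1}$ is $r^{-d}$ rather than $r^{-(d+1)}$. Denote this intensity measure by
\[
\mu^E_{d-1}(\dint(x,r)) := \ell_{d-1}(\dint x) \times \mathbf{1}(0 < r \leq 1)\, r^{-d}\, \dint r,
\]
and let $\lambda^* > 0$ be an intensity for which the $\BR^{d-1}$ version of Theorem \ref{thm:anncrossings} holds.

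The key observation is that the hypothesis $g(r) \leq C r^{-d}$ gives, for each $\lambda > 0$, the measure inequality
\[
\lambda \widetilde\mu^E_g \;\leq\; \lambda C\, \mu^E_{d-1}
\]
on $\BR^{d-1} \times (0,1]$: the spatial restriction of $\widetilde\mu^E_g$ to $[-2C_d, 2C_d]^{d-1}$ and the radial restriction to $(0, C_d]$ only shrink its support. I would then invoke the standard monotone coupling of Poisson processes---realize the difference $\lambda C\, \mu^E_{d-1} - \lambda \widetilde\mu^E_g$ as an independent Poisson process and superpose---to place $\Psi(\lambda \widetilde\mu^E_g)$ and $\Psi(\lambda C\, \mu^E_{d-1})$ on a common probability space with $\Psi(\lambda \widetilde\mu^E_g) \subset \Psi(\lambda C\, \mu^E_{d-1})$ almost surely. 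This gives the set containment $\CC(\lambda \widetilde\mu^E_g) \subset \CC(\lambda C\, \mu^E_{d-1})$, and consequently any path of connected balls from $\Psi(\lambda \widetilde\mu^E_g)$ crossing the annulus $\CA(a,b)$ is automatically such a path in the larger process.

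To finish, I would choose $\lambda$ small enough that $\lambda C \leq \lambda^*$ (using monotonicity of the crossing probability in the intensity, which in turn follows from the same sort of coupling). The $\BR^{d-1}$ version of Theorem \ref{thm:anncrossings} then yields
\[
\BP(\Cr(\CA(a,b), \CC(\lambda \widetilde\mu^E_g))) \;\leq\; \BP(\Cr(\CA(a,b), \CC(\lambda C\, \mu^E_{d-1}))) \;<\; 1,
\]
as required. I do not expect any real obstacle here: the bulk of the work (Theorem \ref{thm:anncrossings} itself, with its reduction to Mandelbrot fractal percolation) has already been done, and what remains is purely a clean domination argument. The restriction $a, b < C_d$ in the statement is not used in this upper-bound direction; it merely reflects the natural range in which the corollary will be applied later in Section \ref{sec:Proof2}.
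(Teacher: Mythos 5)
Your proof is correct and follows the same basic strategy as the paper's: dominate the intensity using $g(r)\leq Cr^{-d}$ and reduce to Theorem \ref{thm:anncrossings} in $\BR^{d-1}$ (whose proof indeed transfers verbatim, with $r^{-d}$ replacing $r^{-(d+1)}$). The execution differs in one respect worth noting. The paper keeps the spatial restriction to $[-2C_d,2C_d]^{d-1}$ and the radial cutoff at $C_d$ throughout, rescales space by $C_d^{-1}$ to identify the resulting process exactly with the centre-restricted model $\tilde{\CC}(\lambda C\mu^E)$, and then must argue that the restriction of centres to $[-2,2]^{d-1}$ is invisible to crossings of the rescaled annulus because $C_d^{-1}b\leq 1$ and the radii are bounded by $1$ --- this is precisely where the hypothesis $b<C_d$ enters the paper's argument. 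You instead drop all restrictions at once via the pointwise measure inequality $\widetilde\mu^E_g\leq C\,\mu^E_{d-1}$ (valid since $C_d<1$ and the indicators only shrink the support), which is legitimate because you are proving an upper bound on an increasing event, and it lets you invoke Theorem \ref{thm:anncrossings} directly for the annulus $\CA(a,b)$ without rescaling; your observation that $b<C_d$ is then not needed for this direction is accurate. The remaining ingredients --- the monotone superposition coupling and the monotonicity of the crossing probability in $\lambda$ needed to pass from ``there exists $\lambda^*$'' to ``for all $\lambda C\leq\lambda^*$'' --- are exactly as in the paper, so there is no gap.
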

\begin{proof}
Recall, that by our assumptions $g(r)$ satisfies 
\begin{equation} \label{eqn:gcbound}
g(r)\leq C r^{-d},
\end{equation}
for some $C<\infty.$
Let $\tilde{\mu}_C^E$ be defined similarly to 
$\widetilde\mu^E_g$ in \eqref{eqn:mucEdef}, but where $g(r)$ is replaced
by $Cr^{-d}.$ Furthermore, let $\Psi(\lambda\tilde{\mu}_C^E)$ be 
the corresponding 
Poisson  process of caps and let $\CC(\lambda \tilde{\mu}_C^E)$
be defined as usual. Using \eqref{eqn:gcbound}, it 
is an easy consequence of stochastic monotonicity that
there exists a coupling such that 
$\CC(\lambda\widetilde\mu^E_g) \subset \CC(\lambda \tilde{\mu}_C^E)$
with probability one, and so  
\begin{equation} \label{eqn:gcCineq}
\BP(\Cr(\CA(a,b),\CC(\lambda\widetilde\mu^E_g)))
\leq \BP(\Cr(\CA(a,b),\CC(\lambda \tilde{\mu}_C^E))).
\end{equation}
Next, we scale space by a factor of $C_d^{-1},$ i.e. we 
consider the set 
\begin{equation}\label{eqn:scaledset}
\bigcup_{(x,r)\in \Psi(\lambda\tilde{\mu}_C^E)} 
C_d^{-1}B^{d-1}(x,r),
\end{equation}
and note that the radius of the scaled balls are now bounded by 1. 
Note also the factor $C r^{-d} \dint r$ from \eqref{eqn:gcbound} 
which, as explained
at the start of this section, corresponds to a semi-scale invariant 
fractal ball model in $\BR^{d-1}.$ The added constant $C$ just changes 
the intensity from $\lambda$ to $\lambda C$, therefore, the 
set in \eqref{eqn:scaledset}, has the same distribution as
\begin{equation}\label{eqn:Ctildedeffracball}
\tilde{\CC}(\lambda C \mu^E)
:=\bigcup_{\stackrel{(x,r)\in \Psi(\lambda C \mu^E)}
{x\in [-2,2]^{d-1}}} B^{d-1}(x,r),
\end{equation}
where $\Psi(\lambda C \mu^E)$ is as in Sections 
\ref{sec:Notationfracball} and \ref{sec:fracball}.
Therefore, we can conclude that 
\begin{equation} \label{eqn:prelequal1}
\BP(\Cr(\CA(a,b),\CC(\lambda \tilde{\mu}_C^E)))
=\BP(\Cr(\CA(C_d^{-1}a,C_d^{-1}b),
\tilde{\CC}(\lambda C \mu^E)).
\end{equation}

Since after scaling
the radii of the balls are bounded by 1, we can 
conclude that $\tilde{\CC}(\lambda C \mu^E)\cap [-1,1]^{d-1}$ and 
$\CC(\lambda C \mu^E)\cap [-1,1]^{d-1}$ are equal in distribution. 
The reason is simply that even though $\CC(\lambda C \mu^E)$ is defined
by \eqref{eqn:Cdeffracball} (with $d$ here replaced by $d-1$) 
rather than by 
\eqref{eqn:Ctildedeffracball}, no ball $B^{d-1}(x,r)$ with center 
$x \not \in [-2,2]^{d-1}$ can intersect $[-1,1]^{d-1}.$
Therefore, 
\begin{eqnarray}\label{eqn:prelequal2}
\lefteqn{\BP(\Cr(\CA(C_d^{-1}a,C_d^{-1}b),
\tilde{\CC}(\lambda C \mu^E)\cap[-1,1]^{d-1}))}\\
& & =\BP(\Cr(\CA(C_d^{-1}a,C_d^{-1}b),
\CC(\lambda C \mu^E)\cap[-1,1]^{d-1})). \nonumber
\end{eqnarray}
Combining \eqref{eqn:gcCineq}, \eqref{eqn:prelequal1} and 
\eqref{eqn:prelequal2}, we conclude that for any 
$0<a<b<C_d$ there exists $\lambda>0$
small enough so that 
\begin{eqnarray*}
\lefteqn{\BP(\Cr(\CA(a,b),\CC(\lambda\widetilde\mu^E_g)))
\leq \BP(\Cr(\CA(a,b),\CC(\lambda \tilde{\mu}_C^E)))}\\
& & =\BP(\Cr(\CA(C_d^{-1}a,C_d^{-1}b),
\tilde{\CC}(\lambda C \mu^E))) \\
& & =\BP(\Cr(\CA(C_d^{-1}a,C_d^{-1}b),
\tilde{\CC}(\lambda C \mu^E)\cap[-1,1]^{d-1}))\\
& & =\BP(\Cr(\CA(C_d^{-1}a,C_d^{-1}b),
\CC(\lambda C \mu^E)\cap[-1,1]^{d-1}))\\
& & =\BP(\Cr(\CA(C_d^{-1}a,C_d^{-1}b),
\CC(\lambda C \mu^E)))<1,
\end{eqnarray*}
where we used  the fact that the crossing event 
$\Cr(\CA(C_d^{-1}a,C_d^{-1}b)$ only depends on 
$\tilde{\CC}(\lambda C \mu^E)\cap[-1,1]^{d-1}$ (since
$C_d^{-1}b\leq 1$) in the second equality, and
similarly on $\CC(\lambda C \mu^E)\cap[-1,1]^{d-1}$
in the last equality. Finally, Theorem \ref{thm:anncrossings} is used 
in the last inequality.
\end{proof}

We will use Corollary \ref{corr:anncrossings} to prove 
that our cap process on $\BS^{d-1}$ is not connected whenever 
$\lambda>0$ is small enough. In order to do that, we will start by 
providing the following coupling result between our spherical 
cap process and the ball process in $\BR^{d-1}$ with intensity 
measure $\widetilde\mu^E_g.$ This will also deal with difficulty $(a)$ 
listed at the start of this section.

\begin{proposition} \label{prop:inclusion}
For measures $\widetilde\mu^{s}_g$ and $\widetilde\mu^E_g$ defined in \eqref{eqn:mucsdef}
and \eqref{eqn:mucEdef}, respectively, and using the same function $g,$
there exists a coupling of 
$\Psi(\lambda \widetilde\mu^{s}_g)$ and $\Psi(2 \lambda \widetilde\mu^E_g)$
such that almost surely,
\begin{align*}
&\left[-2C_d,2C_d\right]^{d-1} 
\cap \Pi_{\rm vert}\left(\CC(\lambda \widetilde\mu^{s}_g)\right)\subset \left[-2C_d,2C_d\right] \cap \CC(2 \lambda \widetilde\mu^E_g).
\end{align*}
\end{proposition}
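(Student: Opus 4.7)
The plan is to construct the coupling by pushing $\Psi(\lambda\widetilde\mu^s_g)$ forward under the map $(u,\alpha)\mapsto(\Pi_{\rm vert}(u),\alpha)$ restricted to caps centered in the upper hemisphere, and then to show that the resulting Poisson ball process is stochastically dominated by $\Psi(2\lambda\widetilde\mu^E_g)$.

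First I would establish the geometric inclusion: for every $(u,\alpha)$ with $u\in\BS^{d-1}_+$ and $\alpha\leq C_d$,
\[
\Pi_{\rm vert}\big(\scap(u,\alpha)\cap\BS^{d-1}_+\big)\subset B^{d-1}(\Pi_{\rm vert}(u),\alpha).
\]
This follows because for any $v\in\scap(u,\alpha)$ the chordal identity gives $\|v-u\|^2=2-2\cos d_s(u,v)\leq 2(1-\cos\alpha)\leq\alpha^2$, while $\Pi_{\rm vert}$ is the projection onto the first $d-1$ coordinates and hence $1$-Lipschitz, so $\|\Pi_{\rm vert}(v)-\Pi_{\rm vert}(u)\|_{d-1}\leq\alpha$.

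Next, I would show that caps with centers in $\BS^{d-1}_-$ contribute nothing to the intersection of the left-hand side with the cube, so they may be ignored. Any $v\in\BS^{d-1}_+$ with $\Pi_{\rm vert}(v)\in[-2C_d,2C_d]^{d-1}$ satisfies $\|\Pi_{\rm vert}(v)\|_{d-1}^2\leq(d-1)(2C_d)^2=1/2$, so $v_d\geq 1/\sqrt{2}$. If a cap of spherical radius $\alpha\leq C_d$ contains such a $v$, its center $u$ obeys $|u_d-v_d|\leq\|u-v\|\leq\alpha\leq C_d=(2\sqrt{2(d-1)})^{-1}$, whence
\[
u_d\geq\frac{1}{\sqrt{2}}-\frac{1}{2\sqrt{2(d-1)}}\geq\frac{1}{2\sqrt{2}}>0,
\]
so $u\in\BS^{d-1}_+$. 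This is the place where the precise value of $C_d$ is used in an essential way; any larger choice would force one to also include a correction for lower hemisphere caps straddling the equator, and I expect this hemisphere reduction to be the main geometric obstacle of the argument.

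Finally, I would apply the mapping theorem \cite[Theorem 5.1]{LP18} to the restriction of $\Psi(\lambda\widetilde\mu^s_g)$ to $\BS^{d-1}_+\times(0,C_d]$ under $(u,\alpha)\mapsto(\Pi_{\rm vert}(u),\alpha)$. Parametrizing $\BS^{d-1}_+$ by $y=\Pi_{\rm vert}(u)\in B^{d-1}(o,1)$, the surface-area element of $\sigma_{d-1}$ becomes $(1-\|y\|_{d-1}^2)^{-1/2}\ell_{d-1}(\dint y)$, so the image intensity is $\lambda(1-\|y\|_{d-1}^2)^{-1/2}g(\alpha)\,\ell_{d-1}(\dint y)\dint\alpha$. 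On the cube the Jacobian factor is at most $(1-1/2)^{-1/2}=\sqrt{2}<2$, so this image is dominated by $2\lambda\widetilde\mu^E_g$. Standard stochastic domination for Poisson processes with ordered intensities (realised by thinning a Poisson process of intensity $2\lambda\widetilde\mu^E_g$) then produces a coupling in which every induced ball $B^{d-1}(\Pi_{\rm vert}(u),\alpha)$ appears as a ball of $\Psi(2\lambda\widetilde\mu^E_g)$, and combining this with the first two steps yields the asserted inclusion.
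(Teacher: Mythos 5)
Your argument is essentially the paper's: the same cap-into-ball inclusion via the $1$-Lipschitz vertical projection, the same push-forward of the spherical intensity with the Jacobian bound $(1-\|y\|_{d-1}^2)^{-1/2}\le\sqrt{2}\le 2$ on the cube, and the same thinning/domination step, with your hemisphere reduction merely making explicit a point the paper glosses over. One caveat you share with the paper: neither argument accounts for caps whose centers project to points \emph{outside} $[-2C_d,2C_d]^{d-1}$ but within distance $C_d$ of it --- their projections can still meet the cube, while the coupled ball process only has centers inside the cube --- a boundary effect that is harmless for how the proposition is later applied (only the annulus $\CA(1/(4d),1/(2d))$, well inside the cube, is used), but which strictly speaking needs to be excluded or absorbed.
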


\noindent
\begin{proof}
We start by considering the map 
$\varphi: \BS^{d-1}_+\times \left(0,C_d\right] 
\to \BR^{d-1} \times \left(0,C_d\right]$
defined by 
\[
\varphi((x_1,\ldots,x_d),\alpha):=((x_1,\ldots, x_{d-1}),\alpha)
=(\Pi_{\rm vert}(x_1,\ldots,x_d),\alpha).
\]
For $y\in B^{d-1}(o,1)$ we will let 
$\Pi_{\rm vert}^{-1}(y)$
be the unique point $x\in \BS_+^{d-1}$ such that $y=\Pi_{\rm vert}(x).$
We note that 
$\varphi^{-1}((y,\alpha))=(\Pi_{\rm vert}^{-1}(y),\alpha).$

Consider the image $\varphi(\widetilde\mu^{s}_g)$ of the measure $\widetilde\mu^{s}_g$ 
under the mapping $\varphi$, and let $\mu_{\varphi}^E$ be the restriction 
of this image to the set
$[-2C_d,2C_d]^{d-1}
\times (0,C_].$ Then, for any measurable 
$
A \subset \left[-2C_d,2C_d\right]^{d-1}$ and $0<\beta_1<\beta_2\leq C_d
$
we have 
\begin{equation}\label{eqn:tildenu}
\mu_{\varphi}^E(A\times (\beta_1,\beta_2))
=\widetilde\mu^{s}_g(\varphi^{-1}(A\times (\beta_1,\beta_2)))
=\sigma_{d-1}(\Pi_{\rm vert}^{-1}(A))
\int_{\beta_1}^{\beta_2} 
g(\alpha)\dint\alpha,
\end{equation} 
by the definition of $\widetilde\mu^s_{g}.$ We see that $\mu_\varphi^E$ is 
similar to $\widetilde\mu^E_g$ defined in \eqref{eqn:mucEdef}, but instead 
of $(d-1)$-dimensional 
Lebesgue measure we here have a
non-homogeneous measure with respect to the $x$-component (this is technical difficulty $(b)$ in the list at the start of this section). 
However, it is easy to see that
\begin{equation} \label{eqn:sigmaprojA}
\sigma_{d-1}(\Pi_{\rm vert}^{-1}(A))
=\int_A (1-\|x\|^2_{d-1})^{-{1\over 2}} \,\ell_{d-1}(\dint x),
\end{equation}
and it is immediate from \eqref{eqn:sigmaprojA} that 
$\ell_{d-1}(A)\leq \sigma_{d-1}(\Pi_{\rm vert}^{-1}(A))
\leq 2\ell_{d-1}(A)$ 
for every 
$A \subset \left[-2C_d,2C_d\right]^{d-1}$ (which is the reason why 
we chose to restrict our attention to the given geometry). 
Therefore, for any such $A$
and $(\beta_1,\beta_2) \subset 
(0,C_d]$ we conclude that
\begin{eqnarray*}
\lefteqn{\lambda \mu_\varphi^E(A\times(\beta_1,\beta_2))
=\lambda \sigma_{d-1}(\Pi_{\rm vert}^{-1}(A))
\int_{\beta_1}^{\beta_2} 
g(\alpha)\dint\alpha}\\
& & \leq 2 \lambda \ell_{d-1}(A) \int_{\beta_1}^{\beta_2} g(\alpha)\dint \alpha
=2\lambda \widetilde\mu^E_g(A\times(\beta_1,\beta_2)).
\end{eqnarray*}
Hence, as follows from the mapping properties of the Poisson  process (see \cite[Theorem 5.1]{LP18}) we 
can couple 
$\Psi(\lambda \mu_\varphi^E)$ and $\Psi(2 \lambda \widetilde\mu^E_g)$ so that almost surely
\begin{equation}\label{eqn:Phiinclusion}
\Psi(\lambda \mu_\varphi^E) \subset \Psi(2 \lambda \widetilde\mu^E_g),
\end{equation}
where we recall the convention explained in Section \ref{sec:Notation} that 
the Poisson  processes are regarded as sets.
Intuitively, this means that the projection onto $\BR^{d-1}$ of the 
points $x$ such that 
$(x,\alpha)\in \Psi(\widetilde\mu^s_{g})$ and 
$\Pi_{\rm vert}(x)\in\left[-2C_d,2C_d\right]^{d-1}$, are
dominated by a homogeneous process of intensity $2\lambda$ in 
$\left[-2C_d,2C_d\right]^{d-1}$. 

It remains to ensure
that the projection of the actual spherical cap $\scap(x,\alpha)$ is 
a subset of the ball $B^{d-1}(\Pi_{\rm vert}(x),\alpha).$ 
This can easily be done as follows. Fix $x\in \BS^{d-1}_+$ such that  
$\Pi_{\rm vert}(x)\in\left[-2C_d,2C_d\right]^{d-1}.$
We claim that
\begin{equation} \label{eqn:capballinclusion}
\Pi_{\rm vert}(\scap((x_1,\ldots,x_d),\alpha)) 
\subset B^{d-1}((x_1,\ldots, x_{d-1}),\alpha).
\end{equation}
To see this, first note that for any point $x'\in \scap(x,\alpha),$
we have that the $d$-dimensional Euclidean distance satisfies
$\Vert x-x'\Vert\leq \alpha.$ This holds since the angle
between the center $x$ and the points on the boundary of 
$\scap(x,\alpha)$ equals $\alpha.$ It follows that for any $x'\in \scap(x,\alpha)$
we have that 
\[
\Vert \Pi_{\rm vert}(x)-\Pi_{\rm vert}(x')\Vert
=\Vert(x_1,\ldots, x_{d-1})-(x_1',\ldots, x_{d-1}')\Vert_{d-1}\leq \alpha
\] 
and so \eqref{eqn:capballinclusion} holds. 

Next, we observe that for any $\scap(x,\alpha)$
where $x\in \BS^{d-1}_{+}$ is such that $(x_1,\ldots,x_{d-1}) \not \in \left[-2C_d,2C_d\right]^{d-1},$ 
it holds that 
$$
\Pi_{\rm vert}(\scap(x,\alpha))\cap \left[-2C_d,2C_d\right]^{d-1}=\emptyset.
$$
Indeed, this follows by \eqref{eqn:capballinclusion} since 
$\Pi_{\rm vert}(\scap(x,\alpha))$ can be inscribed in a ball 
$B^{d-1}(\Pi_{\rm vert}(x),\alpha)$  
and $\alpha\leq C_d.$ We therefore see that 
\begin{eqnarray} \label{eqn:proj1}
\lefteqn{ \left[-2C_d,2C_d\right]^{d-1} \cap \Pi_{\rm vert}\left(\CC(\widetilde\mu^{s}_g)\right)}\\
& & =
\left[-2C_d,2C_d\right]^{d-1} 
\cap \bigcup_{(x,\alpha)\in \Psi(\lambda \widetilde\mu^{s}_g)} 
\Pi_{\rm vert}(\scap(x,\alpha)) \nonumber \\
& & \subset  \left[-2C_d,2C_d\right]^{d-1} \cap 
\bigcup_{(x,\alpha)\in \Psi(\lambda\widetilde\mu^{s}_g)} 
B^{d-1}(\Pi_{\rm vert}(x),\alpha) \nonumber \\
& & =\left[-2C_d,2C_d\right]^{d-1} \cap 
\bigcup_{(y,\alpha)\in \varphi(\Psi(\lambda\widetilde\mu^{s}_g))} 
B^{d-1}(y,\alpha), \nonumber
\end{eqnarray}
where we used \eqref{eqn:capballinclusion} in the set-inequality.
Furthermore, by  definition of $\mu_\varphi^E$ we can 
couple $\Psi(\lambda\widetilde\mu^{s}_g)$ and $\Psi(\lambda \mu_\varphi^E)$ 
such that 
$\Psi(\lambda \mu_\varphi^E)=\varphi(\Psi(\lambda\widetilde\mu^{s}_g))$ almost surely.
We therefore conclude from \eqref{eqn:proj1} that 
\begin{eqnarray*}
\lefteqn{ \left[-2C_d,2C_d\right]^{d-1} \cap \Pi_{\rm vert}\left(\CC(\lambda \widetilde\mu^{s}_g)\right)}\\
& & \subset  \left[-2C_d,2C_d\right]^{d-1} \cap 
\bigcup_{(y,\alpha)\in \varphi(\Psi(\lambda\widetilde\mu^{s}_g))} 
B^{d-1}(y,\alpha) \\
& & =\left[-2C_d,2C_d\right]^{d-1} \cap 
\bigcup_{(y,\alpha)\in \Psi(\lambda \mu_\varphi^E)} 
B^{d-1}(y,\alpha) \\
& & \subset  \left[-2C_d,2C_d\right]^{d-1} \cap \bigcup_{(y,\alpha)\in \Psi(2 \lambda \widetilde\mu^E_g)} 
B^{d-1}(y,\alpha) \\
& & =\left[-2C_d,2C_d\right]^{d-1} \cap \CC(2 \lambda \widetilde\mu^E_g)
\end{eqnarray*}
with probability one, where we used \eqref{eqn:Phiinclusion} in the penultimate step.
\end{proof}

Finally, we are in a position to prove Theorem \ref{thm:disconnect2}.
\begin{proof}[Proof of Theorem \ref{thm:disconnect2}]
First we note that $\CC(\lambda \widetilde\mu^s_{g})$ is an open set since the 
caps used to define it are open. Therefore, connectness
is the same as path-connectness.

Assume first that $\CC(\lambda \widetilde\mu^s_{g})$ is connected. As in
Section \ref{sec:Notationfracball}, it is an easy consequence of 
\eqref{eqn:mucsdef} and the condition (stated below 
\eqref{eqn:mucsdef}) that 
$g(\alpha)\geq C' \alpha^{-d}$ for some $C'>0,$ 
that any point $x\in \BS^{d-1}$ is such that 
$\BP(x\in \CC(\lambda \widetilde\mu^s_{g}))=1.$ Therefore, connectedness of 
$\CC(\lambda \widetilde\mu^s_{g})$ implies the existence of a path 
$\omega\subset \CC(\lambda \widetilde\mu^s_{g})\subset \BS^{d-1}$ such that 
$\omega$ crosses the inverse projection of the annulus
$\CA(\frac{1}{4d},\frac{1}{2d}).$
That is, 
\[
\omega\subset \Pi_{\rm vert}^{-1}
\left(\CA\left(\frac{1}{4d},\frac{1}{2d}\right)\right)
\cap \CC(\lambda \widetilde\mu^s_{g}).
\]
Hence, it holds that 
$\Pi_{\rm vert}\left(\omega\right)$ crosses 
$\CA\left(\frac{1}{4d},\frac{1}{2d}\right)$ and
by using Proposition \ref{prop:inclusion} we see that 
there exists a coupling of $\Psi(\lambda \widetilde\mu^s_{g})$ and 
$\Psi(2 \lambda \widetilde\mu^E_g)$ such that 
$\Pi_{\rm vert}\left(\omega\right) 
\subset \CC(2 \lambda \widetilde\mu^E_g)$ almost surely. This follows since 
$\frac{1}{2d}\leq \frac{1}{2\sqrt{2(d-1)}}=C_d$ for every 
$d\geq 2.$ We conclude that the event 
\[
\Cr\left(\CA\left(\frac{1}{4d},\frac{1}{2d}\right),
\CC(2 \lambda \widetilde\mu^E_g)\right)
\] 
occurs, which, in turn, gives us that 
\begin{eqnarray*}
\lefteqn{\BP(\CC(\lambda \widetilde\mu^s_{g})\textrm{ is not connected})}\\
& & =1-\BP(\CC(\lambda \widetilde\mu^s_{g})\textrm{ is connected}) \\
& & \geq 1-\BP\left(\Cr\left(\CA\left(\frac{1}{4d},\frac{1}{2d}\right),
\CC(2 \lambda \widetilde\mu^E_g)\right)\right)>0,
\end{eqnarray*}
where we used Corollary \ref{corr:anncrossings} in the last inequality.
\end{proof}

%

\end{document}